\newcommand{\D}{\displaystyle}
\newcommand{\bx}{\mathbf{x}}
\newcommand{\bn}{\mathbf{n}}
\newcommand{\by}{\mathbf{y}}
\newcommand{\p}{\partial}
\newcommand{\al}{\alpha}
\newcommand{\bga}{\mbox{\boldmath$\gamma$}}
\newcommand{\hk}{R_t(\bx, \by)}
\newcommand{\rhk}{\bar{R}_t(\bx, \by)}
\newcommand{\rrhk}{\bar{\bar{R}}_t(\bx, \by)}
\newcommand{\hkxpj}{R_t(\bx, {\bf p}_j)}
\newcommand{\rhkxpj}{\bar{R}_t(\bx, {\bf p}_j)}
\newcommand{\bfu}{{\bf u}}
\newcommand{\bfp}{{\bf p}}
\newcommand{\bz}{\mathbf{z}}
\newcommand{\M}{{\mathcal M}}
\newcommand{\bV}{\mathbf{V}}
\newcommand{\bA}{\mathbf{A}}
\newcommand{\lm}{\mathcal{M}}
\newcommand{\mathd}{\mathrm{d}}
\newtheorem{theorem}{\textbf{Theorem}}[section]
\newtheorem{lemma}{\textbf{Lemma}}[section]
\newtheorem{remark}{\textbf{Remark}}[section]
\newtheorem{proposition}{\textbf{Proposition}}[section]
\newcommand{\R}{\mathbb{R}}
\numberwithin{equation}{section}
\begin{document}

\title{Convergence of Laplacian Spectra from Point Clouds}

\author{
Zuoqiang Shi%
\thanks{Yau Mathematical Sciences Center, Tsinghua University, Beijing, China,
100084. \textit{Email: zqshi@math.tsinghua.edu.cn.}%
}
\and Jian Sun %
\thanks{Yau Mathematical Sciences Center, Tsinghua University, Beijing, China,
100084. \textit{Email: jsun@math.tsinghua.edu.cn.}%
}
}

 \maketitle

\begin{abstract}
The spectral structure of the Laplacian-Beltrami operator (LBO) on manifolds has been widely used in
many applications, include spectral clustering, dimensionality reduction,
mesh smoothing, compression and editing, shape segmentation, matching and parameterization, and so on.
Typically, the underlying Riemannian manifold is unknown and often given by a set of sample points.
The spectral structure of the LBO is estimated from some discrete Laplace operator
constructed from this set of sample points. In our previous papers \cite{LSS,SS14}, we proposed the
point integral method to discretize the LBO from point clouds, which is also capable to solve
the eigenproblem. Then one fundmental issue is the convergence of the eigensystem of the discrete Laplacian
to that of the LBO. In this paper, for compact manifolds isometrically embedded in
Euclidean spaces possibly with boundary,
we show that the eigenvalues and the eigenvectors obtained by the point integral method
converges to the eigenvalues and the eigenfunctions of the LBO with the Neumann boundary,
and in addition, we give an estimate of the convergence rate.
This result provides a solid mathematical foundation for the point integral method
in the computation of Laplacian spectra from point clouds.
\end{abstract}
\vspace{5mm}

\hspace{4mm}\textbf{keywords:} Laplacian spectra; point integral method; point cloud; convergence rate.


\section{Introduction}
The Laplace-Beltrami operator (LBO) is a fundamental object associated to Riemannian manifolds,
which encodes all intrinsic geometry of the manifolds and has many desirable properties. It is also
related to diffusion and heat equation on the manifold, and is connected to a large body of classical
mathematics (see, e.g.,~\cite{rosenberg1997lrm}).
In recent years, the Laplace-Beltrami operator has attracted much attention in many applied fields.
The spectral structure of the Laplacian-Beltrami operator on manifold has been widely used in
many applications, include spectral clustering, dimension reduction, mesh smoothing, compression,
editing, shape segmentation, matching, parameterization, and so on \cite{belkin2003led, Coifman05geometricdiffusions,OvsjanikovSG08, dgw-spec}.

In this paper, we consider the following eigenproblem on
a smooth, compact $k$-dimensional submanifold $\mathcal{M}$
with the Neumann boundary condition
\begin{equation}
\left\{\begin{array}{rl}
     \Delta_{\mathcal{M}} u(\bx)= \lambda u(x) ,&\bx\in  \mathcal{M} \\
      \frac{\p u}{\p \bn}(\bx)=0,& \bx\in \p \mathcal{M}
\end{array}
\right.
\label{eqn:eigen_neumann}
\end{equation}
where $\Delta_\mathcal{M}$ is the Laplace-Beltrami operator on $\mathcal{M}$.
Let $g$ be the Riemannian metric tensor of $\mathcal{M}$, which is assumed to
be inherited from the ambient space $\R^d$, that is,
$\M$ isometrically embedded in $\R^d$ with the standard Euclidean metric.
If $ \mathcal{M}$ is an open set in $\R^d$, then $\Delta_\mathcal{M}$ becomes
standard Laplace operator, i.e., $\Delta_{ \mathcal{M}} = \sum_{i=1}^d \frac{\p^2 }{\p {x^i}^2}$.

In~\cite{LSS}, we have proposed a numerical method called the point integral method (PIM) to solve the above
eigenproblem on manifolds. In the PIM, we only need a point cloud to discretize the manifold $\M$.
In particular,  we are given a set of points $P=(\bfp_1, \cdots, \bfp_n)$ sampling $\M$.
In addition, we are given a vector $\bV = (V_1, \cdots, V_n)^t$ where $V_i$ is an volume
weight of $\bfp_i$ in $\M$, so that for any Lipschitz
function $f$ on $\M$, $\int_\M f(\bx) d\mu_\bx$ can be approximated by
$\sum_{i=1}^n f(\bfp_i) V_i$. Here $d\mu_\bx$ is the volume form of $\M$.

We consider the following discrete Laplace operator $L_{t, h}$
\begin{equation}
L_{t,h} u(\bfp_i) = \frac{C_t}{t}\sum_{\bfp_j \in P}R\left(\frac{|\bfp_i -\bfp_j|^2}{4t}\right)(u(\bfp_i) - u(\bfp_j))V_j .
\label{eqn:L_th}
\end{equation}
where $R: \R^+ \rightarrow \R^+ $ is a given kernel function and $C_t=\frac{1}{4\pi t}^{k/2}$ is a normalized constant.
If set $V_j= \frac{1}{n}$, the discrete Laplace operator $L_{t, h}$
becomes the well-known weighted graph Laplacian~\cite{belkin2001}.
Denote $\bar{R}$ the primitive function of $-R$, i.e. $\frac{d}{dr}\bar{R}(r)=-R(r)$.
Now we consider the following generalized eigenproblem of $L_{t, h}$
\begin{equation}
-L_{t,h} u(\bfp_i) = \lambda \sum_{p_j \in P} C_t\bar{R}\left(\frac{|\bfp_i -\bfp_j|^2}{4t}\right) u(\bfp_j)V_j
\label{eqn:eigen_dis}
\end{equation}

The purpose of the paper is to show the generalized eigenproblem~\eqref{eqn:eigen_dis}
converges to the eigenproblem~\eqref{eqn:eigen_neumann} at a rate $(t^{1/2} + \frac{h}{t^{k/4+3}})$ for
the eigenvalues and $(t^{1/2} + \frac{h}{t^{k/4+2}})$ for the eigenfunctions, provided the input
data $(P,\bV)$ is an $h$-integral approximation of $\M$ (see Section~\ref{sec:note} for its definition).
Note the rate reported in this paper depends on the dimension $k$ of $\M$ and may not be optimal.

Following~\cite{LSS,SS14}, we bridge the LBO $\Delta_\M$ and the discrete Lapalce operator $L_{t, h}$ using
the following integral Laplace operator
\begin{equation}
  L_tu(\bx)=\frac{C_t}{t}\int_\M R\left(\frac{|\bx -\by|^2}{4t}\right)(u(\bx)-u(\by))\mathd\mu_\by.
\end{equation}
We consider the solution operators associated with $\Delta_\M$, $L_t$ and $L_{t, h}$, and show
all solution operators are in fact compact and additionally the approximation errors
of the solution operators in certain operator norms are bounded (see Theorem~\ref{thm:converge_h1} and \ref{thm:converge_c1}).
This enables us to apply the results from spectral theory to obtain a convergence rate.
Note that it is critical and also common in the numerical analysis to
consider the solution operators~\cite{Boffi10}, instead of the Laplacians themselves
which are not even bounded. Comparing to the result in \cite{SS14},  we improve the error estimations
for both the trancation error and the stability of $L_t$. In particular,
in both Theorem~\ref{thm:integral_error} and~\ref{thm:regularity_boundary},
we bound the approximation errors using the Sobolev norms, instead of the infinite norm
as in~\cite{SS14}.

\subsection{Related work}
The finite element method is popular in solving PDEs on manifolds. Dziuk~\cite{Dziuk88} showed the FEM
converges quadratically in $L^2$ norm and linearly in $H^1$ norm in solving the Poisson equations.
The eigensystem of the LBO computed by the FEM converges linearly~\cite{Strang73, Dodziuk76, Wardetzky06}.
The FEM requires a mesh tesselating the domain and its performace depends heavily
on the quality of meshes. However, it is not an easy task to generate a mesh of
good quality, especially for an irregular domain~\cite{cheng12}, and becomes much more
difficult for a curved submanifold~\cite{BoissonnatDG12}.

We see that the discrete Laplace operator $L_{t, h}$ becomes the weighted graph Laplacian when
the volume weight vector is constant. In the presence of no boundary, Belkin and Niyogi~\cite{CLEM_08}
showed that the spectra of the weighted graph Laplacian converges to
the spectra of $\Delta_\M$. When there is a boundary,  it was
observed in~\cite{Lafon04diffusion, BelkinQWZ12} that
the integral Laplace operator $L_t$ is dominated by the first order derivative and
thus fails to be true Laplacian near the boundary.
Recently, Singer and Wu~\cite{Singer13} showed the spectral
convergence in the presence of the Neumann boundary. In the previous approaches,
the convergence analysis is based on the connection between the weighted graph Laplacian and
the heat operator, and thus it is essential to use the Gaussian kernel in those approaches.
The convergence analysis in this paper is very different from the previous ones.
We consider this convergence problem from the
point of view of solving the Poisson equation on submanifolds, which opens up many tools in the
numerical analysis for studying the graph Laplacian.



The rest of the paper is organized as follows. In Section~\ref{sec:assumption}, we describe the basic assumptions and define
the solutions operators we are working on. The main results are stated in Section~\ref{sec:results}. In Section~\ref{sec:intermediate},
the proofs of the main results are given. The proofs depends on two theorems:
one states that $T_t$ converges to $T$ in $H^1$ norm and the other states that
$T_{t,h}$ converges to $T_t$ in $C^1$ norm, whose proofs are given in Section~\ref{sec:converge-h1} and~\ref{sec:converge-c1} respectively.
Finally, in Section~\ref{sec:conclusion}, we conclude and point out a few direction for future research.

\section{Assumptions and Notations}
\label{sec:assumption}

We follow the assumptions in~\cite{SS14} and use basically the same notations.
\subsection{Assumptions}
\label{sec:assum}
First we assume the function $R: \R^+ \rightarrow \R^+ $ is $C^2$  and satisfies the following conditions:
\begin{itemize}
\item[(i)]
$R(r) = 0$ for $\forall r >1$.
\item[(ii)]
There exists a constant $\delta_0$ so that $R(r)>\delta_0$ for $\forall r<\frac{1}{2}$.
\end{itemize}

Second, we assume both $\M$ and $\p \M$ are compact and $C^{\infty}$ smooth. Consequently,
it is well known that both $\M$ and $\p \M$ have positive reaches.

Finally, we assume the input data $(P,\bV)$ is \textbf{$\mathbf{h}$-integral approximation} of $\M$,
i.e.,
\begin{enumerate}
\item[] For any function $f\in C^1(\M)$, there is a constant $C$
independent of $h$ and $f$ so that
\begin{equation}
\left|\int_\M f(\by) d\mu_{\by} - \sum_{i=1}^n f(\bfp_i)V_i\right| < Ch|\text{supp}(f)|\|f\|_{C^1}, \text{~~and}
\end{equation}
\end{enumerate}
where $\|f\|_{C^1} = \|f\|_\infty +\|\nabla f\|_\infty$ and $|X|$ denotes the volume of $X$.
\begin{remark}
  The quadrature rule we considered in this paper is in a deterministic way. We assume that the sample points $P$
  converge to the underlying manifold $\M$ in the sense that $h\rightarrow 0$. Actually, the methods used in this paper 
  also apply on random samples and the results will be reported in the near future. 
\end{remark}



\subsection{Notations}
\label{sec:note}
To investigate the convergence of the problem~\ref{eqn:eigen_dis} to the problem~\ref{eqn:eigen_neumann}
of the Neumann boundary, we consider the solution operators $T, T_t$ and $T_{t,h}$ as follows.
Denote the operator $T: L^2(\mathcal{M})\rightarrow H^2(\mathcal{M})$ to be the solution operator of
the following problem
\begin{eqnarray}
\label{eqn:neumann}
  \left\{\begin{array}{rl}
      \Delta_\M u(\bx)=f(\bx),&\bx\in \mathcal{M},\\
      \frac{\p u}{\p \bn}(\bx)=0,& \bx\in \p \mathcal{M}.
\end{array}\right.
\end{eqnarray}
where $\bn$ is the out normal vector of $\mathcal{M}$.
Namely, for any $f\in L^2(\mathcal{M})$, $u = T(f)$ with $\int_\M u = 0$ solves the problem \eqref{eqn:neumann}.

Denote $T_t:L^2(\mathcal{M})\rightarrow L^2(\mathcal{M})$ to be the solution operator of the following
problem
\begin{eqnarray}
\label{eqn:integral}
-\frac{1}{t}\int_\mathcal{M} R_t(\bx, \by)(u(\bx)-u(\by)) \mathd \by=\int_{\mathcal{M}} \bar{R}_t(\bx, \by) f(\by) \mathd \by.\quad
\end{eqnarray}
where $R_t(\bx,\by)=C_tR\left(\frac{|\bx -\by|^2}{4t}\right)$ and $\bar{R}_t(\bx,\by)=C_t\bar{R}\left(\frac{|\bx -\by|^2}{4t}\right)$, $\bar{R}(r)=\int_r^{+\infty}R(s)\mathd s$.
Namely,  $u = T_t(f)$ with $\int_\M u = 0$ solves the problem \eqref{eqn:integral};

The last solution operator is $T_{t,h}:C(\mathcal{M})\rightarrow C(\mathcal{M})$ which defined as follows.
\begin{eqnarray}
  T_{t,h}(f)(\bx)=\frac{1}{w_{t,h}(\bx)}\sum_{j}R_t(\bx,\bx_j)u_jV_j-\frac{t}{w_{t,h}(\bx)}\sum_{j}\bar{R}_t(\bx,\bx_j)f(\bx_j)V_j
\end{eqnarray}
where $\D w_{t,h}(\bx)=\sum_{\bx_j\in P}R_t(\bx,\bx_j)V_j$ and $\bfu=(u_1, \cdots, u_n)^t$ with $\D\sum_{i=1}^{n} u_i V_i =0$ solves
the following linear system
\begin{eqnarray}
\label{eqn:dis}
 -\frac{1}{t}\sum_{\bx_j\in P}R_t(\bx_i,\bx_j)(u_i-u_j)V_j=\sum_{\bx_j\in P}\bar{R}_t(\bx_i,\bx_j)f(\bx_j)V_j,\quad \bx_i\in P
\end{eqnarray}
One direct consequence is that $T, T_t$ and $T_{t,h}$ have following properties.
\begin{proposition}
\label{prop:eigen}
For any $t>0$, $h>0$,
  \begin{itemize}
  \item[1. ] $T, T_t$ are compact operators on $H^1(\M)$ into $H^1(\M)$; $T_t, T_{t,h}$ are compact operators on $C^1(\M)$ into $C^1(\M)$.
\item[2. ] All eigenvalues of $T, T_t, T_{t,h}$ are real numbers. All generalized eigenvectors of $T, T_t, T_{t,h}$ are eigenvectors.
  \end{itemize}
\end{proposition}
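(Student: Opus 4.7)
The plan is to handle the compactness and the spectral assertions separately. The unifying observation is that each of $T$, $T_t$, $T_{t,h}$ has the form $A^{-1}B$, where $A$ and $B$ are symmetric on the appropriate space and $A$ is positive definite on the codimension-one subspace of mean-zero functions. This simultaneously explains the smoothing that yields compactness and the self-adjointness that forces a real, semisimple spectrum.

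\emph{Compactness.} For $T$, standard elliptic regularity for the Neumann problem on the smooth compact $\M$ gives $T\colon L^2(\M)\to H^2(\M)$ bounded on mean-zero functions, and Rellich--Kondrachov furnishes the compact embedding $H^2(\M)\hookrightarrow H^1(\M)$. For $T_t$, I would rewrite~\eqref{eqn:integral} in the fixed-point form
$$u(\bx)=\frac{1}{w_t(\bx)}\left[\int_\M R_t(\bx,\by)\,u(\by)\,\mathd\mu_\by-t\int_\M \bar{R}_t(\bx,\by)\,f(\by)\,\mathd\mu_\by\right],\quad w_t(\bx):=\int_\M R_t(\bx,\by)\,\mathd\mu_\by,$$
where $w_t$ is uniformly bounded below on $\M$ by assumption (ii) together with compactness of $\M$. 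Since $R_t$ and $\bar{R}_t$ are $C^\infty$ in $\bx$ for fixed $t>0$, repeated differentiation under the integral sign combined with the $L^2$-stability $\|T_t f\|_{L^2}\le C_t\|f\|_{L^2}$ (proved separately in Section~\ref{sec:converge-h1}) bootstraps to $\|T_t f\|_{C^k(\M)}\le C_{k,t}\|f\|_{L^2(\M)}$ for every $k$. Hence $T_t$ maps bounded subsets of $H^1(\M)$ and of $C^1(\M)$ into bounded subsets of $C^2(\M)$, which are precompact in $H^1(\M)$ and $C^1(\M)$ by Rellich--Kondrachov and Arzel\`a--Ascoli respectively. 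For $T_{t,h}$, the defining formula presents $T_{t,h}(f)$ as an element of the finite-dimensional subspace spanned by $\{R_t(\cdot,\bfp_j),\bar{R}_t(\cdot,\bfp_j)\}_{j=1}^n\subset C^\infty(\M)$ with coefficients depending linearly and continuously on the sample vector $(f(\bfp_1),\ldots,f(\bfp_n))$, so $T_{t,h}$ has finite rank on $C^1(\M)$ and is therefore compact.

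\emph{Spectral structure.} A symmetrization of the kernel shows
$$\langle -L_t u,v\rangle_{L^2}=\frac{1}{2t}\iint_{\M\times\M} R_t(\bx,\by)\bigl(u(\bx)-u(\by)\bigr)\bigl(v(\bx)-v(\by)\bigr)\,\mathd\mu_\bx\,\mathd\mu_\by,$$
so $-L_t$ is symmetric and its quadratic form is positive definite on the mean-zero subspace, vanishing only on constants by connectedness of $\M$ and assumption (ii). The operator $\bar{L}_t f(\bx):=\int_\M \bar{R}_t(\bx,\by)f(\by)\,\mathd\mu_\by$ is symmetric as $\bar{R}_t$ is a symmetric kernel. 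Writing $T_t=(-L_t)^{-1}\bar{L}_t$ on the mean-zero subspace, self-adjointness of $-L_t$ and $\bar{L}_t$ gives
$$\langle T_t f,g\rangle_{-L_t}:=\langle -L_t T_t f,g\rangle_{L^2}=\langle \bar{L}_t f,g\rangle_{L^2}=\langle f,\bar{L}_t g\rangle_{L^2}=\langle f,T_t g\rangle_{-L_t},$$
so $T_t$ is self-adjoint for the inner product $\langle\cdot,\cdot\rangle_{-L_t}$. Combined with compactness, the spectral theorem for compact self-adjoint operators then delivers real eigenvalues and semisimplicity, so every generalized eigenvector is an eigenvector. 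The same reasoning applies to $T$ via Green's identity (with $\bar{L}_t$ replaced by the identity) and to $T_{t,h}$, where $L_{t,h}$ and $\bar{L}_{t,h}$ reduce to symmetric matrices in the weighted inner product $\langle u,v\rangle_W:=\sum_i u_i v_i V_i$ on $\R^n$; the finite-dimensional spectral structure on $\R^n$ then transports back to $C^1(\M)$ via the explicit formula for $T_{t,h}$.

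The main technical obstacle is the $L^2$-stability of $T_t$ used silently in the bootstrapping; this is a nontrivial estimate and is one of those established in Section~\ref{sec:converge-h1}. A secondary subtlety is verifying that $-L_t$ is strictly positive on the mean-zero subspace so that $\langle\cdot,\cdot\rangle_{-L_t}$ is a genuine inner product: this uses assumption (ii) to ensure $R_t(\bx,\by)>0$ on a set large enough that the vanishing of the quadratic form forces $u$ to be constant, which then lies outside the mean-zero subspace.
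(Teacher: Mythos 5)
Your compactness argument mirrors the paper's: elliptic regularity plus Rellich for $T$, the fixed-point representation and smoothing of the kernel for $T_t$, and finite rank for $T_{t,h}$. One overstatement to flag: the standing hypothesis is only $R\in C^2$, not $C^\infty$, so the bootstrap delivers $T_t f\in C^2$ and no more (which is exactly what the paper claims and is already enough to land in a compact subset of $H^1$ and $C^1$); your claim of $C^k$ bounds ``for every $k$'' is not available.

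For the spectral structure you take a genuinely different and cleaner route than the paper. The paper proves realness by a direct ratio formula $\lambda=\langle\bar L_t u,u\rangle/\langle L_t u,u\rangle$ and then proves semisimplicity by hand: taking $v=(T_t-\lambda)^{m-1}u$, $w=(T_t-\lambda)^{m-2}u$, applying $L_t$, and using its symmetry and coercivity (Lemma~\ref{lem:elliptic_L_t}) to force $\langle L_t v,v\rangle\ge C\|v\|_{L^2}^2=0$. You instead package both conclusions at once by observing that $T_t=(-L_t)^{-1}\bar L_t$ is self-adjoint in the inner product $\langle u,v\rangle_{-L_t}=\langle -L_t u,v\rangle_{L^2}$ on the mean-zero subspace, where $-L_t$ is coercive, and then invoking the spectral theorem for compact self-adjoint operators. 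This is the same underlying mechanism (symmetry of $L_t$, $\bar L_t$, and coercivity on mean-zero functions) but organized so that realness and semisimplicity fall out simultaneously rather than being proved separately. One point worth making explicit: the spectral theorem is being applied on the completion of the mean-zero subspace under $\|\cdot\|_{-L_t}$, which by Lemma~\ref{lem:elliptic_L_t} is equivalent to the mean-zero $L^2$ space; the conclusions then transport to $H^1$ and $C^1$ because any eigenfunction with nonzero eigenvalue lies in $C^2$ by the fixed-point representation. Your treatment of $T_{t,h}$ via the weighted Euclidean inner product $\langle u,v\rangle_{\bV}=\sum_i u_iv_iV_i$ is the correct finite-dimensional analogue, matching the paper's implicit use of Theorem~\ref{thm:elliptic_L}; note though that for $T_{t,h}$ to be well defined at all one needs the discrete system~\eqref{eqn:dis} to be solvable on the mean-zero subspace, which is only guaranteed for $h/\sqrt{t}$ small — a restriction the proposition's ``for any $t>0$, $h>0$'' hides, and one both you and the paper pass over silently.
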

\begin{proof}
  The proof of (1) is straightforward. First, it is well known that $T$ is compact operator. $T_{t,h}$ is actually finite dimensional operator, so
it is also compact. To show the compactness of $T_t$, we need the following formula,
\begin{eqnarray}
  T_t u = \frac{1}{w_t(\bx)}\int_\M R_t(\bx,\by)T_tu(\by)\mathd \by+ \frac{t}{w_t(\bx)}\int_\M \bar{R}_t(\bx,\by)u(\by)\mathd \by, \quad \forall u\in H^1(\M),
\end{eqnarray}
Using the assumption that $R\in C^2$, direct calculation would gives that
 that $T_tu\in C^2$. This would imply the compactness of $T_t$ both in $H^1$ and $C^1$.

For the operator $T$, the conclusion (2) is well known. The proof of $T_t$ and $T_{t,h}$ are very similar, so here we only present the
proof for $T_t$.

Let $\lambda$ be an eigenvalue of $T_t$ and $u$ is corresponding eigenfunction, then
\begin{eqnarray*}
  L_t T_t u =\lambda L_t u
\end{eqnarray*}
which implies that
\begin{eqnarray*}
  \lambda = \frac{\int_M \int _\M \bar{R}_t(\bx, \by) u^*(\bx)u(\by) \mathd \bx\mathd \by}{\int_\M u^*(\bx)(L_t u)(\bx)\mathd \bx }
\end{eqnarray*}
where $u^*$ is the complex conjugate of $u$.

Using the symmetry of $L_t$ and $\bar{R}(\bx,\by)$, it is easy to show that $\lambda\in \mathbb{R}$.

Let $u$ be a generalized eigenfunction of $T_t$ with multiplicity $m>1$ associate with eigenvalue $\lambda$. Let $v=(T_t-\lambda)^{m-1}u$,
$w=(T_t-\lambda)^{m-2}u$, then $v$
is an eigenfunction of $T_t$ and
\begin{eqnarray*}
  T_t v =\lambda v,\quad (T_t-\lambda)w=v
\end{eqnarray*}
By applying $L_t$ on both sides of above two equations, we have
\begin{eqnarray*}
  \lambda L_t v &=& L_t (T_tv) = \int_\M \bar{R}_t(\bx,\by)v(\by)\mathd \by,\\
L_t v &=& L_t(T_tw)-\lambda L_t w = \int_\M \bar{R}_t(\bx,\by)w(\by)\mathd \by-\lambda L_t w
\end{eqnarray*}
Using above two equations and the fact that $L_t$ is symmetric, we get
\begin{eqnarray*}
  0&=& \left<w,\lambda L_t v - \int_\M \bar{R}_t(\bx,\by)v(\by)\mathd \by\right>_\M\nonumber\\
&=& \left<\lambda L_t w - \int_\M \bar{R}_t(\bx,\by)w(\by)\mathd \by, v\right>_\M\nonumber\\
&=& \left<L_t v, v\right>_\M\ge C\left\|v\right\|_2^2
\end{eqnarray*}
which implies that $(T_t-\lambda)^{m-1}u=v=0$. This proves that $u$ is a generalized eigenfunction of $T_t$ with multiplicity $m-1$.
Repeating above argument, we can show that $u$ is actually an eigenfunction of $T_t$.
\end{proof}

Using the definition of $T, T_t$ and $T_{t,h}$, it is easy to show that the eigen problems $T u =\lambda u$,
$T_{t,h}(u) = \lambda  u$ is
equivalent to the eigen problems \eqref{eqn:eigen_neumann} and \eqref{eqn:eigen_dis} respectively.
Namely their eigenvalues are reciprocal to each other and they share the same eigenspaces.
\begin{proposition}
Let $\theta(u)$ denote the
restriction of $u$ to the sample points $P$, i.e., $\theta(u) = (u(\bfp_1), \cdots, u(\bfp_n))^t$.
\begin{itemize}
\item[1. ] If a function $u$ is an eigenfunction of $T_{t,h}$
with the eigenvalue $\lambda$, then the vector $\theta(u)$ is an eigenvector of the eigenproblem~\eqref{eqn:eigen_dis}
with eigenvalue $1/\lambda$.
\item[2. ] If a vector $\mathbf{u}$ is an eigenvector of the eigenproblem~\eqref{eqn:eigen_dis}
with the eigenvalue $\lambda \neq 0 $, then $I_{\lambda}(\mathbf{u})$ is an
eigenfunction of $T_{t,h}$ with eigenvalue $1/\lambda$, where
\begin{equation*}
I_{\lambda}(\bfu) (\bx) = \frac{ \sum_{p_j \in P} \hkxpj u_j V_j  - \lambda t \sum_{p_j \in P} \rhkxpj u_j V_j} {\sum_{p_j \in P} \hkxpj V_j }.
\end{equation*}
\item[3. ]
A function $u$ is the eigenfunction of the eigenproblem~\eqref{eqn:eigen_neumann}
with the eigenvalue $\lambda \neq 0$
if and only if the function $u$ is an eigenfunction of $T $
with the eigenvalue $1 / \lambda$.
\end{itemize}
\label{prop:eigen_intergral_dis}
\end{proposition}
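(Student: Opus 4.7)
The plan is to prove the three parts in turn, with the crux being a computational identity: evaluating $T_{t,h}(f)$ at a sample point $\bfp_i$ returns exactly $u_i$, where $\bfu$ is the vector produced by solving the discrete linear system \eqref{eqn:dis} with right-hand side determined by $f$. To derive this identity, I substitute the discrete equation
\[
\sum_j R_t(\bfp_i,\bfp_j) u_j V_j \;=\; u_i\,w_{t,h}(\bfp_i) + t\sum_j \bar R_t(\bfp_i,\bfp_j) f(\bfp_j) V_j
\]
into the definition of $T_{t,h}(f)(\bfp_i)$; the $\bar R_t$ term cancels and leaves $u_i$. This identity is the backbone of parts 1 and 2.

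For part 1, I start from $T_{t,h}(u)=\lambda u$ as functions on $\M$, restrict to $\bfp_i$, and use the identity: if $\bfv$ is the solution of \eqref{eqn:dis} with data $u$, then $v_i = \lambda\, u(\bfp_i)$. Plugging this into the defining system for $\bfv$ and dividing by $\lambda$ yields exactly \eqref{eqn:eigen_dis} for $\theta(u)$ with eigenvalue $1/\lambda$. For part 2, I first check that $I_\lambda(\bfu)(\bfp_i)=u_i$ using the eigenvector equation \eqref{eqn:eigen_dis} to rewrite $\sum_j R_t(\bfp_i,\bfp_j) u_j V_j$ as $u_i w_{t,h}(\bfp_i)+\lambda t \sum_j \bar R_t(\bfp_i,\bfp_j) u_j V_j$; the $\lambda t$ terms cancel. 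Then I apply $T_{t,h}$ to $f:=I_\lambda(\bfu)$: comparing the discrete system for $\bfv$ with the eigenvector equation \eqref{eqn:eigen_dis} scaled by $1/\lambda$ shows $\bfv=\bfu/\lambda$ (the constraint $\sum_i v_i V_i=0$ is exactly $\sum_i u_i V_i=0$, which follows from summing \eqref{eqn:eigen_dis} against $V_i$: the antisymmetric $L_{t,h}$ term vanishes, forcing the weighted sum to be zero when $\lambda\neq 0$). Substituting $\bfv=\bfu/\lambda$ into the formula for $T_{t,h}(f)$ reproduces $I_\lambda(\bfu)(\bx)/\lambda$ verbatim.

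Part 3 is standard. If $u$ satisfies \eqref{eqn:eigen_neumann} with $\lambda\neq 0$, then integration by parts together with the Neumann condition forces $\int_\M u\,d\mu=\tfrac{1}{\lambda}\int_\M \Delta_\M u\,d\mu=0$, so the definition of $T$ yields $T(\lambda u)=u$, i.e.\ $Tu=u/\lambda$. Conversely, if $Tu=u/\lambda$, then by construction $\Delta_\M(u/\lambda)=u$ and $\tfrac{\partial}{\partial\bn}(u/\lambda)=0$, which is exactly \eqref{eqn:eigen_neumann}.

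The proof is essentially bookkeeping; the only non-routine point is tracking the solvability constraint $\sum_i v_i V_i=0$ in part 2, but as noted above this is automatic for $\lambda\neq 0$ from the symmetry of the kernel $R_t$. No analytic estimate is required here; everything is algebraic manipulation of the defining identities.
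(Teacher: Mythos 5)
Your key identity $T_{t,h}(f)(\bfp_i)=v_i$ (where $\bfv$ is the solution of \eqref{eqn:dis} for data $f$), the derivation of part~1 from it, part~3, and the algebra in part~2 verifying $I_\lambda(\bfu)(\bfp_i)=u_i$ are all correct, and this is the natural approach to a result the paper states without proof. However, the step where you dispose of the normalization constraint $\sum_i v_iV_i=0$ in part~2 does not hold as written. Summing \eqref{eqn:eigen_dis} against $V_i$ does kill the $L_{t,h}$ side by the symmetry $R_t(\bfp_i,\bfp_j)=R_t(\bfp_j,\bfp_i)$, but the right-hand side becomes
\begin{equation*}
\lambda\sum_{j}u_jV_j\Big(\sum_{i}\bar{R}_t(\bfp_i,\bfp_j)V_i\Big)=0,
\end{equation*}
so with $\lambda\neq 0$ you only learn $\sum_j u_jV_j\,\omega_j=0$ for the non-constant weight $\omega_j=\sum_i\bar{R}_t(\bfp_i,\bfp_j)V_i$, not $\sum_j u_jV_j=0$.

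These two conditions coincide only if $\omega_j$ is constant in $j$, which is not assumed. If $\sum_j u_jV_j\neq 0$, then the representative $\bfv$ with $\sum_i v_iV_i=0$ required by the definition of $T_{t,h}$ is $\bfu/\lambda-c\mathbf{1}$ with $c=\big(\lambda\sum_iV_i\big)^{-1}\sum_iu_iV_i\neq 0$, and substituting this back gives $T_{t,h}(I_\lambda(\bfu))=\tfrac{1}{\lambda}I_\lambda(\bfu)-c$, which is not the eigenvalue equation. You need to either impose $\sum_iu_iV_i=0$ as a hypothesis (or normalization) on the eigenvector, or correct $I_\lambda(\bfu)$ by the appropriate additive constant; the assertion that the constraint is ``automatic from the symmetry of $R_t$'' is the one place where the argument genuinely fails.
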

Using the above proposition, we only need to prove the eigenvalues and the eigenfunctions 
of $T_{t,h}$ converge to the eigenvalues and the eigenfunctions of $T$.

\section{Main Results}
\label{sec:results}
Let $X$ be a complex Banach space and $L:X\rightarrow X$ be a compact linear operator. The resolvent set $\rho(L)$ is given by the complex numbers $z\in \mathbb{C}$
such that $z-L$ is bijective. The spectrum of $L$ is $\sigma(L)=\mathbb{C}\backslash\rho(L)$. It is well known that $\sigma(L)$ is a countable set with
no limit points other than zero. All non-zero value s in $\sigma(L)$ are eigenvalues.
If $\lambda$ is a nonzero eigenvalue of $L$, the ascent multiplicity $\al$ of $\lambda-L$ is the smallest integer such that $\ker(\lambda-L)^\al=\ker(\lambda-L)^{\al+1}$.

Given a closed smooth curve $\Gamma\subset \rho(L)$ which encloses the eigenvalue $\lambda$ and no other elements of $\sigma(L)$, the Riesz spectral projection associated with
$\lambda$ is defined by
\begin{eqnarray}
  E(\lambda, L)=\frac{1}{2\pi i}\int_{\Gamma} (z-L)^{-1}\mathd z,
\end{eqnarray}
where $i=\sqrt{-1}$ is the unit imaginary. The definition does not depend on the chosen of $\Gamma$.
It is well known that $E(\lambda,L):X\rightarrow X$ has following properties:
\begin{itemize}
\item[1. ] $E(\lambda,L)\circ E(\lambda,L)=E(\lambda,L)$,\quad $L\circ E(\lambda,L)=E(\lambda,L)\circ L$, $E(\lambda,L)\circ E(\mu,L)=0,\;\text{if}\; \lambda\ne \mu$.
\item[2. ] $E(\lambda,L)X=\ker (\lambda-L)^\al$, where $\al$ is the ascent multiplicity of $\lambda-L$.
\item[3. ] If $\Gamma\subset \rho(L)$ encloses more eigenvalues $\lambda_1,\cdots,\lambda_n$, then
$$\D E(\lambda_1,\cdots,\lambda_n,L)X=\oplus_{i=1}^n\ker (\lambda_1-L)^{\al_i}$$
 where $\al_i$ is the ascent multiplicity of $\lambda_i-L$.
\end{itemize}
The properties (2) and (3) are of fundamental importance for the study of eigenvector approximation.

It is well-known that the eigenvalues of both $T$ can be sorted as
\begin{eqnarray}
  \lambda_1\le \lambda_2\le \cdots,\le \lambda_n\le \cdots < 0,\nonumber
\end{eqnarray}
where the same eigenvalue are repeated accroding to its multiplicity.

Now, we are ready to state the main theorems.
\begin{theorem}
Assume the submanifold $\M$ and $\p \M$ are $C^\infty$ smooth and the input data $(P,\bV)$ is an $h$-integral approximation of $\M$,
Let $\lambda_i$ be the $i$th smallest eigenvalue of $T$ counting multiplicity, and $\lambda_{i}^{t,h}$ be the $i$th smallest eigenvalue
of $T_{t,h}$ counting multiplicity, then there exists a constant $C$ such that
$$|\lambda_i^{t,h} - \lambda_i|\le C  \left(t^{1/2}+\frac{h}{t^{k/4+3}}\right) , $$
and there exist another constant $C$ such that, for any $\phi\in E(\lambda_i,T)X$ and $X=H^1(\M)$,
$$\|\phi-E(\sigma_{i}^{t,h},T_{t,h})\phi\|_{H^1(\M)} \le C \left(t^{1/2}+\frac{h}{t^{k/4+2}}\right) .$$
where $\sigma_i^{t,h}=\{\lambda_j^{t,h}\in \sigma(T_{t,h}): j\in I_i\}$ and $I_i=\{j\in \mathbb{N}: \lambda_j=\lambda_i\}$.
\label{thm:eigen_neumann}
\end{theorem}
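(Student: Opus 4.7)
The plan is to combine the two convergence theorems via the triangle decomposition $T - T_{t,h} = (T - T_t) + (T_t - T_{t,h})$ and then apply the classical spectral perturbation theory for compact operators (as developed by Osborn and Chatelin). Theorem~\ref{thm:converge_h1} will supply $\|T - T_t\|$ of order $t^{1/2}$ in the $H^1$ operator norm, while Theorem~\ref{thm:converge_c1} will supply $\|T_t - T_{t,h}\|$ of order $h/t^{k/4+\cdot}$ in the $C^1$ operator norm. Since $C^1(\M) \hookrightarrow H^1(\M)$ on the compact manifold, both errors can be combined into a single bound on $\|T - T_{t,h}\|_{H^1 \to H^1}$, and Proposition~\ref{prop:eigen} guarantees that all three operators are compact on $H^1(\M)$ so that the standard Riesz-projection machinery is available.

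Given a target eigenvalue $\lambda_i$ of $T$, I would enclose it by a small closed smooth curve $\Gamma \subset \rho(T)$ that separates it from the remainder of $\sigma(T)$. Norm convergence $T_{t,h} \to T$ in $H^1$ then guarantees, for $t$ and $h/t^{k/4+3}$ sufficiently small, that $\Gamma \subset \rho(T_{t,h})$ and that the resolvent $(z - T_{t,h})^{-1}$ is uniformly bounded on $\Gamma$, so that the Riesz projection
\[
E(\sigma_i^{t,h}, T_{t,h}) = \frac{1}{2\pi i}\oint_\Gamma (z - T_{t,h})^{-1}\,dz
\]
is well-defined and has the same finite rank as $E(\lambda_i, T)$; this identifies the cluster $\sigma_i^{t,h}$ that collapses to $\lambda_i$. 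Applying the resolvent identity
\[
(z - T_{t,h})^{-1} - (z - T)^{-1} = (z - T_{t,h})^{-1}(T_{t,h} - T)(z - T)^{-1}
\]
and integrating along $\Gamma$ yields $\|E(\sigma_i^{t,h}, T_{t,h}) - E(\lambda_i, T)\|_{H^1 \to H^1} \le C\|T - T_{t,h}\|_{H^1 \to H^1}$. Applied to any $\phi \in E(\lambda_i, T) X$, this gives the eigenfunction estimate at the stated rate $t^{1/2} + h/t^{k/4+2}$.

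For the eigenvalue part, the symmetry of $R_t$ and $\bar R_t$ noted in the proof of Proposition~\ref{prop:eigen} makes $T, T_t, T_{t,h}$ self-adjoint with respect to natural (weighted) inner products, so the Rayleigh quotient characterization furnishes an asymptotic identity of the form $\lambda_i^{t,h} - \lambda_i \approx \langle (T_{t,h} - T)\phi_i, \phi_i\rangle$ for a normalized eigenfunction $\phi_i$. I expect the extra $1/t$ in the eigenvalue rate (i.e.\ $h/t^{k/4+3}$ versus $h/t^{k/4+2}$) to arise from testing $(T_{t,h} - T)\phi_i$ in the ambient $L^2$ inner product rather than only bounding it as an operator output; this pairing activates the $1/t$-scaling present in the definitions of $L_t$ and $L_{t,h}$, which was absorbed into the projection norm in the eigenfunction estimate but must be paid explicitly in the eigenvalue estimate.

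The main obstacle will be the space mismatch between the three operators: $T$ and $T_t$ act naturally on $H^1(\M)$, whereas $T_{t,h}$ is defined through pointwise evaluation at the samples and is most naturally analyzed on $C^1(\M)$. Careful bookkeeping is needed to transport the $C^1$ bound of Theorem~\ref{thm:converge_c1} into an $H^1$ bound without losing additional negative powers of $t$, and to verify that $\Gamma$ can be chosen uniformly in $(t,h)$ so that the spectral gap of $T$ at $\lambda_i$ dominates the perturbation, thereby preserving the ordering of the eigenvalues. Both issues are manageable once $t$ and $h/t^{k/4+3}$ are sufficiently small, but they are the essential technical step in stitching the two convergence theorems into the stated spectral estimate.
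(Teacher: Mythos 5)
Your overall blueprint—enclose $\lambda_i$ by a contour $\Gamma$, show $\Gamma\subset\rho(T_{t,h})$, use Riesz projections and the dimension count to match clusters, then deduce eigenvalue localization and eigenfunction convergence—is the same machinery the paper uses, and Theorems~\ref{thm:resovent} and~\ref{thm:converge_evector} are exactly the tools you would need. But there is a genuine gap in the way you propose to combine the two convergence estimates, and it is precisely the ``space mismatch'' you flag at the end but then wave away as bookkeeping.

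You write that because $C^1(\M)\hookrightarrow H^1(\M)$, the $C^1$ bound on $T_t - T_{t,h}$ and the $H^1$ bound on $T-T_t$ ``can be combined into a single bound on $\|T - T_{t,h}\|_{H^1\to H^1}$.'' This does not work: the embedding goes the wrong way for operator norms. A bound $\|(T_t-T_{t,h})u\|_{C^1}\le C\|u\|_{C^1}$ gives $\|(T_t-T_{t,h})u\|_{H^1}\le C'\|u\|_{C^1}$, not $\le C'\|u\|_{H^1}$; and worse, $T_{t,h}$ requires pointwise evaluation at the sample points, so it is literally undefined on a generic $H^1$ function. Consequently $\|T-T_{t,h}\|_{H^1\to H^1}$ is not a meaningful quantity, the resolvent identity you invoke cannot be integrated in $H^1$, and the single-contour argument collapses. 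The paper resolves this by running the perturbation argument in \emph{two stages}: apply Theorem~\ref{thm:converge_evector} once to the pair $(T,T_t)$ on $X=H^1(\M)$ (using Theorem~\ref{thm:converge_h1} and Lemma~\ref{lem:norm_inverse_h1}), and a second time to the pair $(T_t,T_{t,h})$ on $X=C^1(\M)$ (using Theorem~\ref{thm:converge_c1} and the resolvent bound of Lemma~\ref{lem:norm_inverse_c1}, whose proof bootstraps $H^1$ information into $C^1$ via the smoothing formula $v=(u+T_tv)/z$ and Theorem~\ref{thm:bound}). The two projection errors are then chained: $\|x-E(\sigma_{t,j},T_t)x\|_{H^1}\le Ct^{1/2}$ and $\|E(\sigma_{t,j},T_t)x-E(\sigma_{t,h,j},T_{t,h})x\|_{C^1}\le Ch/t^{k/4+2}$, and the final triangle inequality is taken in $H^1$ using $\|\cdot\|_{H^1}\le C\|\cdot\|_{C^1}$. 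The ability to chain depends crucially on the regularization property of $T_t$ (it maps $H^1$ into $C^1$), which your single-norm plan does not exploit.

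Your eigenvalue argument is also different from the paper's, and less robust. You propose a Rayleigh-quotient identity $\lambda_i^{t,h}-\lambda_i\approx\langle(T_{t,h}-T)\phi_i,\phi_i\rangle$, but the operators are only self-adjoint with respect to $t$- and $h$-dependent weighted pairings (cf.\ Proposition~\ref{prop:eigen}), and $(T_{t,h}-T)\phi_i$ again runs into the domain issue. The paper instead localizes the spectrum directly: it constructs $r_1=\frac{4|\M|}{t^{k/4+1}}\|T_{t,h}-T_t\|_{C^1}+\|T-T_t\|_{H^1}$, shows $\sigma(T_{t,h})\subset\bigcup_n B(\lambda_n,r_1)\cup B(0,t^{1/2})$ via the $C^1$ resolvent bound and Theorem~\ref{thm:resovent} (the extra $t^{-1}$ in the eigenvalue rate comes from the $t^{-(k+2)/4}/|z|$ factor in Lemma~\ref{lem:norm_inverse_c1} combined with $|z|\ge t^{1/2}$, not from an $L^2$ pairing), and then uses the dimension count from Theorem~\ref{thm:converge_evector} together with Theorem~\ref{thm:converge_ev_Tt} to identify which ball each $\lambda_j^{t,h}$ lands in, which is what actually pins down the ordering and gives $|\lambda_j^{t,h}-\lambda_j|\le Cr_1$. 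You should replace the Rayleigh-quotient heuristic with this localization-plus-counting argument, and replace the single $H^1$ contour bound with the two-stage $H^1$/$C^1$ application of the perturbation theorem.
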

Using the above theorem, we know that the eigenvalues of $T_{t,h}$ converge to
the eigenvalues of $T$. From Proposition \ref{prop:eigen_intergral_dis}, the
inverses of the eigenvalues of $T_{t,h}$ and $T$ are the eigenvalues of
\eqref{eqn:eigen_dis} and \eqref{eqn:eigen_neumann} respectively. Then, the
eigenvalues of \eqref{eqn:eigen_dis} converge to the eigenvalues of
\eqref{eqn:eigen_neumann}. The above theorem also states the convergence of the
generalized eigenfunctions, namely the generalized eigenfunctions of $T_{t,h}$
to the generalized eigenfunctions of $T$. 
Since all  generalized eigenfunctions of $T_{t,h}$ and $T$ 
are indeed the eigenfunctions as proved in Propositions \ref{prop:eigen}, 
we obtain the convergence of the eigenfunctions of $T_{t,h}$ to that of $T$.
Then the convergence of the eigenfunctions of \eqref{eqn:eigen_dis}
to that of \eqref{eqn:eigen_neumann} can be obtained by using the facts 
that $T_{t,h}$ and \eqref{eqn:eigen_dis} ($T$ and \eqref{eqn:eigen_neumann})
share the same eigenfunctions as proved in 
Proposition \ref{prop:eigen_intergral_dis}.



\section{Structure of the Proof}
\label{sec:intermediate}
To prove Theorem \ref{thm:eigen_neumann}, we need some results in
perturbation theory for spectral projection. First, we need the following theorem
to obtain the convergence rate of the eigenvalues.
\begin{theorem}
\label{thm:resovent}
Let $(X,\|\cdot\|_X)$ be an arbitrary Banach space. Let $S$ and
$T$ be compact linear operators on $X$ into $X$.
 Let $z\in \rho(T)$. Assume
\begin{eqnarray}
  \|T-S\|_{X} \le \frac{1}{2\|(z-T)^{-1}\|_{X}}.
\end{eqnarray}
Then $z\in \rho(S)$ and $(z-S)^{-1}$ has the bound
\begin{eqnarray}
  \|(z-S)^{-1}\|_{X}\le 2\|(z-T)^{-1}\|_{X}.
\end{eqnarray}
\end{theorem}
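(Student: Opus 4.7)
The plan is to treat this as a standard Neumann series perturbation argument. The key algebraic identity is to factor the candidate resolvent of $S$ through the known resolvent of $T$. Specifically, since $S - T = -(z-S) + (z-T)$, I would write
\begin{equation*}
z - S = (z-T) - (S-T) = (z-T)\bigl(I - (z-T)^{-1}(S-T)\bigr),
\end{equation*}
using that $(z-T)^{-1}$ exists as a bounded operator on $X$ because $z \in \rho(T)$.

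Next, I would set $A := (z-T)^{-1}(S-T)$ and estimate its operator norm by submultiplicativity together with the hypothesis of the theorem:
\begin{equation*}
\|A\|_X \le \|(z-T)^{-1}\|_X \, \|S-T\|_X \le \|(z-T)^{-1}\|_X \cdot \frac{1}{2\|(z-T)^{-1}\|_X} = \frac{1}{2}.
\end{equation*}
Since $\|A\|_X \le 1/2 < 1$, the Neumann series $\sum_{n=0}^{\infty} A^n$ converges in the operator norm to a bounded inverse of $I - A$, with the standard geometric-series bound
\begin{equation*}
\bigl\|(I-A)^{-1}\bigr\|_X \le \frac{1}{1-\|A\|_X} \le 2.
\end{equation*}

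Combining these two ingredients yields that $z - S$ is invertible with $(z-S)^{-1} = (I-A)^{-1}(z-T)^{-1}$, so $z \in \rho(S)$, and the norm estimate follows:
\begin{equation*}
\|(z-S)^{-1}\|_X \le \|(I-A)^{-1}\|_X \, \|(z-T)^{-1}\|_X \le 2\|(z-T)^{-1}\|_X.
\end{equation*}
This is a textbook resolvent perturbation argument, so I do not anticipate any real obstacle; the only care needed is to verify that all operators involved are well-defined bounded operators on $X$, which is immediate from compactness of $T$ and $S$ and the hypothesis $z \in \rho(T)$. The proof does not actually use compactness of $S$ or $T$ beyond boundedness, but that is harmless here.
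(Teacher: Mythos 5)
Your proof is correct, but it takes a genuinely different route from the paper's. You use the standard Neumann-series factorization: write $z-S=(z-T)\bigl(I-(z-T)^{-1}(S-T)\bigr)$, bound the perturbation $A=(z-T)^{-1}(S-T)$ by $\|A\|\le 1/2$, and invert $I-A$ by the geometric series to get $(z-S)^{-1}=(I-A)^{-1}(z-T)^{-1}$ with the factor-of-two bound. The paper instead establishes the lower bound $\|(z-S)x\|_X\ge\|(z-T)x\|_X-\|(T-S)x\|_X\ge\frac{1}{2\|(z-T)^{-1}\|_X}\|x\|_X$ for all $x$ (the intermediate line in the paper writes $\|(z-T)^{-1}\|_X$ where it plainly means its reciprocal), which gives injectivity and a bounded-below estimate, and then invokes compactness of $S$ to conclude surjectivity: for $z\ne 0$, $z-S$ is a compact perturbation of an isomorphism, hence Fredholm of index zero, so injectivity forces bijectivity. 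The two approaches trade off differently: your Neumann-series argument is self-contained, constructs the inverse explicitly, and, as you correctly observe, does not use compactness at all — it holds for arbitrary bounded $S$ and $T$; the paper's argument is slightly shorter on the page but silently relies on the Fredholm alternative (or the Riesz theory of compact operators) to upgrade injectivity to invertibility, which is why the hypothesis that $S$ is compact appears there and not in your proof. Both yield exactly the same conclusion and constant, so either is acceptable; yours is arguably the cleaner and more elementary of the two.
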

\begin{proof}For any $x\in X$
  \begin{eqnarray}
    \|(z-S)x\|_X&\ge& \|(z-T)x\|_X-\|(T-S)x\|_X\nonumber\\
&\ge& \left(\|(z-T)^{-1}\|_{X}-\|T-S\|_X\right)\|x\|_X\nonumber\\
&\ge&\frac{1}{2}\|(z-T)^{-1}\|_{X}\|x\|_X
  \end{eqnarray}
Using this inequality and the assumption that $S$ is compact operator, we have $z\in \rho(S)$ and
\begin{eqnarray}
  \|(z-S)^{-1}\|_{X}\le 2\|(z-T)^{-1}\|_{X}.
\end{eqnarray}
\end{proof}
We also need the following theorem (e.g.~\cite{Atkinson67}) to get the convergence rate of the eigenfunctions.
\begin{theorem}
\label{thm:converge_evector}
  Let $(X,\|\cdot\|_X)$ be an arbitrary Banach space. Let $S$ and
$T$ be compact linear operators on $X$ into $X$. Let $z_0\in \mathbb{C}, \; z_0\ne 0$ and let $\epsilon>0$ be less than $|z_0|$,
denote the circumference $|z - z_0|=\epsilon$ by $\Gamma$ and assume $\Gamma\subset \rho(T)$.
Denote the interior of $\Gamma$ by $U$. Let $\sigma_T=U\cap \sigma(T)\ne \emptyset$. $\sigma_S=U\cap\sigma(S)$.
 Let $E(\sigma_S,S)$ and $E(\sigma_T,T)$ be the corresponding spectral projections of $S$ for $\sigma_S$ and $T$ for $\sigma_T$, i.e.
 \begin{eqnarray}
   E(\sigma_S,S)=\frac{1}{2\pi i}\int_{\Gamma}(z-S)^{-1}\mathd z,\quad    E(\sigma_T,T)=\frac{1}{2\pi i}\int_{\Gamma}(z-T)^{-1}\mathd z.
 \end{eqnarray}
Assume
\begin{eqnarray}
 \|(T-S)S\|_X\le \min_{z\in \Gamma} \frac{|z|}{\|(z-T)^{-1}\|_X}
\end{eqnarray}
Then, we have
\begin{itemize}
\item[(1).] Dimension $E(\sigma_S,S)X = E(\sigma_T,T)X$, thereby $\sigma_S$ is nonempty and of the same multiplicity as $\sigma_T$.
\item[(2).] For every $x\in E(\sigma_T,T) X$,
\begin{eqnarray}
  \|x-E(\sigma_S,S)x\|_X \le \frac{M\epsilon}{c_0}\left(\|(T-S)x\|_X+\|x\|_X\|(T-S)S\|_X\right).
\end{eqnarray}
where $M=\max_{z\in \Gamma}\|(z-T)^{-1}\|_X$, $c_0=\min_{z\in \Gamma}|z|$.
\end{itemize}
\label{thm:eigen_perturb}
\end{theorem}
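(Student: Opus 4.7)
The plan is to use the Riesz projection representation $E(\sigma_T,T) = \frac{1}{2\pi i}\int_\Gamma(z-T)^{-1}\,dz$ and analogously for $S$, together with the resolvent identity $(z-T)^{-1} - (z-S)^{-1} = -(z-S)^{-1}(T-S)(z-T)^{-1}$. The distinctive feature of the hypothesis is that it controls $\|(T-S)S\|_X$ rather than $\|T-S\|_X$, so Theorem~\ref{thm:resovent} cannot be applied directly; instead every estimate needs to be arranged so that $(T-S)S$, not $T-S$, enters, via the identity $(z-T)^{-1} = \frac{1}{z}I + \frac{1}{z}T(z-T)^{-1}$ for $z\ne 0$ and the splitting $T = S + (T-S)$.

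First I would establish $\Gamma\subset\rho(S)$ together with a uniform bound of the form
\begin{equation*}
\|(z-S)^{-1}y\|_X \le C\|(z-T)^{-1}\|_X\bigl(\|y\|_X + |z|^{-1}\|(T-S)y\|_X\bigr),\qquad z\in\Gamma.
\end{equation*}
Given $(z-S)u = y$, write $(z-T)u = y - (T-S)u$ and use the identity $(T-S)(z-S)^{-1} = \tfrac{1}{z}(T-S) + \tfrac{1}{z}(T-S)S(z-S)^{-1}$, obtained by composing $(T-S)$ with $(z-S)^{-1} = \tfrac{1}{z}I + \tfrac{1}{z}S(z-S)^{-1}$, to bound $\|(T-S)u\|$; substituting back and invoking the hypothesis to absorb the $(T-S)S(z-S)^{-1}u$-term onto the left yields the bound, and with $y = 0$ the invertibility of $z-S$, hence $\Gamma\subset\rho(S)$.

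For conclusion~(2), using $E(\sigma_T,T)x = x$ gives
\begin{equation*}
x - E(\sigma_S,S)x = -\frac{1}{2\pi i}\int_\Gamma(z-S)^{-1}(T-S)(z-T)^{-1}x\,dz,
\end{equation*}
bounded by the $\Gamma$-length $2\pi\epsilon$ times the supremum of the integrand. Applying the Step~1 bound with $y = (T-S)(z-T)^{-1}x$ reduces the problem to controlling $\|(T-S)(z-T)^{-1}x\|$; iterating $(z-T)^{-1}x = \tfrac{1}{z}x + \tfrac{1}{z}T(z-T)^{-1}x$ and using $T = S + (T-S)$ decomposes this into a $\tfrac{1}{|z|}\|(T-S)x\|$-term, a $\tfrac{1}{|z|}\|(T-S)S\|\cdot\|(z-T)^{-1}x\| \le M|z|^{-1}\|(T-S)S\|\|x\|$-term, and a $(T-S)^2$-remainder absorbed by the same mechanism as in Step~1, yielding the stated estimate with $M = \max_\Gamma\|(z-T)^{-1}\|$ and $c_0 = \min_\Gamma|z|$. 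Conclusion~(1), the equality of dimensions, then follows: (2) says $E(\sigma_S,S)|_{E(\sigma_T,T)X}$ is close to the identity and hence injective, so $\dim E(\sigma_S,S)X \ge \dim E(\sigma_T,T)X$, and the reverse inequality follows by a symmetric application of the same estimates with $T$ and $S$ interchanged together with the fact that $E(\sigma_S,S)X$ is finite-dimensional since $S$ is compact.

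The main obstacle I foresee is controlling the $(T-S)^2$-remainder arising from the non-commutativity of $T$ and $S$: the substitution $T = S + (T-S)$ turns $(T-S)T$ into $(T-S)S + (T-S)^2$, and the second piece is not directly bounded by the hypothesis. The resolution is to treat the equation for $(T-S)(z-T)^{-1}x$ as an implicit identity and absorb $(T-S)^2$ on the left using the same smallness condition that gave the $(z-S)^{-1}$-bound in Step~1; carrying out this bookkeeping cleanly is the technical heart of the argument.
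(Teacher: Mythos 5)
The paper does not prove this theorem at all: it is stated with a citation to Atkinson (``We also need the following theorem (e.g.~\cite{Atkinson67})'') and used as a black box. So there is no internal proof to compare against, and your proposal has to be judged on its own terms.

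Your Step~1 is essentially sound, modulo a small point: with the hypothesis read literally as $\le$, the absorption yields $\|u\|\le\|u\|$ and gives nothing; one needs strict inequality (or a factor $\tfrac12$) for the a priori bound and the injectivity of $z-S$ to come out.

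The real gap is in Step~2, and it is the one you yourself flag as ``the technical heart'' but then wave away. Writing $w=(T-S)(z-T)^{-1}x$ and substituting $(z-T)^{-1}x=\tfrac{1}{z}x+\tfrac{1}{z}T(z-T)^{-1}x$ with $T=S+(T-S)$ gives
\begin{equation*}
w=\frac{1}{z}(T-S)x+\frac{1}{z}(T-S)S\,(z-T)^{-1}x+\frac{1}{z}(T-S)w .
\end{equation*}
To absorb $\tfrac{1}{z}(T-S)w$ onto the left you need $\|T-S\|/|z|<1$. But the hypothesis only controls $\|(T-S)S\|/|z|$, and these are genuinely different: one can have $\|(T-S)S\|$ tiny (even zero) while $\|T-S\|$ is huge, e.g.\ when $S$ has small rank and $T-S$ is supported off the range of $S$. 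So ``the same mechanism as in Step~1'' is not available here — Step~1 absorbs a term of the form $(T-S)S(z-S)^{-1}u$, not $(T-S)w$ — and the $(T-S)^2$-remainder is uncontrolled. This is a missing idea, not bookkeeping.

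The standard way around it is to use the \emph{other} second resolvent identity, $(z-T)^{-1}-(z-S)^{-1}=(z-T)^{-1}(T-S)(z-S)^{-1}$, and expand the $S$-resolvent rather than the $T$-resolvent: from $(z-S)^{-1}=\tfrac{1}{z}I+\tfrac{1}{z}S(z-S)^{-1}$ one gets
\begin{equation*}
(T-S)(z-S)^{-1}x=\frac{1}{z}(T-S)x+\frac{1}{z}(T-S)S\,(z-S)^{-1}x ,
\end{equation*}
in which $(T-S)S$ appears immediately and there is \emph{no} $(T-S)^2$-remainder at all; the residual $\|(z-S)^{-1}\|$ factor is exactly what your Step~1 bound controls. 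That choice of which resolvent to expand is the key move you are missing.

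Finally, your derivation of conclusion (1) from (2) does not close either: the bound in (2) contains $\|(T-S)x\|\le\|T-S\|\,\|x\|$, and the hypothesis does not make $\|T-S\|$ small, so you cannot conclude $\|x-E(\sigma_S,S)x\|<\|x\|$ and hence injectivity. Moreover, the hypothesis $\|(T-S)S\|\le\min_\Gamma|z|/\|(z-T)^{-1}\|$ is not symmetric in $T$ and $S$, so ``interchanging $T$ and $S$'' is not a move you are entitled to for the reverse inequality; the dimension equality needs a separate argument (e.g.\ the rank-stability of projections $P,Q$ with $\|P-Q\|<1$, which in turn needs a bound you have not established).
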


In order to utilize the above two theorems from pertubation theory, we show the following three theorems
which bound the approximation errors of the solution operators $T, T_{t}, T_{t, h}$ in certain operator norms.
\begin{theorem}
\label{thm:converge_h1}
Under the assumptions in \ref{sec:assum},
there exist generic constants $C>0$ and $T_0>0$ only depend on $\M$, such that
  \begin{eqnarray}
    \|T-T_t\|_{H^1}\le Ct^{1/2}
  \end{eqnarray}
provides $t\le T_0$.
\end{theorem}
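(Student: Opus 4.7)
The plan is to deduce the $H^1 \to H^1$ bound on $T - T_t$ from two ingredients: (a) a truncation estimate that controls $L_t u + \int_\M \bar{R}_t(\cdot,\by)\Delta_\M u(\by)\,\mathd\mu_\by$ in $L^2$ whenever $u$ is sufficiently smooth, and (b) an $H^1$-stability estimate for $L_t$, saying that any zero-mean solution of $-L_t w = g$ obeys $\|w\|_{H^1(\M)} \le C\|g\|_{L^2(\M)}$ uniformly for small $t$. Both are precisely the content of the improved Theorem~\ref{thm:integral_error} and Theorem~\ref{thm:regularity_boundary} advertised in the introduction.

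Concretely, fix $f \in H^1(\M)$. Replacing $f$ by $f - \frac{1}{|\M|}\int_\M f\,\mathd\mu$, I reduce to the zero-mean case; this does not alter $Tf - T_t f$ since both operators annihilate constants. Set $u = Tf$ and $u_t = T_t f$, so that $\Delta_\M u = f$ with $\frac{\p u}{\p \bn}=0$, and $\int_\M u = \int_\M u_t = 0$. Standard Neumann elliptic regularity on the smooth compact manifold $\M$ gives $u \in H^3(\M)$ with $\|u\|_{H^3} \le C\|f\|_{H^1}$. First I would apply Theorem~\ref{thm:integral_error} to $u$ to obtain
\begin{equation*}
-L_t u(\bx) \;=\; \int_\M \bar{R}_t(\bx,\by)\,f(\by)\,\mathd\mu_\by \;+\; r_t(\bx),
\qquad \|r_t\|_{L^2(\M)} \le C\,t^{1/2}\,\|u\|_{H^3}.
\end{equation*}
By the definition of $T_t$, the integral on the right is exactly $-L_t u_t$, so subtracting yields
\begin{equation*}
-L_t (u - u_t) \;=\; r_t,
\end{equation*}
with $u - u_t$ of zero mean. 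Applying the $H^1$ stability estimate of Theorem~\ref{thm:regularity_boundary} to $w = u - u_t$ then gives $\|u - u_t\|_{H^1(\M)} \le C\|r_t\|_{L^2} \le C\,t^{1/2}\,\|f\|_{H^1}$, which is the required bound.

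The hard step is (b), the $t$-uniform $H^1$ stability of $L_t$. An $L^2$ Poincar\'e-type bound is relatively clean, since one may test against $w$ and exploit the positive-definite symmetric quadratic form built from $R_t$. Upgrading to $H^1$ requires control of a gradient of $w$, and this is obstructed by the fact that $L_t$ does not reduce to any honest differential operator in a neighborhood of $\p\M$: in a boundary layer of width $O(t^{1/2})$ it is dominated by a first-order normal-derivative term, which is precisely the obstruction noted in the introduction. Isolating this layer and absorbing the boundary contribution while allowing the forcing $g$ to be only in $L^2$ (rather than in a stronger norm) is exactly the improvement over \cite{SS14} carried out in Theorem~\ref{thm:regularity_boundary}. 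Ingredient (a) is by comparison routine: a Taylor expansion of $u$ against $\bar{R}_t$, plus a standard estimate of a boundary layer of size $O(t^{1/2})$, together with the Sobolev bound $\|u\|_{H^3}\le C\|f\|_{H^1}$, produces the $t^{1/2}$ rate once one tracks the correct Sobolev norm.
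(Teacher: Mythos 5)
Your high-level plan --- a truncation estimate for $L_t u$ plus a stability estimate for $L_t$ --- is the correct strategy and is the one the paper follows, and your reduction to $u=Tf\in H^3$ with $\|u\|_{H^3}\le C\|f\|_{H^1}$ via Neumann elliptic regularity is fine. However, the way you have instantiated the two ingredients does not match what the cited theorems actually say, and the mismatch is precisely at the boundary obstruction you yourself flag at the end.

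Concretely, you claim Theorem~\ref{thm:integral_error} yields $\|r_t\|_{L^2}\le Ct^{1/2}\|u\|_{H^3}$ where $-L_tu=\int_\M\bar R_t f+r_t$. It does not: that theorem estimates $L_t(u-u_t)$ \emph{minus} the explicit boundary integral $\int_{\p\M}n_j\,\eta_i\,\nabla_i\nabla_j u\,\bar R_t\,\mathd\tau_\by$, in $L^2$ at rate $t^{1/2}$ and in $\dot H^1$ at rate $O(1)$. The boundary integral itself has $L^2$ norm only $O(t^{1/4})$ (and its gradient blows up like $t^{-1/4}$), so it cannot simply be absorbed into $r_t$ --- doing so would already ruin the $t^{1/2}$ rate in $L^2$, before one even worries about the stability step. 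Second, you invoke Theorem~\ref{thm:regularity_boundary} as a general $H^1$-stability estimate $\|w\|_{H^1}\le C\|L_t w\|_{L^2}$. That is not what it says: it is a specialized estimate for forcings of the boundary-integral form $\int_{\p\M}b_i\eta_i\bar R_t\,\mathd\tau_\by$, giving $\sqrt t\,\|\mathbf b\|_{H^1}$. The general interior stability is Theorem~\ref{thm:regularity}, which reads $\|u\|_{H^1}\le C(\|r\|_{L^2}+t\|\nabla r\|_{L^2})$. A $t$-uniform $L^2\to H^1$ stability for $L_t$ is not available (the gradient of the $\tfrac{t}{w_t}r$ term in the representation of the solution contributes $t\|\nabla r\|_{L^2}$, which cannot be controlled by $\|r\|_{L^2}$ alone); this is exactly why Theorem~\ref{thm:integral_error} also supplies an $H^1$ bound on the interior truncation error, a piece your proposal does not use.

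The correct assembly, which the paper carries out, splits three ways: write $L_t(u-u_t)=r_{\mathrm{int}}+r_{\p}$, feed $r_{\mathrm{int}}$ together with its $L^2$ bound ($Ct^{1/2}\|u\|_{H^3}$) \emph{and} its $H^1$ bound ($C\|u\|_{H^3}$) into Theorem~\ref{thm:regularity}, feed $r_\p$ into Theorem~\ref{thm:regularity_boundary} to gain the $\sqrt t$, and sum. Your two-step conflation of the stability estimates --- and the omission of the gradient part of the truncation estimate --- is the gap.
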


\begin{theorem}
\label{thm:converge_c1}
Under the assumptions in \ref{sec:assum}, there exist generic constants $C>0$ and $T_0>0$ only depend on $\M$,
 such that
  \begin{eqnarray}
    \|T_{t,h}-T_t\|_{C^1}\le \frac{Ch}{t^{k/4+2}}
  \end{eqnarray}
as long as $t\le T_0$ and $\frac{h}{\sqrt{t}}\le T_0$.
\end{theorem}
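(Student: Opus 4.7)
My plan is to compare $u := T_t f$ and $\tilde u := T_{t,h} f$ in three stages: (1) show that $u$ nearly satisfies the discrete equation at the sample points, (2) use discrete stability to convert the residual into a pointwise error estimate, and (3) propagate this to $C^1(\M)$ using the explicit representation formulas. For consistency, I would evaluate the integral equation for $u$ at each sample point $\bfp_i$,
\begin{equation*}
-\frac{1}{t}\int_\M R_t(\bfp_i,\by)(u(\bfp_i)-u(\by))\,d\mu_\by = \int_\M \bar{R}_t(\bfp_i,\by)\, f(\by)\,d\mu_\by,
\end{equation*}
and apply the $h$-integral approximation to both integrands. Since $R$ is supported in $[0,1]$, each integrand vanishes outside a region of volume $O(t^{k/2})$, and its $C^1$-norm is controlled by negative powers of $t$ coming from $\nabla_\by R_t$ and $\nabla_\by \bar{R}_t$. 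Subtracting the discrete equation defining $\tilde u$, the error $e_i := u(\bfp_i) - \tilde u(\bfp_i)$ solves
\begin{equation*}
-\frac{1}{t}\sum_j R_t(\bfp_i,\bfp_j)(e_i - e_j)\, V_j = \rho_i,
\end{equation*}
with $\rho_i$ bounded (pointwise or in $\ell^2$) by $h$ times explicit powers of $t^{-1}$.

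For stability, I would establish a discrete Poincar\'e-type inequality for the quadratic form associated with $L_{t,h}$, namely $\frac{1}{t}\sum_{i,j} R_t(\bfp_i,\bfp_j)(u_i - u_j)^2 V_i V_j$. The constraint $\sum_i e_i V_i = 0$---which follows from $\int_\M u = 0$ together with the $h$-integral approximation and the built-in zero-mean condition for $\tilde u$---supplies the necessary Poincar\'e control. The argument parallels the stability analysis of $L_t$ used in the proof of Theorem \ref{thm:converge_h1}. This converts the residual bound on $\rho$ into a quantitative bound on $e$ in the appropriate discrete norm.

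For the $C^1$ extension, I would exploit the representation
\begin{equation*}
u(\bx) = \frac{1}{w_t(\bx)}\int_\M R_t(\bx,\by)\, u(\by)\,d\mu_\by - \frac{t}{w_t(\bx)}\int_\M \bar{R}_t(\bx,\by)\, f(\by)\,d\mu_\by,
\end{equation*}
together with the analogous discrete formula for $\tilde u(\bx)$. Writing $u(\bx)-\tilde u(\bx)$ as a sum of quadrature errors (on identical integrands) plus a residual-driven term $\frac{1}{w_{t,h}(\bx)}\sum_j R_t(\bx,\bfp_j)\, e_j V_j$, both pieces inherit the bounds from Steps 1 and 2. Differentiating in $\bx$ costs an additional $t^{-1/2}$ per gradient of $R_t$ or $\bar{R}_t$, while derivatives of $1/w_{t,h}(\bx)$ can be handled similarly using a lower bound on $w_{t,h}$. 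Collecting all $t$-powers yields $\|u-\tilde u\|_{C^1(\M)} \le Ch/t^{k/4+2}\,\|f\|_{C^1}$.

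The main obstacle is Step 2: establishing a discrete Poincar\'e inequality whose constant depends only on $\M$ and is uniform in $t, h$, since the smoothness tools available for $L_t$ do not apply directly in the discrete setting. A secondary difficulty is the careful bookkeeping of $t$-powers in Steps 1 and 3; the $t^{k/4}$ exponent in the denominator indicates that integrand bounds should be taken in an $L^2$ sense, trading a factor $|\operatorname{supp}|^{1/2} = O(t^{k/4})$ against an $L^\infty$ bound via Cauchy-Schwarz over the kernel support---consistent with the Sobolev-norm improvements over \cite{SS14} advertised in the introduction.
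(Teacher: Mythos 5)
Your outline is sound in spirit but takes a genuinely different route from the paper, and I think it underestimates where the real work lives.

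The paper does not re-derive a discrete error estimate at the sample points. Instead, it imports the $H^1$ bound
$\|T_{t,h}f - T_t f\|_{H^1(\M)} \le \frac{Ch}{t^{3/2}}\|f\|_\infty$
(stated as Theorem~\ref{thm:dis_error}, proved in \cite{SS14}) as a black box, and the entire proof of Theorem~\ref{thm:converge_c1} is only the lifting step. Concretely, it introduces two intermediate operators $T^1_t$ and $T^2_t$ that interpolate between $T_t u$ and $T_{t,h}u$ through the shared representation formula: $T_t u \to T^1_t u$ replaces $w_t$ by $w_{t,h}$, $T^1_t u \to T^2_t u$ replaces $T_t u(\by)$ by $T_{t,h}u(\by)$ inside the kernel integral, and $T^2_t u \to T_{t,h}u$ replaces the remaining integrals by sums. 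The dominant term $T^1_t - T^2_t$ is exactly where Theorem~\ref{thm:dis_error} enters, via $\left|\int_\M R_t(\bx,\by)g(\by)\,d\mu_\by\right| \le Ct^{-k/4}\|g\|_{L^2}$ and $\left|\nabla_\bx\int_\M R_t(\bx,\by)g(\by)\,d\mu_\by\right| \le Ct^{-k/4-1/2}\|g\|_{L^2}$ with $g = T_t u - T_{t,h}u$; this is precisely the $t^{-k/4}$ Cauchy--Schwarz trade-off you correctly anticipate at the end.

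Two substantive comments on your plan. First, the obstacle you flag in Step 2 is already resolved: the discrete coercivity $\left<\bfu, \mathcal{L}\bfu\right>_\bV \ge C(1-C_0 h/\sqrt{t})\left<\bfu,\bfu\right>_\bV$ for zero-mean $\bfu$ is Theorem~\ref{thm:elliptic_L} (also from \cite{SS14}), so you would not need to establish it afresh. Second, the place you are being too cavalier is Step 1. The $h$-integral approximation hypothesis only controls quadrature error through $h\,|\operatorname{supp}(f)|\,\|f\|_{C^1}$, so a pointwise bound on $\rho_i$ passes through $\|T_t f\|_{C^1}$; by Theorem~\ref{thm:bound} this costs an extra $t^{-k/4-1/2}$ and would overshoot the target rate. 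To recover the correct rate you must accumulate $\rho$ in an $\ell^2$ sense, replacing $C^1$ norms of $T_t f$ on local kernel patches by Sobolev norms after summing in $i$ --- which is exactly the content of Theorem~\ref{thm:dis_error} and is why the paper keeps it as a separate lemma rather than inlining it. There is also a small but real normalization wrinkle: $\int_\M T_t f = 0$ plus the quadrature hypothesis gives $\sum_i T_t f(\bfp_i)V_i = O(h\|T_t f\|_{C^1})$, not zero, while $\sum_i \tilde u_i V_i = 0$ exactly, so $e$ must be recentered by an $O(h)$ constant before Theorem~\ref{thm:elliptic_L} applies. Both issues are fixable, but they are where the actual analytic content sits. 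If you grant yourself Theorem~\ref{thm:dis_error}, your Step 3 is essentially equivalent to the paper's proof; without it, you are committing to redoing a large part of \cite{SS14}.
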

\begin{theorem}
\label{thm:bound} Under the assumptions in \ref{sec:assum}, there exist generic constants $C>0$ and $T_0>0$ only depend on $\M$, such that
  \begin{eqnarray}
    \|T_t\|_{H^1}\le C,\quad \|T_{t,h}\|_{\infty}\le Ct^{-k/4}, \quad
\|T_{t,h}\|_{C^1}\le Ct^{-(k+2)/4}
  \end{eqnarray}
as long as $t\le T_0$ and $\frac{h}{\sqrt{t}}\le T_0$.
\end{theorem}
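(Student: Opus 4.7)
The plan is to establish the three bounds independently, each via an energy/Cauchy--Schwarz argument, bridging the discrete quantities to continuous ones using the $h$-integral approximation hypothesis.

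For the first bound $\|T_t\|_{H^1}\le C$, I would test the defining integral equation \eqref{eqn:integral} against $u=T_tf$ itself. By the symmetry $R_t(\bx,\by)=R_t(\by,\bx)$, the left-hand side becomes the nonnegative Dirichlet-type form
\begin{equation*}
\mathcal{E}_t(u)=\frac{1}{2t}\int_\M\!\int_\M R_t(\bx,\by)(u(\bx)-u(\by))^2\,d\mu_\bx d\mu_\by,
\end{equation*}
while the right-hand side is bounded by $\|f\|_{L^2}\|u\|_{L^2}$ (since $\int_\M\bar R_t(\bx,\cdot)$ is uniformly bounded). The key input is a nonlocal Poincaré-type inequality $\|\nabla u\|_{L^2}^2\le C\,\mathcal{E}_t(u)$ together with $\|u\|_{L^2}\le C\|\nabla u\|_{L^2}$ (from $\int_\M u=0$); both are available under the assumptions on $R$ and $\M$ and can be extracted from the regularity results of Theorem~\ref{thm:regularity_boundary}. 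This chain yields $\|u\|_{H^1}\le C\|f\|_{L^2}\le C\|f\|_{H^1}$.

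For $\|T_{t,h}\|_{\infty}\le Ct^{-k/4}$, I would proceed in two steps. First, obtain a discrete $L^2$-type bound on the solution vector $\bfu$: multiplying \eqref{eqn:dis} by $u_iV_i$ and summing gives the discrete energy identity
\begin{equation*}
\frac{1}{2t}\sum_{i,j}R_t(\bx_i,\bx_j)(u_i-u_j)^2 V_iV_j=\sum_{i,j}\bar R_t(\bx_i,\bx_j)f(\bx_j)u_iV_iV_j,
\end{equation*}
whose right-hand side is controlled by $\|f\|_{\infty}\|\bfu\|_V$ with $\|\bfu\|_V^2=\sum u_i^2V_i$. A discrete Poincar\'e inequality---proved by comparing the discrete Dirichlet form to its continuous counterpart via the $h$-integral approximation (the error is $O(h/\sqrt{t})$, hence negligible for $h/\sqrt{t}\le T_0$ small) and invoking the continuous Poincar\'e above---then gives $\|\bfu\|_V\le C\|f\|_{\infty}$. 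Second, substitute back into the extension formula for $T_{t,h}(f)(\bx)$ and apply Cauchy--Schwarz:
\begin{equation*}
\left|\sum_{j}R_t(\bx,\bx_j)u_jV_j\right|\le\Bigl(\sum_j R_t(\bx,\bx_j)^2 V_j\Bigr)^{1/2}\|\bfu\|_V.
\end{equation*}
The sum $\sum_j R_t(\bx,\bx_j)^2 V_j$ is, by the $h$-integral approximation, essentially $\int_\M R_t(\bx,\by)^2d\mu_\by\sim C_t^2\cdot t^{k/2}=Ct^{-k/2}$, giving the factor $t^{-k/4}$. Finally, the denominator $w_{t,h}(\bx)$ is bounded below uniformly by a positive constant (from the lower bound (ii) on $R$ and the $h$-integral approximation), so the quotient is controlled, and the analogous estimate for the $\bar R_t f$ term is $O(t^{1-k/4})$, which is dominated by $t^{-k/4}$.

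The $C^1$ bound is obtained by differentiating the extension formula in $\bx$ and applying the same Cauchy--Schwarz strategy. Since $\nabla_\bx R_t(\bx,\by)=-C_tR'(\cdot)\frac{\bx-\by}{2t}$, rescaling by $\by=\bx+\sqrt{t}\,\bz$ yields $\|\nabla_\bx R_t(\bx,\cdot)\|_{L^2(\M)}^2\sim C_t^2 t^{k/2-1}$, i.e.\ a factor $t^{-(k+2)/4}$, which accounts for the claimed exponent. The terms arising from differentiating $1/w_{t,h}(\bx)$ contribute $|\nabla w_{t,h}|\lesssim t^{-1/2}$ combined with the previously controlled numerators, and remain within the same $t^{-(k+2)/4}$ budget. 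The main obstacle here is the second step of the second bound: proving the discrete Poincar\'e inequality uniformly in $h$ and $t$, and uniformly bounding $w_{t,h}$ from below---both rely delicately on the $h$-integral approximation transferring the continuous coercivity/nondegeneracy to the discrete setting once $h/\sqrt{t}$ is sufficiently small; the rest of the argument is standard Cauchy--Schwarz applied to the explicit integral kernels.
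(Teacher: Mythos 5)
Your strategy for the two $T_{t,h}$ bounds coincides with the paper's: the paper invokes a pre-established discrete coercivity result (Theorem~\ref{thm:elliptic_L}, taken from \cite{SS14}) to get $\bigl(\sum_j u_j^2 V_j\bigr)^{1/2}\le C\|f\|_\infty$, and then applies Cauchy--Schwarz to the extension formula, obtaining the factors $C_t^{1/2}\sim t^{-k/4}$ and $C_t^{1/2}/t^{1/2}\sim t^{-(k+2)/4}$ exactly as you describe. Your proposal to re-derive the discrete coercivity by perturbing the continuous one by $O(h/\sqrt t)$ is essentially how Theorem~\ref{thm:elliptic_L} is proved, so there is no conceptual difference there. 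The lower bound on $w_{t,h}$ that you flag as a delicate point is likewise handled in the paper via Lemma~\ref{lem:bound_int_R_t} and the $h$-integral hypothesis, so your concern is well placed and is addressed by pre-existing lemmas.

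The one genuine gap is in your proof of $\|T_t\|_{H^1}\le C$. The ``nonlocal Poincar\'e inequality'' $\|\nabla u\|_{L^2}^2\le C\,\mathcal E_t(u)$ with $\mathcal E_t(u)=\frac{1}{2t}\int\int R_t(\bx,\by)(u(\bx)-u(\by))^2$ is \emph{false} for general $u\in H^1$: a function oscillating at spatial scale $\ll\sqrt t$ has $\mathcal E_t(u)\sim t^{-1}\|u\|_{L^2}^2$ but $\|\nabla u\|_{L^2}^2$ can be arbitrarily large. What is actually available (Lemma~\ref{lem:elliptic_v}) is
\begin{equation*}
\langle u,L_t u\rangle_\M\ \ge\ C\int_\M|\nabla v|^2\,\mathd\mu_\bx,
\qquad v(\bx)=\frac{1}{w_t(\bx)}\int_\M R_t(\bx,\by)u(\by)\,\mathd\mu_\by,
\end{equation*}
i.e.\ the Dirichlet energy of the \emph{kernel-smoothed} $v$, not of $u$ itself. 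To recover $\|\nabla u\|_{L^2}$, one must use the algebraic consequence of the integral equation, $u = v + \tfrac{t}{w_t}\,p$ with $p(\bx)=\int_\M\bar R_t(\bx,\by)f(\by)\,\mathd\mu_\by$, and separately bound $\bigl\|\nabla\bigl(\tfrac{t}{w_t}p\bigr)\bigr\|_{L^2}\lesssim t^{1/2}\|f\|_{L^2}$. This is the route the paper takes (compare the proof of Theorem~\ref{thm:regularity_boundary} and the explicit representation used in the proof of Theorem~\ref{thm:bound}). So replace the direct Poincar\'e-type inequality for $u$ with the Lemma~\ref{lem:elliptic_v} bound on $v$ plus the representation formula, and the rest of your first argument goes through.
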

\noindent 
The above three theorems will be proved in Section \ref{sec:converge-h1}, Section \ref{sec:converge-c1} and Section \ref{sec:bound} respectively.

Before we prove Theorem~\ref{thm:eigen_neumann}, we show the the estimates of $\|(z-T)^{-1}\|_{H^1(\M)}$
and $\|(z-T_t)^{-1}\|_{C^1(\M)}$ in the following two lemmas, which are needed to invoke
the results from pertubation theory. 
\begin{lemma}
\label{lem:norm_inverse_h1}
 Let $T$ be the solution operator of
the Neumann problem \eqref{eqn:neumann} and $z\in \rho(T)$, then
  \begin{eqnarray}
    \|(z-T)^{-1}\|_{H^1(\M)}\le \max_{n\in \mathbb{N}} \frac{1}{|z-\lambda_n|},
  \end{eqnarray}
where $\{\lambda_n\}_{n\in \mathbb{N}}$ is the set of eigenvalues of $T$.
\end{lemma}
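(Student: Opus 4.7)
The plan is to reduce the operator-norm estimate to a pointwise scalar maximum via the spectral decomposition of $T$. Since $T$ is the inverse of the Neumann Laplacian on the mean-zero subspace, it is compact and self-adjoint on $L^2(\M)$, and hence admits a complete $L^2$-orthonormal system of eigenfunctions $\{\phi_n\}_{n\in\mathbb{N}}$ with the real negative eigenvalues $\lambda_n$ listed earlier in the paper.

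The key observation I would exploit is that this same basis is simultaneously orthogonal in the $H^1$ inner product. From $\Delta_\M\phi_n = \lambda_n^{-1}\phi_n$ together with the Neumann boundary condition, integration by parts yields
\begin{equation*}
\int_\M \nabla\phi_n\cdot\nabla\phi_m\, \mathd\mu = -\frac{1}{\lambda_m}\,\delta_{nm},
\end{equation*}
so in particular $\|\nabla\phi_n\|_{L^2}^2 = 1/|\lambda_n|$ and $\{\phi_n\}$ diagonalizes both the $L^2$ and the $H^1$ inner products.

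Given this double orthogonality, for any zero-mean $f\in H^1(\M)$ with expansion $f = \sum_n a_n\phi_n$, one has $\|f\|_{H^1}^2 = \sum_n |a_n|^2 \bigl(1 + 1/|\lambda_n|\bigr)$. Since the resolvent acts diagonally as $(z-T)^{-1}\phi_n = (z-\lambda_n)^{-1}\phi_n$, the same expansion gives
\begin{equation*}
\|(z-T)^{-1}f\|_{H^1}^2 = \sum_n \frac{|a_n|^2\bigl(1+1/|\lambda_n|\bigr)}{|z-\lambda_n|^2} \le \left(\max_n \frac{1}{|z-\lambda_n|}\right)^{2} \|f\|_{H^1}^2.
\end{equation*}
Taking the supremum over unit $f$ would deliver the stated bound.

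The only genuine step is verifying the joint $L^2$ and $H^1$ orthogonality of $\{\phi_n\}$, which is immediate from the Neumann condition via integration by parts; everything else is Parseval in an eigenbasis that diagonalizes both norms. A minor bookkeeping point is the treatment of constants (the kernel of $\Delta_\M$): either one restricts $T$ to the zero-mean subspace, matching the paper's convention that all $\lambda_n<0$, or one includes the constant mode and observes that $(z-T)^{-1}$ acts on it by multiplication by $1/z$, which is still bounded by the same maximum. I do not foresee any serious obstacle.
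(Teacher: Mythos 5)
Your proof is correct and follows essentially the same route as the paper: expand in an eigenbasis of $T$ that is simultaneously orthogonal in $L^2$ and $H^1$, then apply Parseval. The paper simply asserts that the (suitably normalized) eigenfunctions form an $H^1$-orthonormal basis, whereas you supply the integration-by-parts computation $\int_\M \nabla\phi_n\cdot\nabla\phi_m = -\delta_{nm}/\lambda_n$ that justifies the joint diagonalization, and you work with $L^2$-normalized rather than $H^1$-normalized eigenfunctions (a cosmetic difference since the weights $1+1/|\lambda_n|$ cancel in the ratio); in effect your write-up fills in a step the paper leaves implicit.
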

\begin{proof}
Suppose $\phi_j, \;j\in \mathbb{N}$ be the normalized eigenfunction of $T$ corresponding to $\lambda_j, \;j\in \mathbb{N}$.
Then it is well known that $\{\phi_j\}_{j\in \mathbb{N}}$ is a orthonormal basis of $H^1(\M)$.

For any $x\in H^1(\M)$, $z\in \rho(T)$, first we can expand $x$ over $\{\phi_j\}_{j\in \mathbb{N}}$ to obtain
\begin{eqnarray}
  x=\sum_{j=1}^\infty c_j\phi_j.
\end{eqnarray}
Then, we have
  \begin{eqnarray}
    \|(z-T)x\|_{H^1}&=& \left\|\sum_{j=1}^\infty c_j (z-T)\phi_j\right\|_{H^1}
=\left\|\sum_{j=1}^\infty c_j (z-\lambda_j)\phi_j\right\|_{H^1}\nonumber\\
&=&\left(\sum_{j=1}^\infty c_j^2 |z-\lambda_j|^2\right)^{1/2} \ge
\min_{n\in \mathbb{N}}|z-\lambda_n| \left(\sum_{j=1}^\infty c_j^2\right)^{1/2}\nonumber\\
&=& \min_{n\in \mathbb{N}}|z-\lambda_n|\|x\|_{H^1}
  \end{eqnarray}
\end{proof}

\begin{lemma}
\label{lem:norm_inverse_c1}
Let $T_t$ be the solution operator of the integral equation \eqref{eqn:integral}. For any $z\in \mathbb{C}\backslash \bigcup_{n\in \mathbb{N}}B(\lambda_n, r_0)$ 
with $r_0>\|T-T_t\|_{H^1}$, then
  \begin{eqnarray}
   \|(z-T_t)^{-1}\|_{C^1}\le \max\left\{\frac{2|\M|}{ |z|t^{(k+2)/4}} \left(\min_{n\in \mathbb{N}}|z-\lambda_n| - \|T-T_t\|_{H^1}\right)^{-1},\frac{2}{|z|}\right\}
\end{eqnarray}
\end{lemma}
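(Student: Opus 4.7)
The plan is a two-step lifting: first obtain an $H^1$ resolvent bound for $T_t$ by perturbing off $T$, then upgrade it to $C^1$ by exploiting the smoothing built into the integral formula for $T_t u$. Both ingredients are already visible above: the perturbation machinery of Theorem~\ref{thm:resovent} and the fixed-point identity for $T_t u$ used in the proof of Proposition~\ref{prop:eigen}.

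For the $H^1$ step, Lemma~\ref{lem:norm_inverse_h1} gives $\|(z-T)^{-1}\|_{H^1}^{-1}\ge\min_{n\in\mathbb{N}}|z-\lambda_n|$, which by hypothesis is strictly larger than $\|T-T_t\|_{H^1}$. The same computation as in the proof of Theorem~\ref{thm:resovent}, but without the factor-of-two safety margin built into that statement, yields $z\in\rho(T_t)$ and
\begin{equation*}
\|(z-T_t)^{-1}\|_{H^1}\le\bigl(\min_{n\in\mathbb{N}}|z-\lambda_n|-\|T-T_t\|_{H^1}\bigr)^{-1}.
\end{equation*}
Now fix $f\in C^1(\M)$ and let $u=(z-T_t)^{-1}f$. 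The identity $(z-T_t)u=f$ rearranges to $u=(T_tu+f)/z$, so $\|u\|_{C^1}\le(\|T_tu\|_{C^1}+\|f\|_{C^1})/|z|$, and the task reduces to a quantitative smoothing bound on $T_t u$.

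For the smoothing bound I invoke the integral representation
\begin{equation*}
T_tu(\bx)=\frac{1}{w_t(\bx)}\int_\M R_t(\bx,\by)T_tu(\by)\mathd\by+\frac{t}{w_t(\bx)}\int_\M\bar{R}_t(\bx,\by)u(\by)\mathd\by
\end{equation*}
from the proof of Proposition~\ref{prop:eigen}, combined with uniform-in-$\bx$ estimates $w_t(\bx)\ge c>0$, $\|R_t(\bx,\cdot)\|_{L^2(\M)}=O(t^{-k/4})$, $\|\nabla_\bx R_t(\bx,\cdot)\|_{L^2(\M)}=O(t^{-(k+2)/4})$ (the same for $\bar{R}_t$), and $|\nabla w_t(\bx)|/w_t(\bx)^2=O(t^{-1/2})$. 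Cauchy--Schwarz then produces a bound of the shape $\|T_tu\|_{C^1}\le Ct^{-(k+2)/4}\bigl(\|T_tu\|_{L^2}+t\|u\|_{L^2}\bigr)$. Substituting $T_tu=zu-f$ to eliminate $\|T_tu\|_{L^2}$, and then using the Step~1 $H^1$ resolvent estimate together with the embedding $\|g\|_{H^1}\le|\M|^{1/2}\|g\|_{C^1}$ applied to $u$ and $f$, gives the first quantity inside the maximum; the second quantity $2/|z|$ just records the trivial alternative $\|u\|_{C^1}\le 2\|f\|_{C^1}/|z|$, which dominates when the smoothing contribution is negligible.

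The principal technical obstacle is producing the uniform gradient estimate on $T_t u$ near $\p\M$. In the interior the $L^2$ bounds on $\nabla_\bx R_t$, $\nabla_\bx\bar{R}_t$, and $\nabla w_t/w_t^2$ are routine, but near the boundary the kernel ball is only partially supported in $\M$, so the positive-reach assumption on $\p\M$ must be used to guarantee a uniform volume fraction of $\M$ in every $\sqrt{t}$-neighborhood, which in turn keeps $w_t$ bounded below and $|\nabla w_t|/w_t^2$ of order $t^{-1/2}$.
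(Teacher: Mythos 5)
Your overall plan mirrors the paper's: an $H^1$ resolvent bound for $T_t$ obtained by perturbing off $T$ (via Lemma~\ref{lem:norm_inverse_h1}), the fixed-point identity for $(z-T_t)^{-1}$, and a quantitative $L^2\to C^1$ smoothing estimate for $T_t$ extracted from the integral representation. The step that fails to close is the substitution $T_tu=zu-f$: it destroys the $1/|z|$ prefactor that the claimed bound carries in both terms of the maximum. Starting from your estimate
\begin{equation*}
\|T_tu\|_{C^1}\le Ct^{-(k+2)/4}\left(\|T_tu\|_{L^2}+t\|u\|_{L^2}\right),
\end{equation*}
the substitution gives $\|T_tu\|_{C^1}\le Ct^{-(k+2)/4}\left(|z|\|u\|_{L^2}+\|f\|_{L^2}+t\|u\|_{L^2}\right)$, and when you divide by $|z|$ in $\|u\|_{C^1}\le\left(\|T_tu\|_{C^1}+\|f\|_{C^1}\right)/|z|$ the $|z|$ cancels against the leading term, leaving a contribution $Ct^{-(k+2)/4}\|u\|_{L^2}$ with no $1/|z|$ factor at all. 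Plugging in the $H^1$ resolvent estimate for $\|u\|_{L^2}$ then produces a bound of the form
\begin{equation*}
Ct^{-(k+2)/4}\left(\min_{n\in\mathbb{N}}|z-\lambda_n|-\|T-T_t\|_{H^1}\right)^{-1}|\M|^{1/2}\|f\|_{C^1},
\end{equation*}
which exceeds the first expression in the stated maximum by a factor of $|z|$ whenever $|z|>1$. Since the lemma must hold for all $z$ in the resolvent set, including $|z|$ large, your argument proves a strictly weaker statement than the one claimed.

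The fix is exactly what the paper does: do not substitute $T_tu=zu-f$. Instead, eliminate $\|T_tu\|_{L^2}$ by the $L^2$-to-$L^2$ boundedness of $T_t$, i.e.\ $\|T_tu\|_{L^2}\le C\|u\|_{L^2}$, which follows from the coercivity in Lemma~\ref{lem:elliptic_L_t} and is recorded in the proof of Theorem~\ref{thm:bound}. This collapses your smoothing estimate to $\|T_tu\|_{C^1}\le Ct^{-(k+2)/4}\|u\|_{L^2}$, which you then feed into $\|u\|_{C^1}\le\left(\|T_tu\|_{C^1}+\|f\|_{C^1}\right)/|z|$ so that every term stays inside the $1/|z|$ prefactor. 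Finishing with the $H^1$ resolvent estimate applied to $\|u\|_{L^2}$ (together with $\|f\|_{H^1}\le|\M|^{1/2}\|f\|_{C^1}$) then reproduces both expressions in the maximum. The rest of your proposal — the $H^1$ step, the kernel estimates $\|R_t(\bx,\cdot)\|_{L^2}=O(t^{-k/4})$, $\|\nabla_\bx R_t(\bx,\cdot)\|_{L^2}=O(t^{-(k+2)/4})$, the lower bound on $w_t$ from Lemma~\ref{lem:bound_int_R_t}, and the boundary discussion — is sound and matches the paper's supporting lemmas.
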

\begin{proof}For any $x\in H^1(\M)$,
  \begin{eqnarray}
    \|(z-T_t)x\|_{H^1}&\ge& \|(z-T)x\|_{H^1}-\|(T-T_t)x\|_{H^1}\nonumber\\
&\ge&\left(\min_{n\in \mathbb{N}}|z-\lambda_n| - \|T-T_t\|_{H^1}\right)\|x\|_{H^1}
  \end{eqnarray}
Then $(z-T_t)^{-1}$ exists and
\begin{eqnarray}
  \|(z-T_t)^{-1}\|_{H^1}\le \left(\min_{n\in \mathbb{N}}|z-\lambda_n| - \|T-T_t\|_{H^1}\right)^{-1}.
\end{eqnarray}
For any $u\in C^1(\M)$,
\begin{eqnarray}
  \|(z-T_t)^{-1}u\|_{H^1}\le \left(\min_{n\in \mathbb{N}}|z-\lambda_n| - \|T-T_t\|_{H^1}\right)^{-1} |\M|\|u\|_{C^1}
\end{eqnarray}
where $|\M|$ is the volume of the manifold $\M$.

On the other hand, let $v=(z-T_t)^{-1}u$ which means $v=(u+T_tv)/z$
\begin{eqnarray}
  \|v\|_{C^1}&\le& \frac{1}{|z|}\left(\|u\|_{C^1}+\|T_t v\|_{C^1}\right)\nonumber\\
&\le & \frac{1}{|z|}\left(\|u\|_{C^1}+t^{-(k+2)/4}\|v\|_{L^2}\right)\nonumber\\
&\le & \frac{1}{|z|}\left(\frac{|\M|}{ t^{(k+2)/4}} \left(\min_{n\in \mathbb{N}}|z-\lambda_n| - \|T-T_t\|_{H^1}\right)^{-1}+1\right)\|u\|_{C^1}\nonumber
\end{eqnarray}
which proves that
\begin{eqnarray}
   \|(z-T_t)^{-1}\|_{C^1}\le \max\left(\frac{2|\M|}{|z| t^{(k+2)/4}}
\left(\min_{n\in \mathbb{N}}|z-\lambda_n| - \|T-T_t\|_{H^1}\right)^{-1},\frac{2}{|z|}\right)
\end{eqnarray}
\end{proof}

Using the above theorems and lemmas, we show that $\sigma(T_t)$
is close to $\sigma(T)$, which will be used in the proof of Theorem~\ref{thm:eigen_neumann}.
\begin{theorem}
Let $T_t$ be the solution operator of the integral equation \eqref{eqn:integral}, then
\label{thm:converge_ev_Tt}
  \begin{eqnarray}
    \sigma(T_t)\subset \bigcup_{n\in \mathbb{N}}B\left(\lambda_n,2\|T-T_t\|_{H^1(\M)}\right)
  \end{eqnarray}
\end{theorem}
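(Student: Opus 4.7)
The plan is to prove this by contrapositive: I will show that if $z$ lies outside the union of open balls $\bigcup_{n}B(\lambda_n,2\|T-T_t\|_{H^1})$, then $z$ must belong to the resolvent set $\rho(T_t)$. The proof will combine the resolvent-perturbation Theorem \ref{thm:resovent} with the explicit resolvent bound from Lemma \ref{lem:norm_inverse_h1}, both applied in the Banach space $X=H^1(\M)$ (recall from Proposition \ref{prop:eigen} that $T$ and $T_t$ are both compact operators on $H^1(\M)$).

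First I would take any $z\in \mathbb{C}$ such that $\min_{n\in\mathbb{N}}|z-\lambda_n|\ge 2\|T-T_t\|_{H^1(\M)}$. Since the $\lambda_n$ are exactly the nonzero eigenvalues of $T$ and $z$ is uniformly separated from them, $z\in\rho(T)$. Then Lemma \ref{lem:norm_inverse_h1} gives
\begin{equation*}
\|(z-T)^{-1}\|_{H^1(\M)}\le \frac{1}{\min_{n}|z-\lambda_n|}\le \frac{1}{2\|T-T_t\|_{H^1(\M)}}.
\end{equation*}
Rearranging this inequality produces exactly the hypothesis of Theorem \ref{thm:resovent} with $S=T_t$, namely
\begin{equation*}
\|T-T_t\|_{H^1(\M)}\le \frac{1}{2\|(z-T)^{-1}\|_{H^1(\M)}}.
\end{equation*}
That theorem then yields $z\in\rho(T_t)$.

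Taking the contrapositive, every $z\in\sigma(T_t)$ must satisfy $\min_n|z-\lambda_n|<2\|T-T_t\|_{H^1(\M)}$, which is exactly the claimed inclusion. The main (mild) care is the handling of the boundary case $\min_n|z-\lambda_n|=2\|T-T_t\|_{H^1(\M)}$: because Theorem \ref{thm:resovent} uses a non-strict inequality, the argument above actually places such boundary points in $\rho(T_t)$ as well, giving the inclusion into open balls as stated. I don't foresee any real obstacle; this is essentially a two-line argument once the preparatory results are in place, and the only thing to watch is the direction of the inequalities and the correctness of the Banach space in which the norms are taken.
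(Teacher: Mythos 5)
Your proposal is correct and follows essentially the same approach as the paper: take $z$ outside the union of balls, use Lemma \ref{lem:norm_inverse_h1} to bound $\|(z-T)^{-1}\|_{H^1}$, verify the hypothesis of Theorem \ref{thm:resovent} with $S=T_t$, and conclude $z\in\rho(T_t)$. The remark about the boundary case is a nice extra precision that the paper leaves implicit.
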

\begin{proof}
  Let $r_0=\|T-T_t\|_{H^1(\M)}$, $\mathcal{A}=\mathbb{C}\backslash \bigcup_{n\in \mathbb{N}}B(\lambda_n, 2r_0)$.
For any $z\in \mathcal{A}$, using Lemma \ref{thm:resovent}, we have
\begin{eqnarray*}
  \|(z-T)^{-1}\|_{H^1(\M)}\le \max_{n\in \mathbb{N}} \frac{1}{|z-\lambda_n|}\le \frac{1}{2r_0}
\end{eqnarray*}
which implies that
\begin{eqnarray*}
\|T-T_t\|_{H^1(\M)}=r_0 \le \frac{1}{2\|(z-T)^{-1}\|_{H^1(\M)}}.
\end{eqnarray*}
Then using Theorem \ref{thm:resovent}, we have $z\in \rho(T_t)$.

Since $z$ is arbitrary in $\mathcal{A}$, we get $\mathcal{A}\subset \rho(T_t)$. This means that
\begin{eqnarray*}
  \sigma(T_t)=\mathbb{C}\backslash\rho(T_t)\subset \mathbb{C}\backslash \mathcal{A}=
\bigcup_{n\in \mathbb{N}}B(\lambda_n, 2\|T-T_t\|_{H^1(\M)}).
\end{eqnarray*}
\end{proof}

Now we can show Theorem~\ref{thm:eigen_neumann}, i.e., the convergence of the eigenproblem with the Neumann boundary .
\begin{proof}
 Let $r_1=\frac{4|\M|}{t^{k/4+1}}\left\|T_{t,h}-T_t\right\|_{C^1}+\|T-T_t\|_{H^1(\M)}$, $\mathcal{A}=\mathbb{C}\backslash \left[\bigcup_{n\in \mathbb{N}}B(\lambda_n, r_1)
\bigcup B(0,t^{1/2})\right]$.
For any $z\in \mathcal{A}$, using Lemma \ref{thm:resovent}, we have
\begin{eqnarray*}
  \|(z-T_{t})^{-1}\|_{C^1}&\le& \frac{2|\M|}{|z| t^{(k+2)/4}} \left(\min_{n\in \mathbb{N}}|z-\lambda_n| - \|T-T_t\|_{H^1}\right)^{-1}\nonumber\\
&\le & \frac{2|\M|}{ t^{k/4+1}} \left(r_1 - \|T-T_t\|_{H^1}\right)^{-1}\nonumber\\
&=& \left(2\left\|T_{t,h}-T_t\right\|_{C^1}\right)^{-1}
\end{eqnarray*}
or 
\begin{eqnarray*}
  \|(z-T_{t})^{-1}\|_{C^1}&\le& \frac{2}{|z|}\le \frac{2}{t^{1/2}}\le \left(2\left\|T_{t,h}-T_t\right\|_{C^1}\right)^{-1}
\end{eqnarray*}
as long as $h$ small enough by using Theorem \ref{thm:converge_c1}.

Both above two inequalies implies that
\begin{eqnarray*}
\left\|T_{t,h}-T_t\right\|_{C^1}\le \frac{1}{2\|(z-T_t)^{-1}\|_{C^1}}.
\end{eqnarray*}
Then using Lemma \ref{thm:resovent}, we have $z\in \rho(T_{t,h})$.

Since $z$ is arbitrary in $\mathcal{A}$, we get $\mathcal{A}\subset \rho(T_{t,h})$. This means that
\begin{eqnarray}
\label{eqn:est_eigen}
  \sigma(T_{t,h})=\mathbb{C}\backslash\rho(T_{t,h})\subset \mathbb{C}\backslash \mathcal{A}
= \bigcup_{n\in \mathbb{N}}B(\lambda_n, r_1)\bigcup B(0,t^{1/2}).
\end{eqnarray}
Moreover, using Theorem \ref{thm:converge_ev_Tt} and the definition of $r_1$, we have
 \begin{eqnarray}
\label{eqn:est_eigen_T}
  \sigma(T_{t})\subset \bigcup_{n\in \mathbb{N}}B(\lambda_n, 2r_1).
\end{eqnarray}
On the other hand, using Theorem \ref{thm:converge_h1} and \ref{thm:converge_c1}, we know that there exist $C>0$ independent on $t$ and $h$, such that
\begin{eqnarray}
\label{eqn:est_radius}
  r_1\le C\left(t^{1/2}+\frac{h}{t^{k/2+3}}\right).
\end{eqnarray}
For any fixed eigenvalue $\lambda_n\in \sigma(T)$, let $\D\gamma_j=\min_{\lambda\in \sigma(T)\backslash \{\lambda_j\}}
|\lambda_j-\lambda|, \;j\in \mathbb{N}$ and
$\D\gamma=\min_{j\le n}\gamma_j$. Using the structure of $\sigma(T)$, we know that $\gamma>0$. Without loss of generality,
we can assume $t,h$ are small enough such that $r_1\le \gamma/6$.

Let $\Gamma_j=\{z\in \mathbb{C}:|z-\lambda_j|=\gamma/3\}$, $U_j$ be the aera enclosed by $\Gamma_j$.
 Let
$$\sigma_{t,j}=\sigma(T_t)\bigcap U_j,\quad\sigma_{t,h,j}=\sigma(T_{t,h})\bigcap U_j.$$

Using the definition of $\Gamma_j$, we know for any $j\le n$,
$\Gamma_j\subset \rho(T), \rho(T_t)$ and $ \rho(T_{t,h})$.

In order to apply Theorem \ref{thm:converge_evector}, we need to verify the conditions
\begin{eqnarray}
\label{eqn:cond_T}
  \|(T-T_t)T_t\|_{H^1}&\le& \min_{z\in \Gamma_j}\frac{|z|}{\|(z-T)^{-1}\|_{H^1}},\\
\label{eqn:cond_Tt}
\|(T_t-T_{t,h})T_{t,h}\|_{C^1}&\le& \min_{z\in \Gamma_j}\frac{|z|}{\|(z-T_t)^{-1}\|_{C^1}}.
\end{eqnarray}
Using Lemma \ref{lem:norm_inverse_h1} and the choice of $\Gamma_j$, we have
\begin{eqnarray}
  \min_{z\in \Gamma_j}\frac{|z|}{\|(z-T)^{-1}\|_{H^1}}\ge \frac{\min_{z\in \Gamma_j}|z|}{\max_{z\in \Gamma_j}\|(z-T)^{-1}\|_{H^1}}
\ge (|\lambda_j|-\gamma/3)\min_{z\in \Gamma_j,n\in \mathbb{N}}|z-\lambda_n|=(|\lambda_j|-\gamma/3)\gamma/3.\nonumber
\end{eqnarray}
Then, using Theorem \ref{thm:converge_h1} and Lemma \ref{thm:bound},
condition \ref{eqn:cond_T} is true as long as $t$ is small enough.

Using Lemma \ref{lem:norm_inverse_c1}, we have
\begin{eqnarray}
  \min_{z\in \Gamma_j}\frac{|z|}{\|(z-T_t)^{-1}\|_{C^1}}&\ge& \frac{\min_{z\in \Gamma_j}|z|}{\max_{z\in \Gamma_j}\|(z-T_t)^{-1}\|_{C^1}}
\nonumber\\
&\ge& \frac{(|\lambda_j|-\gamma/3)^2t^{(k+2/4)}}{2|\mathcal{M}|}
\left(\min_{z\in \Gamma_j,n\in \mathbb{N}}|z-\lambda_n|-\|T-T_t\|_{H^1}\right)\nonumber\\
&\ge&\frac{(|\lambda_j|-\gamma/3)^2t^{(k+2/4)}\gamma}{12|\mathcal{M}|}.
\end{eqnarray}
To get the last inequality, we use the fact that $\|T-T_t\|_{H^1}<r_1\le \gamma/6$ and
$\D\min_{z\in \Gamma_j,n\in \mathbb{N}}|z-\lambda_n|=\gamma/3$.

Using Theorem \ref{thm:converge_c1} and Lemma \ref{thm:bound},
 we can choose $h$ small enough such that condition \ref{eqn:cond_Tt} is satisfied.

Then using Theorem \ref{thm:converge_evector}, we have
\begin{eqnarray}
  \dim(E(\lambda_j,T))=  \dim(E(\sigma_{t,j},T_t)) =  \dim(E(\sigma_{t,h,j},T_{t,h})),\quad j=1,\cdots,n.
\end{eqnarray}
Combining \eqref{eqn:est_eigen}, above equality would imply that
\begin{eqnarray}
  |\lambda_j^{t,h}-\lambda_j|\le C\left(t^{1/2}+\frac{h}{t^{k/2+3}}\right),\quad j\in \mathbb{N}.
\end{eqnarray}

The convergence of eigenspace is also given by Theorem \ref{thm:converge_evector}.
For any
$x\in E(\lambda_n,T)$, $\|x\|_{C^1}=1$,
\begin{eqnarray}
  \|x-E(\sigma_{t,n},T_t)x\|_{H^1}&\le& C\|T-T_t\|_{H^1}\|x\|_{H^1}\le C t^{1/2},\\
  \|E(\sigma_{t,n},T_t)x-E(\sigma_{t,h,n},T_{t,h})x\|_{C^1}&\le& C\left(\|T_t-T_{t,h}\|_{C^1}+\|(T_t-T_{t,h})T_{t,h}\|_{C^1}\right) \le \frac{Ch}{t^{k/4+2}}.
\end{eqnarray}
Finally, we have
\begin{eqnarray}
  \|x-E(\sigma_{t,h,n},T_{t,h})x\|_{H^1}\le C \left(t^{1/2}+\frac{h}{t^{k/4+2}}\right).
\end{eqnarray}
\end{proof}



\section{Proof of Theorem \ref{thm:converge_h1}}
\label{sec:converge-h1}
We first introduce the local coordinate of the manifold $\M$.
According to Proposition 6.1 of \cite{SS14}, $\M$ can be locally parametrized
as follows.
\begin{eqnarray}
  \bx=\Phi(\bga): \Omega \subset \mathbb{R}^k \rightarrow \lm \subset \R^d
\end{eqnarray}
where $\bga=(\gamma^1, \cdots, \gamma^k)^t \in \R^k$ and $\bx = (x^1, \cdots, x^d)^t\in M$.


Let $\p_{i'}=\frac{\p}{\p\gamma^{i'}}$ be the tangent vector along the direction $\gamma^{i'}$.
Since $\M$ is a submanifold in $\R^d$ with induced metric, $\p_{i'} = (\p_{i'} \Phi^1, \cdots, \p_{i'} \Phi^d)$
and the metric tensor $$g_{i'j'} = <\p_{i'}, \p_{j'}> = \p_{i'} \Phi^l \p_{j'} \Phi^l.$$
Let $g^{i'j'}$ denote the inverse of $g_{i'j'}$, i.e.,
\begin{eqnarray*}
g_{i'l'}\,g^{l'j'}=\delta_{ij}=\left\{\begin{array}{cc}
1,&i=j,\\
0,&i\ne j.
\end{array}\right.
\end{eqnarray*}
For any function $f$ on $\M$, $\nabla f = g^{i'j'}\p_{j'} f \p_{i'}$ denotes the gradient
of $f$. For convenience, let $\nabla^j f$ denote the $x^j$ component of the gradient $\nabla f$,
i.e.,
\begin{eqnarray}
\label{relation-gp}
  \nabla^j f=\p_{i'}\Phi^jg^{i'j'}\p_{j'} f \quad \text{and} \quad \p_{i'}f =\p_{i'}\Phi^j \nabla^jf.
\end{eqnarray}

Then Theorem \ref{thm:converge_h1} is an easy corollary of following three theorems.
\begin{theorem}
Assume $\M$ and $\p\M$ are $C^\infty$. Let $u(\bx)$ be the solution of the problem~\eqref{eqn:neumann} and $u_t(\bx)$ be the solution of
corresponding problem~~\eqref{eqn:integral}. If $f\in C^{\infty}(\M), g\in C^{\infty}(\p\M)$ in both problems, then there exists constants
$C, T_0$ depending only on $\M$ and $\p\M$, so that for any $t\le T_0$,
\begin{eqnarray}
\label{eqn:integral_error_l2}
\left\|L_t (u- u_t)-\int_{\p\M}n_j(\by)\eta_i(\bx,\by)\nabla_i\nabla_ju(\by)\rhk\mathd \tau_\by\right\|_{L^2(\M)}&\le& Ct^{1/2}\|u\|_{H^3(\mathcal{M})},\\
\label{eqn:integral_error_h1}
\left\|\nabla \left(L_t (u- u_t)-\int_{\p\M}n_j(\by)\eta_i(\bx,\by)\nabla_i\nabla_ju(\by)\rhk\mathd \tau_\by\right)\right\|_{L^2(\M)}
&\leq& C\|u\|_{H^3(\mathcal{M})}.
\end{eqnarray}
where $\eta(\bx,\by)=\xi^i(\bx,\by)\p_i\Phi(\al)$ and $\al=\Phi^{-1}(\bx), \xi(\bx,\by)=\Phi^{-1}(\bx)-\Phi^{-1}(\by)$.
\label{thm:integral_error}
\end{theorem}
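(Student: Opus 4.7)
The plan is to reduce Theorem \ref{thm:integral_error} to a second-order Taylor expansion plus an integration-by-parts identity on $\M$. First I would rewrite, using $-L_t u_t = \int_\M \rhk f(\by)\,\mathd\mu_\by$ together with $f = \Delta_\M u$,
\begin{equation*}
L_t(u - u_t)(\bx) \;=\; L_t u(\bx) + \int_\M \rhk\, \Delta_\M u(\by)\, \mathd\mu_\by.
\end{equation*}
The task is then to show this sum equals the stated boundary integral up to remainders of size $t^{1/2}\|u\|_{H^3}$ in $L^2(\M)$ and $\|u\|_{H^3}$ in $H^1(\M)$.

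The central step is an integration-by-parts identity. Taylor-expand $u(\by)$ around $\bx$ in extrinsic coordinates to second order with cubic remainder, so that $u(\bx)-u(\by)$ splits into a first-order piece $-(y^j-x^j)\nabla^j u(\bx)$, a symmetric second-order piece $-\tfrac{1}{2}(y^i-x^i)(y^j-x^j)\nabla^i\nabla^j u(\bx)$, and a cubic remainder controlled by $\nabla^3 u$. Using the Euclidean kernel identity $\p^{\R^d}_{y^j}\rhk = R_t(\bx,\by)(x^j-y^j)/(2t)$ and the tangential substitution $y^j-x^j = \eta^j(\bx,\by) + O(|\bx-\by|^2)$ supplied by the parametrization $\Phi$, the first-order contribution to $L_t u$ rewrites as a divergence on $\M$ plus curvature corrections of lower order in $t$. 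Applying the divergence theorem on $\M$ together with the Neumann boundary condition $\p u/\p\bn = 0$ eliminates the first-order boundary piece; performing the analogous reduction for the quadratic term, the interior contribution matches $-\int_\M \rhk\,\Delta_\M u\,\mathd\mu$ and cancels against the added term, while the uncancelled boundary residual is exactly $\int_{\p\M} n_j(\by)\eta_i(\bx,\by)\nabla_i\nabla_j u(\by)\, \rhk\, \mathd\tau_\by$.

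To close, I would bound the remainders. For the $L^2$ estimate, Cauchy--Schwarz combined with the moment bound $\int_\M R_t(\bx,\by)|\bx-\by|^3\,\mathd\mu_\by = O(t^{3/2})$ controls the cubic-Taylor remainder by $Ct^{1/2}\|u\|_{H^3}$; the curvature corrections are absorbed in the same fashion since each extra factor of $|\bx-\by|$ under $R_t$ buys a factor of $t^{1/2}$. For the $H^1$ estimate, differentiating in $\bx$ transfers onto $R_t$ and $\rhk$, costing one factor of $t^{-1/2}$ in the kernel, which degrades the bound to $O(1)\|u\|_{H^3}$. The main obstacle, and the reason the argument of \cite{SS14} must be refined, is the near-boundary case: when $\bx$ lies within $O(\sqrt{t})$ of $\p\M$, the Taylor expansion has to be carried out in boundary-adapted local coordinates and the remainders must be controlled in $L^2(\M)$ rather than $L^\infty$. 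This forces a trace-type argument on $\p\M$ together with uniform parametrizations of a tubular neighborhood, so that the single Sobolev norm $\|u\|_{H^3}$ absorbs all derivative terms produced by the geometry.
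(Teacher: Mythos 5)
Your overall strategy matches the paper's: rewrite $L_t(u-u_t)$ as $L_t u$ plus the kernel average of $\Delta_\M u$, Taylor-expand, use the kernel identity $\nabla_{\by}\bar R_t = \frac{\bx-\by}{2t}R_t$ together with $\by-\bx = -\eta + O(|\xi|^2)$ to set up a divergence-theorem argument, and bound the residuals by moment estimates with $\nabla_{\bx}$ costing a factor $t^{-1/2}$. The paper decomposes the defect into exactly the four pieces you sketch: a cubic-Taylor remainder $r_1$, a kernel mismatch $r_2$, the boundary term $r_3 = \int_{\p\M}n^j\eta^i(\nabla^i\nabla^j u)\bar R_t\,\mathd\tau_\by$, and a divergence residual $r_4$ whose leading part cancels $\Delta_\M u$.

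However, there are two concrete gaps.

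First, the base point of the Taylor expansion is wrong. You expand $u(\by)$ around $\bx$, so your first- and second-order coefficients are $\nabla^j u(\bx)$ and $\nabla^i\nabla^j u(\bx)$. This is inconsistent with the boundary term you claim to recover, which carries $\nabla_i\nabla_j u(\by)$, and, more importantly, it breaks the mechanism for using the Neumann condition: the integration by parts is in $\by$, and a boundary integrand of the form $n^j(\by)\nabla^j u(\bx)\bar R_t$ is not annihilated by $\p u/\p\bn(\by)=0$ because the derivative is evaluated at the wrong point. The paper expands around $\by$ (derivatives at $\by$), writes $d(\bx,\by)=u(\bx)-u(\by)-(\bx-\by)\cdot\nabla u(\by)-\tfrac12\eta^i\eta^j\nabla^i\nabla^j u(\by)$, and then the first-order piece is absorbed by the identity $\int_{\p\M}\bar R_t\,\p u/\p\bn = \int_\M\bar R_t\Delta_\M u + \frac1t\int_\M(\bx-\by)\cdot\nabla u(\by)R_t = 0$, while the quadratic piece produces $r_2,r_3,r_4$. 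If you insist on expanding at $\bx$, you would have to re-center all derivatives to $\by$ first, introducing additional error terms you have not accounted for.

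Second, the $L^2$-in-$\bx$ control of the cubic remainder $r_1$ is not obtained the way you suggest. After Cauchy--Schwarz in $\by$ you face a term like $\int_\M\int_\M R_t(\bx,\by)|D^{2,3}u(\Phi_i(\alpha+s\xi))|^2\,\mathd\mu_\bx\mathd\mu_\by$, where the third derivative sits at an intermediate point $\bz=\Phi_i(\alpha+s\xi)$. The paper controls this by a change of variable $\bz=\Phi_i(\alpha+s\xi)$ inside a convex coordinate patch (using bi-Lipschitz bounds from Proposition 6.1 of \cite{SS14} and the kernel lower bound $R(r)>\delta_0$ for $r<1/2$), which recasts the $\bx$-integral as $\int R((\cdot)/s^2)$ against $\bz$ and collapses the double integral to $\int_{B^{2r}_{\bx_i}}|D^{2,3}u|^2$. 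Nothing like a trace theorem or tubular-neighborhood coordinates is needed or used; the near-boundary region is handled automatically by the finite cover of $\M$ by coordinate patches together with this substitution. Your appeal to a ``trace-type argument on $\p\M$'' points in the wrong direction: the boundary term $r_3$ needs no bound here (it is exactly the subtracted term in the statement), and the bounded pieces $r_1,r_2,r_4$ are handled by interior moment bounds and the change of variable, not by trace estimates.
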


\begin{theorem}
Assume $\M$ and $\p\M$ are $C^\infty$.
Let $u(\bx)$ solves the integral equation
\begin{eqnarray}
  L_t u = r(\bx)-\bar{r}
\end{eqnarray}
where $r\in H^1(\M)$ and $\bar{r}=\frac{1}{|\M|}\int_\M r(\bx)\mathd\bx$.
Then, there exist a constant $C>0, T_0>0$ independent on $t$, such that
\begin{eqnarray}
  \|u\|_{H^1(\M)}\le C\left(\|r\|_{L^2(\M)}+t\|\nabla r\|_{L^2(\M)}\right)
\end{eqnarray}
as long as $t\le T_0$.
\label{thm:regularity}
\end{theorem}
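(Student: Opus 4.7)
The plan is to split the estimate into an $L^2$ bound on $u$ and a separate gradient bound, each extracted from the integral equation by a different device.

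For the $L^2$ part, I would multiply $L_t u = r - \bar r$ by $u$ and integrate. Using the symmetry $R_t(\bx,\by) = R_t(\by,\bx)$ and the solvability condition $\int_\M u = 0$ built into $T_t$, the standard manipulation yields
\begin{eqnarray*}
\frac{1}{2t}\int_\M\int_\M R_t(\bx,\by)(u(\bx)-u(\by))^2\,\mathd\bx\,\mathd\by
= \int_\M (r-\bar r)\,u\,\mathd\bx \le \|r\|_{L^2}\|u\|_{L^2}.
\end{eqnarray*}
A nonlocal Poincar\'e inequality of the form
\begin{eqnarray*}
\|u\|_{L^2(\M)}^2 \le \frac{C}{t}\int_\M\int_\M R_t(\bx,\by)(u(\bx)-u(\by))^2\,\mathd\bx\,\mathd\by,
\end{eqnarray*}
valid for $t$ small, then delivers simultaneously $\|u\|_{L^2}\le C\|r\|_{L^2}$ and the uniform bound $\frac{1}{t}\iint R_t(u(\bx)-u(\by))^2 \le C\|r\|_{L^2}^2$ on the nonlocal Dirichlet energy.

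For the gradient part, I would rewrite the equation as the fixed-point identity
\begin{eqnarray*}
u(\bx)\,w_t(\bx) = \int_\M R_t(\bx,\by)\,u(\by)\,\mathd\by + t(r(\bx)-\bar r),\qquad
w_t(\bx):=\int_\M R_t(\bx,\by)\,\mathd\by,
\end{eqnarray*}
where $w_t$ is bounded below uniformly on $\M$ for small $t$. Differentiating tangentially and using $\nabla w_t = \int_\M \nabla_\bx R_t\,\mathd\by$, the $u(\bx)\nabla w_t$ terms cancel to leave
\begin{eqnarray*}
\nabla u(\bx)\,w_t(\bx) = \int_\M \nabla_\bx R_t(\bx,\by)(u(\by)-u(\bx))\,\mathd\by + t\,\nabla r(\bx).
\end{eqnarray*}
Since $|\nabla_\bx R_t(\bx,\by)| \le \frac{C}{\sqrt{t}}\,K_t(\bx,\by)$ for a kernel $K_t$ of the same form with comparable mass, pointwise Cauchy--Schwarz gives
\begin{eqnarray*}
|\nabla u(\bx)|^2 \le \frac{C}{t}\int_\M K_t(\bx,\by)(u(\by)-u(\bx))^2\,\mathd\by + Ct^2|\nabla r(\bx)|^2.
\end{eqnarray*}
Integrating in $\bx$ and comparing the $K_t$-Dirichlet energy to the $R_t$-Dirichlet energy from the first step yields $\|\nabla u\|_{L^2}^2 \le C\|r\|_{L^2}^2 + Ct^2\|\nabla r\|_{L^2}^2$, which combined with the $L^2$ bound gives the conclusion.

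The main technical obstacle is the final Dirichlet-energy comparison. The kernel $K_t$ arising from $|\nabla_\bx R_t|^2$ is essentially $C_t |R'(|\bx-\by|^2/(4t))|^2$, concentrated where $|\bx-\by|^2\sim 4t$, precisely the region where $R$ itself may vanish, so one cannot pointwise dominate $K_t\lesssim R_t$. The natural remedy is a two-scale argument: first bound $K_t$ pointwise by a wider kernel $R_{2t}$ (on which assumption (ii) guarantees the profile stays above $\delta_0$ on all of $[0,1]$), then control the $R_{2t}$-Dirichlet energy by the $R_t$-Dirichlet energy through a path/telescoping argument that writes $u(\bx)-u(\by)$ as a sum of differences through an intermediate point at distance $\sqrt{t}$. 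A second technical point is the uniform nonlocal Poincar\'e constant, which should follow either by a direct energy computation or as a consequence of the spectral convergence in Theorem~\ref{thm:converge_h1}, providing a uniform lower bound on the first nonzero eigenvalue of $L_t$ as $t\to 0$.
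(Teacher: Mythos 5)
Your overall strategy mirrors the paper's (the paper defers this theorem's proof to \cite{SS14}, but it deploys the identical two ingredients when proving the twin Theorem~\ref{thm:regularity_boundary}): the nonlocal Poincar\'e inequality you invoke is precisely Lemma~\ref{lem:elliptic_L_t}, and the gradient step via the differentiated fixed-point identity plus a Dirichlet-form bound is the content of Lemma~\ref{lem:elliptic_v}. However, one of your two proposed routes to close the argument is circular: you cannot deduce the uniform nonlocal Poincar\'e constant from the spectral convergence in Theorem~\ref{thm:converge_h1}, because that theorem is itself proved \emph{from} the present regularity result. Lemma~\ref{lem:elliptic_L_t} has to be established independently; in \cite{SS14} the device is to compare $\langle u,L_tu\rangle$ with the local Dirichlet energy $\int_\M|\nabla v|^2$ of the mollified function $v=w_t^{-1}\int_\M R_t u$ and then invoke the classical Poincar\'e inequality on $\M$.

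The second technical obstacle you flag is indeed the crux, and your two-scale sketch is underdeveloped in a way that matters. First, the kernel $K_t\propto|R'|\cdot\frac{|\bx-\by|}{\sqrt t}$ is not automatically concentrated near $|\bx-\by|^2\sim 4t$: assumption~(ii) guarantees $R>\delta_0$ only on $[0,1/2)$, so $R$ may vanish somewhere on $(1/2,1)$ while $R'$ does not, and that is exactly why $K_t\lesssim R_t$ can fail pointwise. Second, the one-hop telescoping you describe is geometrically tight: $R_{2t}$ is supported on $|\bx-\by|\le 2\sqrt{2t}$ while $R_t$ is bounded below by $\delta_0 C_t$ only on $|\bx-\by|<\sqrt{2t}$, so a single intermediate point at distance $\sqrt t$ need not leave both legs inside the region where $R_t$ is bounded below; you would either need to check the geometry carefully or allow a short chain and track the multiplicative constants. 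The paper avoids this entirely by bounding $\|\nabla v\|_{L^2}$ rather than $\|\nabla u\|_{L^2}$ directly through Lemma~\ref{lem:elliptic_v}, and then recovers $\nabla u = \nabla v + t\,\nabla\bigl((r-\bar r)/w_t\bigr)$ with $|\nabla w_t|\lesssim t^{-1/2}$, which eliminates any kernel-to-kernel Dirichlet-energy comparison at the level of the theorem and isolates all the hard work into a single reusable lemma.
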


\begin{theorem}
Assume $\M$ and $\p\M$ are $C^\infty$.
Let $u(\bx)$ solves the integral equation
\begin{eqnarray}
  L_t u = \int_{\p\M}b_i(\by)\eta_i(\bx,\by)\rhk\mathd \tau_\by-\bar{b}
\end{eqnarray}
where $\eta$ is same as that in Theorem \ref{thm:integral_error} and
\begin{eqnarray*}
  \bar{b}=\frac{1}{|\M|}\int_\M \int_{\p\M}b_i(\by)\eta_i(\bx,\by)\rhk\mathd \tau_\by\mathd\bx.
\end{eqnarray*}

Then, there exist constant $C>0, T_0>0$ independent on $t$, such that
\begin{eqnarray}
  \|u\|_{H^1(\M)}\le C\sqrt{t}\;\|\mathbf{b}\|_{H^1(\M)}.
\end{eqnarray}
as long as $t\le T_0$.
\label{thm:regularity_boundary}
\end{theorem}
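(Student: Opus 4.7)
The plan is to test the integral equation against $u$ itself and exploit two ingredients: the coercivity of $L_t$ on zero-mean functions (in the spirit of the proof of Theorem \ref{thm:regularity}), and the concentration of the right-hand side in an $O(\sqrt{t})$ tubular neighborhood of $\p\M$. The $\sqrt{t}$ gain will come from the smallness of $\eta$ on the support of $\bar{R}_t$ combined with a tubular-neighborhood estimate on $u^2$.

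Starting from the symmetric quadratic form $\langle u,L_tu\rangle_{L^2(\M)}=\frac{1}{2t}\int_\M\int_\M R_t(\bx,\by)(u(\bx)-u(\by))^2\,\mathd\by\,\mathd\bx$, the nonlocal-to-local analysis underlying Theorem \ref{thm:regularity} yields the coercivity estimate $\|u\|_{H^1(\M)}^2\le C\langle u,L_tu\rangle$ for zero-mean $u$ with $t$ small enough. Pairing $u$ with the equation and using $\int_\M u=0$ to kill the constant $\bar{b}$ reduces matters to bounding $\langle u,L_tu\rangle=\int_\M u(\bx)\int_{\p\M}b_i(\by)\eta_i(\bx,\by)\bar{R}_t(\bx,\by)\,\mathd\tau_\by\,\mathd\bx$.

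Swapping the order of integration gives $\langle u,L_tu\rangle=\int_{\p\M}b_i(\by)\Psi_i(\by)\,\mathd\tau_\by$ with $\Psi_i(\by):=\int_\M u(\bx)\eta_i(\bx,\by)\bar{R}_t(\bx,\by)\,\mathd\bx$. Since $\eta(\bx,\by)=\xi^m(\bx,\by)\p_m\Phi(\al)$ with $|\xi|\le C|\bx-\by|$ and $\bar{R}_t$ is supported on $\{|\bx-\by|\le 2\sqrt{t}\}$, we have $|\eta|\le C\sqrt{t}$ pointwise on this support. Cauchy-Schwarz in $\bx$ together with the uniform bound $\int_\M\bar{R}_t(\bx,\by)\,\mathd\bx=O(1)$ (valid even for $\by\in\p\M$) yields $|\Psi(\by)|^2\le Ct\int_\M u(\bx)^2\bar{R}_t(\bx,\by)\,\mathd\bx$. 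Integrating over $\p\M$, swapping integration order again, using $\int_{\p\M}\bar{R}_t(\bx,\by)\,\mathd\tau_\by\le Ct^{-1/2}$ on the tube $\M_{\sqrt{t}}:=\{\bx\in\M:\mathrm{dist}(\bx,\p\M)\le 2\sqrt{t}\}$ and $0$ outside (the kernel amplitude is $t^{-k/2}$ while $\p\M\cap B(\bx,2\sqrt{t})$ has $(k-1)$-volume $\lesssim t^{(k-1)/2}$), and applying the tube estimate $\int_{\M_{\sqrt{t}}}u^2\,\mathd\bx\le C\sqrt{t}\,\|u\|_{H^1(\M)}^2$ (derived via normal-coordinate parametrization plus the $L^2$-trace inequality) gives $\|\Psi\|_{L^2(\p\M)}\le C\sqrt{t}\,\|u\|_{H^1(\M)}$.

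Finally, Cauchy-Schwarz on $\p\M$ and the trace inequality $\|\mathbf{b}\|_{L^2(\p\M)}\le C\|\mathbf{b}\|_{H^1(\M)}$ produce $\langle u,L_tu\rangle\le \|\mathbf{b}\|_{L^2(\p\M)}\|\Psi\|_{L^2(\p\M)}\le C\sqrt{t}\,\|\mathbf{b}\|_{H^1(\M)}\,\|u\|_{H^1(\M)}$, and the first-step coercivity closes the argument. The main obstacle is rigorously establishing the coercivity $\|u\|_{H^1(\M)}^2\le C\langle u,L_tu\rangle$: the bulk estimate comparing the nonlocal Dirichlet energy with $\|\nabla u\|_{L^2(\M)}^2$ is routine via Taylor expansion inside the kernel's support, but the $O(\sqrt{t})$ boundary layer must be handled with care; fortunately this is exactly the machinery required in the proof of Theorem \ref{thm:regularity}, so it can be invoked directly here.
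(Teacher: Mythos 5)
Your reduction to bounding $\langle u,L_tu\rangle$, and your actual bound on $\langle u,L_tu\rangle$ via the function $\Psi_i(\by)$, the estimate $\int_{\p\M}\bar R_t(\bx,\by)\,\mathd\tau_\by\le Ct^{-1/2}$ (Lemma~\ref{cor:upper_bound_int_R_t} with $s=0$), and the tube estimate $\int_{\M_{\sqrt t}}u^2\le C\sqrt t\,\|u\|_{H^1}^2$, is both correct and genuinely different from the paper's route, which instead peels off $\eta^j\bar R_t(\bx,\by)\approx -2t\nabla^j\bar{\bar R}_t(\bx,\by)$ and then integrates by parts against $u$ using the Gauss formula (equations~\eqref{eq:est_boundary_1}--\eqref{eq:est_boundary_3}). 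Your derivation of the key inequality $\langle u,L_tu\rangle\le C\sqrt t\,\|\mathbf b\|_{H^1}\|u\|_{H^1}$ is shorter and avoids the $\bar{\bar R}$ machinery entirely, so that part is a nice simplification.

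The genuine gap is the coercivity you lean on to close the argument: $\|u\|_{H^1(\M)}^2\le C\langle u,L_tu\rangle$ is \emph{false} for general zero-mean $u\in H^1$. The nonlocal Dirichlet form $\frac{1}{2t}\int\int R_t(\bx,\by)(u(\bx)-u(\by))^2$ only sees oscillations at scale $\gtrsim\sqrt t$; for a mode oscillating at frequency $\omega$ with $\omega\sqrt t\gg1$ the form is $O(t^{-1}\|u\|_{L^2}^2)$ while $\|\nabla u\|_{L^2}^2\sim\omega^2\|u\|_{L^2}^2$ is much larger. What the paper actually has is the $L^2$ coercivity $\|u\|_{L^2}^2\le C\langle u,L_tu\rangle$ (Lemma~\ref{lem:elliptic_L_t}) together with $\|\nabla v\|_{L^2}^2\le C\langle u,L_tu\rangle$ for the \emph{smoothed} function $v=\frac{1}{w_t}\int R_t u$ (Lemma~\ref{lem:elliptic_v}), not for $u$ itself. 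The gradient of $u$ is then controlled by writing $u=v+\frac{t}{w_t}p$ from the integral equation and bounding $\|\nabla(\tfrac{t}{w_t}p)\|$ directly, as in \eqref{eq:est_dl2_boundary}. (Indeed, if your coercivity held, Theorem~\ref{thm:regularity} would improve to $\|u\|_{H^1}\le C\|r\|_{L^2}$ with no $t\|\nabla r\|$ term, which is not what that theorem says.) Your key inequality plugs in cleanly to the paper's closing mechanism, so the fix is just to replace the claimed $H^1$ coercivity with the representation-based argument via Lemmas~\ref{lem:elliptic_v} and~\ref{lem:elliptic_L_t}.
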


The proof of Theorem \ref{thm:regularity} can be found in \cite{SS14}. In next two subsections,
we will prove Theorem \ref{thm:regularity_boundary} and Theorem \ref{thm:integral_error} sequentially.

\subsection{Proof of Theorem \ref{thm:regularity_boundary}}
First, we need following four lemmas which have been proved in \cite{SS14}.
\begin{lemma} For any function $u\in L^2(\mathcal{M})$, 
there exist a constant $C>0$ independent on $t$ and $u$, such that
  \begin{eqnarray}
    \left<u,L_t u\right>_\M \ge C\int_\mathcal{M} |\nabla v|^2
\mathd \mu_\bx
  \end{eqnarray}
where $\left<f, g\right>_{\mathcal{M}} = \int_{\mathcal{M}} f(\bx)g(\bx)\mathd\mu_\bx$ for any $f, g\in L_2(\mathcal{M})$ and
\begin{eqnarray}
v(\bx)=\frac{C_t}{w_t(\bx)}\int_{\mathcal{M}}R\left(\frac{|\bx-\by|^2}{4t}\right) u(\by)\mathd \mu_\by, 
\label{eqn:smooth_v}
\end{eqnarray}
and $w_t(\bx) = C_t\int_{\mathcal{M}}R\left(\frac{|\bx-\by|^2}{4t}\right)\mathd \mu_\by$.
\label{lem:elliptic_v}
\end{lemma}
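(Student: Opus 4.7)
The plan is to bound the Dirichlet energy of the mollified function $v$ by the non-local quadratic form $\langle u,L_t u\rangle_\M$, which after symmetrization in $\bx \leftrightarrow \by$ takes the form
\begin{eqnarray*}
\langle u,L_t u\rangle_\M = \frac{1}{2t}\int_\M\int_\M R_t(\bx,\by)(u(\bx)-u(\by))^2 \mathd\mu_\by \mathd\mu_\bx.
\end{eqnarray*}
First I would derive a convenient formula for $\nabla v$. Starting from $w_t(\bx) v(\bx) = \int_\M R_t(\bx,\by) u(\by) \mathd\mu_\by$ and differentiating, using $\nabla w_t(\bx) = \int_\M \nabla_\bx R_t(\bx,\by) \mathd\mu_\by$, a short manipulation gives the key identity
\begin{eqnarray*}
\nabla v(\bx) = \frac{1}{w_t(\bx)}\int_\M \nabla_\bx R_t(\bx,\by)\bigl(u(\by)-v(\bx)\bigr) \mathd\mu_\by.
\end{eqnarray*}

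Next I would apply the Cauchy--Schwarz inequality to the right-hand side with weight $R_t(\bx,\by)$ to obtain
\begin{eqnarray*}
|\nabla v(\bx)|^2 \le \frac{1}{w_t(\bx)^2}\left(\int_\M \frac{|\nabla_\bx R_t(\bx,\by)|^2}{R_t(\bx,\by)} \mathd\mu_\by\right)\left(\int_\M R_t(\bx,\by)(u(\by)-v(\bx))^2 \mathd\mu_\by\right).
\end{eqnarray*}
The assumption $R(r)\ge \delta_0$ on $[0,1/2]$ combined with standard volume estimates on $\M$ yields the uniform lower bound $w_t(\bx)\ge c>0$ for sufficiently small $t$. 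For the kernel-ratio factor, a direct computation gives $|\nabla_\bx R_t|^2/R_t = (C_t/t)\cdot r|R'(r)|^2/R(r)$ with $r=|\bx-\by|^2/(4t)$, so using that $r|R'(r)|^2/R(r)$ is bounded on the support of $R$ together with $\mathrm{vol}(B_{2\sqrt t}\cap \M)\le Ct^{k/2}$ produces
\begin{eqnarray*}
\int_\M \frac{|\nabla_\bx R_t(\bx,\by)|^2}{R_t(\bx,\by)} \mathd\mu_\by \le \frac{C}{t}.
\end{eqnarray*}
Finally, since $v(\bx)$ is by definition the $R_t(\bx,\cdot)$-weighted mean of $u$, it is the minimizer of $a\mapsto \int_\M R_t(\bx,\by)(u(\by)-a)^2 \mathd\mu_\by$, so choosing $a=u(\bx)$ gives
\begin{eqnarray*}
\int_\M R_t(\bx,\by)(u(\by)-v(\bx))^2 \mathd\mu_\by \le \int_\M R_t(\bx,\by)(u(\by)-u(\bx))^2 \mathd\mu_\by.
\end{eqnarray*}
Substituting all three bounds and integrating in $\bx\in\M$ yields $\int_\M |\nabla v|^2 \mathd\mu_\bx \le C\langle u,L_t u\rangle_\M$, as required.

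The main obstacle is the uniform bound $\int_\M |\nabla_\bx R_t|^2/R_t \mathd\mu_\by \le C/t$. Its validity hinges on the quantity $r|R'(r)|^2/R(r)$ being bounded throughout the support of $R$, which is a statement about the decay of $R$ near the boundary of its support relative to the size of $R'$. Under the $C^2$ hypothesis together with the support and positivity assumptions in Section~\ref{sec:assum}, this can be established (for instance by choosing $R$ to vanish to sufficient order at $r=1$), and the remaining estimate near $\p\M$ is handled by observing that the volume of $B_{2\sqrt t}(\bx)\cap \M$ is still bounded by $Ct^{k/2}$ even for $\bx\in \p\M$. Modulo this kernel-regularity input and the uniform lower bound on $w_t$, the Cauchy--Schwarz/minimization argument outlined above gives the stated coercivity estimate with a constant depending only on $\M$ and $R$.
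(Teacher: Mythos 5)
Your argument is sound and represents the natural route to this coercivity estimate: symmetrizing $\langle u, L_t u\rangle_\M$ into $\frac{1}{2t}\int_\M\int_\M R_t(\bx,\by)(u(\bx)-u(\by))^2\,\mathd\mu_\by\,\mathd\mu_\bx$, rewriting $\nabla v(\bx)$ as a $R_t(\bx,\cdot)$-weighted average of $u(\by)-v(\bx)$, applying weighted Cauchy--Schwarz, and invoking the variance-minimizing property of the weighted mean to replace $v(\bx)$ by $u(\bx)$. The uniform lower bound on $w_t$ is exactly the content of Lemma~\ref{lem:bound_int_R_t}. The paper itself defers the proof of this lemma to \cite{SS14}, so there is no in-paper version to compare against.

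The one point you hedge on --- boundedness of $r|R'(r)|^2/R(r)$ on $\{R>0\}$, which you suggest may require choosing $R$ ``to vanish to sufficient order at $r=1$'' --- is in fact automatic from the stated assumptions, and no extra vanishing hypothesis is needed. Since $R$ is $C^2$ on $[0,\infty)$, nonnegative, and identically zero on $(1,\infty)$, one has $R'(1)=R''(1)=0$ and $M=\sup_{r\ge 0}|R''(r)|<\infty$. The classical Glaeser inequality then applies: Taylor's theorem gives $0\le R(r+h)\le q(h):=R(r)+R'(r)h+\frac{M}{2}h^2$ for all admissible $h$; evaluating at the vertex $h^*=-R'(r)/M$ (whenever $r+h^*\ge 0$) gives $0\le R(r)-|R'(r)|^2/(2M)$, i.e.\ $|R'(r)|^2\le 2M\,R(r)$. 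By assumption (ii) any zero of $R$ in its support lies in $[1/2,1]$, where $R'$ vanishes by nonnegativity and differentiability, so $|R'(r)|$ is small near such zeros and $r+h^*\ge 1/2 - |R'(r)|/M > 0$; away from the zeros the ratio is trivially controlled since $R$ is bounded below on compact sets at positive distance from $\{R=0\}$. Hence $r|R'(r)|^2/R(r)$ is bounded on $\{R>0\}$ with a constant depending only on $R$, and (noting that $\nabla_\bx R_t=0$ wherever $R_t=0$, so the ratio may be read as $0$ there) your Cauchy--Schwarz step closes without any additional hypothesis on $R$.
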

\begin{lemma}
Assume $\M$ and $\p\M$ are $C^\infty$. There exist a constant $C>0$ independent on $t$
so that for any function $u\in L_2(\M)$ with $\int_\M u = 0$ and for any sufficient small $t$
\begin{eqnarray}
\left<L_tu, u\right>_{\mathcal{M}} \geq C\|u\|_{L_2(\M)}^2
\end{eqnarray}
\label{lem:elliptic_L_t}
\end{lemma}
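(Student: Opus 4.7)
The strategy is to pair the integral equation with $u$ and extract a factor of $\sqrt t$ from the boundary-type source via integration by parts, using the coercivity estimates from Lemmas \ref{lem:elliptic_L_t} and \ref{lem:elliptic_v}. Since $\int_\M u\,\mathd\mu = 0$, the constant $\bar b$ disappears in the pairing, and the two lemmas give
\[
\|u\|_{L^2(\M)}^2 + \|\nabla v\|_{L^2(\M)}^2 \le C\langle L_t u, u\rangle_\M = C\langle r, u\rangle_\M,
\]
where $r(\bx) = \int_{\p\M}b_i(\by)\eta_i(\bx,\by)\rhk\,\mathd\tau_\by$ and $v$ is the smoothed function in \eqref{eqn:smooth_v}. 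The $H^1$ bound on $u$ itself is then recovered from $v$ via the algebraic identity $u = v + t(r-\bar b)/w_t$ obtained by rearranging $L_t u = r - \bar b$. The whole proof therefore reduces to establishing
\[
|\langle r,u\rangle_\M| \le C\,t^{1/2}\,\|\mathbf b\|_{H^1(\M)}\,\|u\|_{H^1(\M)},
\]
together with a parallel kernel estimate for $\|\nabla r\|_{L^2(\M)}$, after which Young's inequality applied to the resulting self-consistent inequality on $\|u\|_{H^1(\M)}$ gives the claim.

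The core step is integration by parts. Switching the order of integration yields $\langle r,u\rangle_\M = \int_{\p\M}b_i(\by)\Psi_i(\by)\,\mathd\tau_\by$ with $\Psi_i(\by) := \int_\M u(\bx)\eta_i(\bx,\by)\bar R_t(\bx,\by)\,\mathd\mu_\bx$. The crucial identity is that, defining $\bar{\bar R}(r) := \int_r^\infty \bar R(s)\,\mathd s$ and $\bar{\bar R}_t(\bx,\by) := C_t\bar{\bar R}(|\bx-\by|^2/4t)$, one has in the ambient Euclidean sense
\[
(x_i - y_i)\,\bar R_t(\bx,\by) = -2t\,\partial_{x_i}\bar{\bar R}_t(\bx,\by).
\]
Since $\eta_i = (x_i-y_i) + O(|\bx-\by|^2)$ and the kernel is supported on $\{|\bx-\by|\le 2\sqrt t\}$, one obtains
\[
\Psi_i(\by) = -2t\int_\M u\,\partial_{x_i}\bar{\bar R}_t\,\mathd\mu_\bx + \mathcal E_i(\by),\qquad |\mathcal E_i(\by)|\le Ct\int_\M |u|\bar R_t\,\mathd\mu_\bx.
\]
Integration by parts on $\M$ via the local parametrization $\Phi$, using \eqref{relation-gp} to convert the ambient $\partial_{x_i}$ into the manifold gradient $\nabla^i$, then gives
\[
\Psi_i(\by) = 2t\int_\M \nabla^i u\,\bar{\bar R}_t\,\mathd\mu_\bx - 2t\int_{\p\M} u\,n^i\,\bar{\bar R}_t\,\mathd\tau_\bx + \mathcal E_i(\by),
\]
producing the explicit factor of $t$.

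The rest is standard kernel arithmetic. Using the scalings $\int_\M\bar{\bar R}_t(\cdot,\by)\,\mathd\mu_\bx = O(1)$ and $\int_{\p\M}\bar{\bar R}_t(\bx,\cdot)\,\mathd\tau_\by \le Ct^{-1/2}$, successive applications of Cauchy--Schwarz yield
\[
\Big(\int_{\p\M}|\Psi_i(\by)|^2\,\mathd\tau_\by\Big)^{1/2} \le Ct^{3/4}\|\nabla u\|_{L^2(\M)} + Ct^{1/2}\|u\|_{L^2(\p\M)} + Ct^{3/4}\|u\|_{L^2(\M)},
\]
where the middle (slower) term comes from the boundary trace produced by the integration by parts. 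A final Cauchy--Schwarz in $\by$ combined with the trace inequalities $\|b\|_{L^2(\p\M)}\le C\|b\|_{H^1(\M)}$ and $\|u\|_{L^2(\p\M)}\le C\|u\|_{H^1(\M)}$ delivers $|\langle r,u\rangle_\M| \le Ct^{1/2}\|\mathbf b\|_{H^1(\M)}\|u\|_{H^1(\M)}$. A parallel (and easier) kernel estimate gives $\|\nabla r\|_{L^2(\M)} \le Ct^{-1/4}\|\mathbf b\|_{H^1(\M)}$, so that the correction $t(r-\bar b)/w_t$ contributes at most $Ct^{3/4}\|\mathbf b\|_{H^1(\M)}$ to $\|u\|_{H^1}$. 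Substituting into the coercivity inequality and absorbing by Young's inequality then finishes the proof.

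The main obstacle will be carrying out the integration by parts cleanly on a curved submanifold. The identity $(x_i-y_i)\bar R_t = -2t\,\partial_{x_i}\bar{\bar R}_t$ is ambient Euclidean, whereas the pairing lives on $\M$ with its induced metric, so one must carefully track the conversion between the ambient derivative $\partial_{x_i}$, the coordinate derivatives $\partial_{i'}$, and the manifold gradient $\nabla^i$ via \eqref{relation-gp}; the mismatch $\eta_i - (x_i-y_i) = O(|\bx-\by|^2)$ produces the remainder $\mathcal E_i$, whose smallness depends on the uniform reach of $\M$ and $\p\M$. Equally important is the boundary contribution $-2t\int_{\p\M} u\,n^i\bar{\bar R}_t\,\mathd\tau_\bx$ from the integration by parts, which is precisely the source of the trace term $\|u\|_{L^2(\p\M)}$ in the final estimate and is what limits the exponent of $t$ to one half rather than to a full $t$.
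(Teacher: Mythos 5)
Your proposal does not prove the statement in question. The lemma to be established is the coercivity (nonlocal Poincar\'e) inequality $\left<L_tu,u\right>_{\M}\ge C\|u\|_{L^2(\M)}^2$ for mean-zero $u$. What you have written is instead a proof sketch of the stability estimate $\|u\|_{H^1(\M)}\le C\sqrt{t}\,\|\mathbf{b}\|_{H^1(\M)}$ for the integral equation with boundary-type source (Theorem~\ref{thm:regularity_boundary}): the integration by parts via $\bar{\bar{R}}_t$, the extraction of the factor $\sqrt{t}$ from the boundary term, and the final Young-inequality bootstrap are all steps of that theorem's proof, and your stated conclusion is that theorem's conclusion, not the lemma's. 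Worse, your very first sentence invokes ``the coercivity estimates from Lemmas~\ref{lem:elliptic_L_t} and~\ref{lem:elliptic_v}'' as inputs --- that is, you assume the inequality you were asked to prove. As a proof of Lemma~\ref{lem:elliptic_L_t} the argument is therefore circular and establishes nothing.

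A correct argument (the one in \cite{SS14}, to which the paper defers) runs along entirely different lines and uses no boundary source at all. Write $\left<L_tu,u\right>_\M=\frac{1}{2t}\int_\M\int_\M R_t(\bx,\by)(u(\bx)-u(\by))^2\,\mathd\mu_\bx\mathd\mu_\by$ by symmetrizing. With $v$ the smoothed function of \eqref{eqn:smooth_v}, Cauchy--Schwarz and the lower bound $w_t\ge w_{\min}$ from Lemma~\ref{lem:bound_int_R_t} give $\|u-v\|_{L^2(\M)}^2\le Ct\left<L_tu,u\right>_\M$, while Lemma~\ref{lem:elliptic_v} gives $\|\nabla v\|_{L^2(\M)}^2\le C\left<L_tu,u\right>_\M$. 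The classical Poincar\'e inequality on $\M$ controls $\|v-\bar v\|_{L^2(\M)}$ by $\|\nabla v\|_{L^2(\M)}$, and since $\int_\M u=0$ the mean $\bar v$ equals the mean of $v-u$ and is controlled by $\|u-v\|_{L^2(\M)}$. Combining the three pieces via the triangle inequality yields $\|u\|_{L^2(\M)}^2\le C\left<L_tu,u\right>_\M$. None of the machinery in your proposal (the identity $(x_i-y_i)\bar{R}_t=-2t\,\p_{x_i}\bar{\bar{R}}_t$, the trace estimates, the self-consistent inequality in $\|u\|_{H^1}$) is needed here, and none of it substitutes for the missing Poincar\'e-type step.
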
 

\begin{lemma}
Assume both $\M$ and $\p \M$ are $C^\infty$ smooth. For sufficiently small $t$ and any $\bx\in \M$, 
there are constants $w_{\min}>0, w_{\max}>0$ depending only on the geometry of $\M, \p\M$,  
so that $$w_{\min} \leq \int_\M R_t(\bx, \by) \mathd \mu_\by \leq w_{\max}$$ 
\label{lem:bound_int_R_t}
\end{lemma}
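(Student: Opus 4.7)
The plan is to control $w_t(\bx) = \int_\M R_t(\bx, \by)\mathd\mu_\by$ from above and below by Euclidean volumes $|B(\bx, r) \cap \M|$ with $r = O(\sqrt{t})$, using the compact support of $R$ together with the positive reach of both $\M$ and $\p\M$.

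First, I would establish the upper bound. Because $R$ vanishes outside $[0,1]$ and is $C^2$, it is bounded by some $R_{\max}$, and $R_t(\bx,\by) = 0$ whenever $|\bx-\by| > 2\sqrt{t}$, so
\begin{equation*}
w_t(\bx) \;\leq\; C_t\,R_{\max}\, \bigl|B(\bx, 2\sqrt{t}) \cap \M\bigr|.
\end{equation*}
Since $\M$ is compact with positive reach, the standard comparison via orthogonal projection onto the tangent plane at $\bx$ yields $|B(\bx,r)\cap\M| \leq C_1 r^k$ uniformly in $\bx$ for $r$ smaller than half the reach. Substituting $r = 2\sqrt{t}$ and using $C_t = (4\pi t)^{-k/2}$, the powers of $t$ cancel and we obtain a bound $w_{\max}$ independent of $t$ and $\bx$.

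For the lower bound, property (ii) gives $R_t(\bx,\by) \geq C_t \delta_0$ whenever $|\bx-\by|^2 < 2t$, hence
\begin{equation*}
w_t(\bx) \;\geq\; C_t\,\delta_0\, \bigl|B(\bx, \sqrt{2t}) \cap \M\bigr|.
\end{equation*}
The substantive content is then to produce constants $c, r_0 > 0$, depending only on $\M$ and $\p\M$, such that $|B(\bx, r) \cap \M| \geq c\,r^k$ for every $\bx \in \M$ and every $r \leq r_0$; granted this, the powers of $t$ cancel again and we obtain the lower bound $w_{\min}$.

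The main obstacle is this uniform lower volume bound, where the delicate case is $\bx$ near $\p\M$. For $\bx$ at distance greater than $r$ from $\p\M$, the positive reach of $\M$ ensures that orthogonal projection onto $T_\bx\M$ is bi-Lipschitz with constants close to $1$ on $B(\bx,r)\cap\M$, giving $|B(\bx,r)\cap\M| \geq \tfrac{1}{2}|B_{\R^k}(0,r)|$ for small enough $r$. For $\bx$ within distance $r$ of $\p\M$, I would exploit that $\p\M$ is itself $C^\infty$ smooth with positive reach, and work in a tubular-neighborhood chart in which $\M$ is bi-Lipschitz equivalent to a Euclidean half-space; then $B(\bx,r)\cap\M$ contains a region comparable to a Euclidean half-ball, which still has volume at least $c\,r^k$. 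A finite covering of $\M$ by such charts together with its compactness yields a single uniform constant $c>0$, completing the argument.
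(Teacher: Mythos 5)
Your proposal is correct, and it is the natural argument for this lemma. The paper itself does not prove the lemma (it cites it from the companion paper [SS14]), so there is no in-text proof to compare against, but your route — using that $R_t$ is supported in $|\bx-\by|\le 2\sqrt t$ and bounded below by $C_t\delta_0$ on $|\bx-\by|\le\sqrt{2t}$, so that the problem reduces to two-sided bounds $c\,r^k\le|B(\bx,r)\cap\M|\le C\,r^k$ for $r=O(\sqrt t)$, with the normalization $C_t=(4\pi t)^{-k/2}$ cancelling the powers of $t$ — is exactly the kind of argument [SS14] uses via its local bi-Lipschitz parametrizations (the paper's Proposition 6.1). The one place you should be explicit when writing it out is the lower volume bound near $\p\M$: the point is that $\M$ has boundary, so a metric ball centered on $\p\M$ intersects $\M$ in only roughly a half-ball, but the half-ball still carries volume $\tfrac12\omega_k r^k(1+o(1))$, which suffices; your tubular-neighborhood/half-space comparison handles this, and compactness of $\M$ gives uniformity of the constants and of the admissible range $r\le r_0$.
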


\begin{lemma}
Assume both $\M$ and $\p \M$ are $C^\infty$ smooth. For sufficiently small $t$ and any $\bx\in \R^d$, 
there is a constant $C$ depending only on the geometry of $\M, \p\M$, so that
for any integer $s\geq 0$ and $K_t(\bx,\by)=R_t(\bx, \by), \bar{R}_t(\bx, \by), \bar{\bar{R}}_t(\bx, \by)$, 
\begin{eqnarray*}
&&\int_{\M} K_t(\bx, \by) |\bx-\by|^s \mathd \mu_\by \leq Ct^{s/2}, \quad 
\int_{\p\M} K_t(\bx, \by) |\bx-\by|^s \mathd \mu_\by \leq Ct^{(s-1)/2}.
\end{eqnarray*}
where $\bar{\bar{R}}_t(\bx,\by)=C_t\bar{\bar{R}}\left(\frac{\|\bx-\by\|^2}{4t}\right)$ and $\bar{\bar{R}}(r)=\int_r^\infty\bar{R}(s)\mathd s$.
\label{cor:upper_bound_int_R_t}
\end{lemma}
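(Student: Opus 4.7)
The heart of the matter is that all three kernels $R_t, \bar{R}_t, \bar{\bar{R}}_t$ are uniformly bounded multiples of the normalizing constant $C_t = (4\pi t)^{-k/2}$ and are compactly supported in the region $|\bx-\by|^2 \le 4t$. Indeed, assumption (i) gives $R(r)=0$ for $r>1$; since $\bar{R}(r)=\int_r^\infty R(s)\,ds$ and $\bar{\bar{R}}(r)=\int_r^\infty \bar{R}(s)\,ds$, both $\bar{R}$ and $\bar{\bar{R}}$ also vanish for $r>1$ and are bounded on $[0,1]$ (since $R\in C^2$ with compact support). Thus there is a constant $M$ independent of $t$ with $|K_t(\bx,\by)| \le M\, C_t\, \chi_{\{|\bx-\by|^2 \le 4t\}}$ for each of the three kernels.

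With this pointwise bound, the integrand on the left-hand side is crudely bounded by
\begin{equation*}
K_t(\bx,\by)|\bx-\by|^s \;\le\; M C_t (4t)^{s/2}\, \chi_{\{|\bx-\by|\le 2\sqrt{t}\}} \;\le\; C\, t^{(s-k)/2}\,\chi_{\{|\bx-\by|\le 2\sqrt{t}\}},
\end{equation*}
so the task reduces to estimating the volumes
\begin{equation*}
V_\M(\bx,r) := |\M \cap B_{\R^d}(\bx,r)|_k, \qquad V_{\p\M}(\bx,r) := |\p\M \cap B_{\R^d}(\bx,r)|_{k-1}
\end{equation*}
for $r = 2\sqrt{t}$, where $|\cdot|_j$ denotes $j$-dimensional Hausdorff measure. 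The desired bounds then follow from $V_\M(\bx,r)\le C r^k$ and $V_{\p\M}(\bx,r) \le C r^{k-1}$ respectively, giving $t^{(s-k)/2}\cdot t^{k/2} = t^{s/2}$ and $t^{(s-k)/2}\cdot t^{(k-1)/2} = t^{(s-1)/2}$.

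The main step is therefore the volume bound. I will use the positive reach of $\M$ and of $\p\M$ (already asserted in Section~\ref{sec:assum}): let $\tau>0$ be a lower bound for both reaches. For $r \le \tau/2$, the nearest-point projection from $B_{\R^d}(\bx,r)$ onto the tangent space $T_{\bx_0}\M$ at a nearest point $\bx_0 \in \M$ to $\bx$ (when $\M\cap B(\bx,r)\ne\emptyset$) is bi-Lipschitz with constants depending only on $\tau$; this is a standard consequence of Federer's reach theory. Pulling back to this tangent space shows $V_\M(\bx,r)\le C \omega_k r^k$ with $C$ depending only on $\tau$ and $k$. The same argument applied to $\p\M$, which is $(k-1)$-dimensional with its own positive reach, yields $V_{\p\M}(\bx,r)\le C \omega_{k-1} r^{k-1}$. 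Since the lemma assumes $t$ is sufficiently small, we can ensure $2\sqrt{t}\le \tau/2$, after which the volume estimates apply uniformly in $\bx\in\R^d$.

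The only real obstacle is the volume bound, and even that is classical once positive reach is in hand; everything else is mechanical. Accordingly I would present the argument as: (a) invoke the support and boundedness of $R,\bar{R},\bar{\bar{R}}$ to reduce to an indicator on $B(\bx,2\sqrt{t})$; (b) state the volume estimate as a brief lemma citing reach theory; (c) plug in and collect powers of $t$. Note that the boundary estimate is stated uniformly in $\bx\in\R^d$, so one should handle the case $B(\bx,2\sqrt{t})\cap\p\M=\emptyset$ (integral is $0$, trivially bounded) and the case it is nonempty (apply the volume bound at a nearest point in $\p\M$) separately; the same bifurcation works for $\M$.
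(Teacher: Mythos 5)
Your argument is correct. The paper does not prove this lemma itself but cites it from \cite{SS14}, so there is no in-paper proof to compare against; nonetheless your route is the natural one and almost certainly the one used there: all three kernels are bounded by $MC_t$ and supported in $\{|\bx-\by|\le 2\sqrt t\}$ (since $R$, hence $\bar R$ and $\bar{\bar R}$, vanish for $r>1$ and are bounded on $[0,1]$), the factor $|\bx-\by|^s$ contributes $(2\sqrt t)^s$ on this support, and the uniform volume bounds of $\M\cap B(\bx,r)$ and $\p\M\cap B(\bx,r)$ by $Cr^k$ and $Cr^{k-1}$ respectively finish the estimate; these volume bounds follow from positive reach, or equivalently from the bi-Lipschitz local parametrizations (Proposition 6.1 of \cite{SS14}) that the paper already invokes. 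One minor wording fix: the map you want to be bi-Lipschitz is the \emph{orthogonal} projection of $\M\cap B(\bx_0,2r)$ onto the tangent plane $T_{\bx_0}\M$, for $r$ small relative to the reach, rather than the Federer nearest-point projection of the ambient ball $B(\bx,r)$; as written your sketch conflates the two, though the intended (and correct) statement is clear, and your move of recentering the ball at a nearest point $\bx_0\in\M$ (resp.\ $\p\M$) is exactly what makes the bound uniform in $\bx\in\R^d$.
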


Now, we are ready to prove Theorem \ref{thm:regularity_boundary}.
\begin{proof} {\it of Theorem \ref{thm:regularity_boundary}}

The key point is to show that
  \begin{eqnarray}
\label{eq:est_boundary_whole}
    \left|\int_\M u(\bx)\left(\int_{\p\M}b_i(\by)\eta^j\bar{R}_t(\bx,\by)
\mathd \tau_\by-\bar{b}\right)\mathd \mu_\bx\right|\le C\sqrt{t} \;\|\mathbf{b}(\by)\|_{H^1(\M)} \|u\|_{H^1(\M)}
  \end{eqnarray}
Notice that
$$|\bar{b}|=\frac{1}{|\M|}\left|\int_\M \int_{\p\M}b_i(\by)\eta_i(\bx,\by)\rhk\mathd \tau_\by\mathd\bx\right|\le 
C\sqrt{t}\;\|\mathbf{b}(\by)\|_{L^2(\p\M)}\le C\sqrt{t}\;\|\mathbf{b}(\by)\|_{H^1(\M)}$$.
Then it is enough to show that
  \begin{eqnarray}
\label{eq:est_boundary}
    \left|\int_\M u(\bx)\left(\int_{\p\M}b_i(\by)\eta^j\bar{R}_t(\bx,\by)
\mathd \tau_\by\right)\mathd \mu_\bx\right|\le C\sqrt{t} \;\|\mathbf{b}(\by)\|_{H^1(\M)} \|u\|_{H^1(\M)}
  \end{eqnarray}
First, we have
\begin{eqnarray}
  &&|2t\nabla^j\rrhk+\eta^j\bar{R}_t(\bx,\by)|\le C|\xi|^2\rhk
\end{eqnarray}
where $\rrhk=C_t\bar{\bar{R}}\left(\frac{\|\bx-\by\|^2}{4t}\right)$ and $\bar{\bar{R}}(r)=\int_{r}^{\infty}\bar{R}(s)\mathd s$.

This would gives us that
\begin{eqnarray}
  &&\left|\int_\M u(\bx)\int_{\p\M}b_i(\by)\left(\eta^j\bar{R}_t(\bx,\by)+2t\nabla_i\rrhk\right)
\mathd \tau_\by\mathd \mu_\bx\right|\nonumber\\
&\le & C\int_\M |u(\bx)|\int_{\p\M}|b_i(\by)||\xi|^2\rhk\mathd \tau_\by\mathd \mu_\bx\nonumber\\
&\le &Ct \int_\M |u(\bx)|\int_{\p\M}|b_i(\by)|\rhk\mathd \tau_\by\mathd \mu_\bx\nonumber\\
&\le &Ct \int_{\p\M}|b_i(\by)| \int_\M |u(\bx)|\rhk\mathd \tau_\bx\mathd \mu_\by\nonumber\\
&\le &Ct\|\mathbf{b}(\by)\|_{L^2(\p \M)}\left(\int_{\p\M}\left(\int_\M\rhk\mathd\mu_\bx\right)
\left(\int_\M |u(\bx)|^2\rhk \mathd \mu_\bx\right)\mathd \tau_\by\right)^{1/2}\nonumber\\
&\le & Ct\|\mathbf{b}(\by)\|_{H^1(\M)}\left(\int_{\M} |u(\bx)|^2
\left(\int_{\p\M} \rhk \mathd \tau_\by\right)\mathd \mu_\bx\right)^{1/2}\nonumber\\
&\le & Ct^{3/4}\|\mathbf{b}(\by)\|_{H^1(\M)}\|u\|_{L^2(\M)}
\label{eq:est_boundary_1}
\end{eqnarray}
On the other hand, using Guass integral formula, we have
 \begin{eqnarray}
&&    \int_\M u(\bx)\int_{\p\M}b_i(\by)\nabla_i\rrhk\mathd \tau_\by\mathd \mu_\bx\nonumber\\
&=& \int_{\p\M} b_i(\by)\int_{\M}u(\bx)\nabla_i\rrhk\mathd \mu_\bx\mathd \tau_\by\nonumber\\
&=&\int_{\p\M} b_i(\by)\int_{\p\M}n_i(\bx)u(\bx)\rrhk\mathd \tau_\bx\mathd \tau_\by
-\int_{\p\M} \int_{\M}\text{div}_\bx[b_i(\by)u(\bx)]\rrhk\mathd \mu_\bx\mathd \tau_\by.
\label{eq:gauss_boundary}
  \end{eqnarray}
For the first term, we have
  \begin{eqnarray}
&&    \left|\int_{\p\M} b_i(\by)\int_{\p\M}n_i(\bx)u(\bx)\rrhk\mathd \tau_\bx\mathd \tau_\by\right|\nonumber\\
&\le &  \int_{\p\M}|b_i(\by)| \int_{\p\M}|u(\bx)|\rrhk\mathd \tau_\bx\mathd \tau_\by\nonumber\\
&\le &C\|\mathbf{b}(\by)\|_{L^2(\p \M)} \left(\int_{\p\M} \left(\int_{\p\M}|u(\bx)|\rrhk\mathd \tau_\bx\right)^2
\mathd \tau_\by\right)^{1/2}\nonumber\\
&\le &C\|\mathbf{b}(\by)\|_{H^1( \M)} \left(\int_{\p\M} \left(\int_{\p\M}\rrhk\mathd \tau_\bx\right)
 \left(\int_{\p\M}|u(\bx)|^2\rrhk\mathd \tau_\bx\right)
\mathd \tau_\by\right)^{1/2}\nonumber\\
&\le &Ct^{-1/4}\; \|\mathbf{b}(\by)\|_{H^1( \M)} \left(\int_{\p\M} 
 \left(\int_{\p\M}\rrhk\mathd \tau_\by\right)|u(\bx)|^2
\mathd \tau_\bx\right)^{1/2}\nonumber\\
&\le &Ct^{-1/2}\; \|\mathbf{b}(\by)\|_{H^1( \M)}\|u\|_{L^2(\p\M)}\nonumber\\
&\le &Ct^{-1/2}\; \|\mathbf{b}(\by)\|_{H^1( \M)}\|u\|_{H^1(\M)}.
\label{eq:est_boundary_2}
  \end{eqnarray}
We can also bound the second term of \eqref{eq:gauss_boundary} 
  \begin{eqnarray}
&&    \left|\int_{\p\M} \int_{\M}\text{div}_\bx[b_i(\by)u(\bx)]\rrhk\mathd \mu_\bx\mathd \tau_\by\right|\nonumber\\
&\le &C\|\mathbf{b}(\by)\|_{L^2(\p \M)} \int_{\p\M} \int_{\M}|\nabla u(\bx)|\rrhk\mathd \mu_\bx\mathd \tau_\by\nonumber\\
&\le &C \|\mathbf{b}(\by)\|_{H^1( \M)} \left(\int_{\p\M} \left(\int_\M \rrhk \mathd\mu_\bx\right)
\left(\int_{\M}|\nabla u(\bx)|^2\rrhk\mathd \mu_\bx\right)\mathd \tau_\by\right)^{1/2}\nonumber\\
&\le &C \|\mathbf{b}(\by)\|_{H^1( \M)} \left(\int_{\M} |\nabla u(\bx)|^2
\left(\int_{\p\M}\rrhk\mathd \tau_\by\right)\mathd \mu_\bx\right)^{1/2}\nonumber\\
&\le &C t^{-1/4}\;\|\mathbf{b}(\by)\|_{H^1( \M)} \|u\|_{H^1(\M)}.
\label{eq:est_boundary_3}
  \end{eqnarray}
Then, the inequality \eqref{eq:est_boundary} is obtained by \eqref{eq:est_boundary_1},
\eqref{eq:gauss_boundary}, \eqref{eq:est_boundary_2} and \eqref{eq:est_boundary_3}.

Now, using Lemma \ref{lem:elliptic_L_t}, we have
  \begin{eqnarray}
    \|u\|_{L^2(\M)}^2\le C \left<u,L_tu\right>=C\left|\int_\M u(\bx)\int_{\p\M}b_i(\by)\eta^j\bar{R}_t(\bx,\by)
\mathd \tau_\by\mathd \mu_\bx\right|\le C\sqrt{t}\;\|\mathbf{b}(\by)\|_{H^1( \M)} \|u\|_{H^1(\M)}.
\label{eq:est_l2_boundary}
  \end{eqnarray}
Denote $p(\bx)=\int_{\p\M}b_i(\by)\eta^j\bar{R}_t(\bx,\by)\mathd \tau_\by$, then direct calculation would give us that
\begin{eqnarray}
  \|p(\bx)\|_{L^2(\M)}&\le& Ct^{1/4}\|\mathbf{b}(\by)\|_{H^1( \M)},\\
  \|\nabla p(\bx)\|_{L^2(\M)}&\le& Ct^{-1/4}\|\mathbf{b}(\by)\|_{H^1( \M)}.
\end{eqnarray}
The integral equation $L_t u=p$ gives that
\begin{eqnarray}
u(\bx)=v(\bx)+\frac{t}{w_t(\bx)}\,p(\bx)
\end{eqnarray}
where
\begin{eqnarray}
  v(\bx)=\frac{1}{w_t(\bx)}\int_{\M}R_t(\bx,\by)u(\by)\mathd\mu_\by,\quad w_t(\bx)=\int_{\M}R_t(\bx,\by)\mathd\mu_\by
\end{eqnarray}
Then by Lemma \ref{lem:elliptic_v}, we have
\begin{eqnarray}
\|\nabla u\|_{L^2(\M)}^2&\le&   2\|\nabla v\|_{L^2(\M)}^2+ 2t^2\left\|\nabla \left(\frac{p(\bx)}{w_t(\bx)}\right)\right\|_{L^2(\M)}^2\nonumber\\
&\le & C \left<u,L_tu\right> + Ct\|p\|_{L^2(\M)}^2+Ct^2\|\nabla p\|_{L^2(\M)}^2\nonumber\\
&\le& C\sqrt{t}\;\|\mathbf{b}(\by)\|_{H^1( \M)} \|u\|_{H^1(\M)}+ Ct\|p\|_{L^2(\M)}^2+Ct^2\|\nabla p\|_{L^2(\M)}^2\nonumber\\
&\le& C\|\mathbf{b}(\by)\|_{H^1( \M)} \left(\sqrt{t}\|u\|_{H^1(\M)}+ Ct^{3/2}\right).
\label{eq:est_dl2_boundary}
\end{eqnarray}
Using \eqref{eq:est_l2_boundary} and \eqref{eq:est_dl2_boundary}, we have
\begin{eqnarray}
  \|u\|_{H^1(\M)}^2\le C\|\mathbf{b}(\by)\|_{H^1( \M)}\left(\sqrt{t}\|u\|_{H^1(\M)}+ Ct^{3/2}\right)
\end{eqnarray}
which proves the theorem.
\end{proof}


\subsection{Proof of Theorem \ref{thm:integral_error}}
\label{sec:integral_error}


\begin{proof}
Let $r(\bx)=L_t u-L_tu_t$, then we have
\begin{eqnarray}
  r(\bx)&=&-\frac{1}{t}\int_{\M}R_t(\bx,\by)(u(\bx)-u(\by))\mathd\mu_\by+2\int_{\p\M}\bar{R}_t(\bx,\by)g(\by)\mathd\tau_\by-\int_{\M}\bar{R}_t(\bx,\by)f(\by)\mathd\mu_\by
\nonumber\\
&=&-\frac{1}{t}\int_{\M}R_t(\bx,\by)(u(\bx)-u(\by))\mathd\mu_\by+\int_{\M}\bar{R}_t(\bx,\by)\Delta_\M u(\by)\mathd \mu_\by
\nonumber\\
&&+\frac{1}{t}\int_{\M}(\bx-\by)\cdot \nabla u(\by)R_t(\bx,\by)\mathd \mathd \mu_\by\nonumber\\
&=& -\frac{1}{t}\int_{\M}R_t(\bx,\by)(u(\bx)-u(\by)-(\bx-\by)\cdot \nabla u(\by))\mathd\mu_\by+\int_{\M}\bar{R}_t(\bx,\by)\Delta_\M u(\by)\mathd \mu_\by
\end{eqnarray}
Here we use that fact that $u$ is the solution of the Poisson equation with Newmann boundary condition \eqref{eqn:neumann}, such that
\begin{eqnarray}
  \int_{\p\M}\bar{R}_t(\bx,\by)g(\by)\mathd\tau_\by&=&\int_{\p\M}\bar{R}_t(\bx,\by)\frac{\p u}{\p \bn}(\by)\mathd\tau_\by\nonumber\\
&=&\int_{\M}\bar{R}_t(\bx,\by)\Delta_\M u(\by)\mathd \mu_\by+\int_{\M}\nabla_\by \bar{R}_t(\bx,\by)\nabla u(\by)\mathd \mu_\by\nonumber\\
&=&\int_{\M}\bar{R}_t(\bx,\by)\Delta_\M u(\by)\mathd \mu_\by+\frac{1}{t}\int_{\M}(\bx-\by)\cdot \nabla u(\by)R_t(\bx,\by)\mathd \mathd \mu_\by,\nonumber
\end{eqnarray}
and
\begin{eqnarray}
  \int_{\M}\bar{R}_t(\bx,\by)f(\by)\mathd\mu_\by=\int_{\M}\bar{R}_t(\bx,\by)\Delta_\M u(\by)\mathd \mu_\by.\nonumber
\end{eqnarray}

Denote
\begin{eqnarray}
  \mathcal{B}_{\bx}^r=\left\{\by\in \mathcal{M}: |\bx-\by|\le r\right\},\quad \mathcal{M}_{\bx}^t=\left\{\by\in \mathcal{M}: |\bx-\by|^2\le 32t\right\}
\end{eqnarray}
where $r>0$ is a positive number which is small enough.

Since the manifold $\mathcal{M}$ is compact, there exists $\bx_i\in \mathcal{M},\;i=1,\cdots,N$ such that
\begin{eqnarray}
  \mathcal{M}\subset \bigcup_{i=1}^N \mathcal{B}_{\bx_i}^r
\end{eqnarray}
By Proposition 6.1 in \cite{SS14}, we have there exist
a parametrization $\Phi_i: \Omega_i\subset\mathbb{R}^k \rightarrow U_i\subset \mathcal{M},; i=1,\cdots, N$, such that
\begin{itemize}
\item[1.] $\mathcal{B}_{\bx_i}^{2r}\subset U_i$ and $\Omega_i$ is convex.
\item[2.] $\Phi\in C^3(\Omega)$;
\item[3.] For any points $\bx, \by\in \Omega$, $\frac{1}{2}\left|\bx-\by\right| \leq \left\|\Phi_i(\bx)-\Phi_i(\by)\right\|  \leq 2\left|\bx-\by\right|$.
\end{itemize}

Moreover, we denote
$\Phi(\beta)=\bx, \Phi(\al)=\by, \xi=\beta-\al, \eta=\xi^i\p_i\Phi(\al)$,
and
\begin{eqnarray}
r_1(\bx)&=&-\frac{1}{t}\int_{\M}R_t(\bx,\by)\left(u(\bx)-u(\by)-(\bx-\by)\cdot \nabla u(\by)-\frac{1}{2}\eta^i\eta^j(\nabla^i\nabla^j u(\by))\right)\mathd\mu_\by\nonumber\\
r_2(\bx)&=&\frac{1}{2t}\int_{\M}R_t(\bx,\by)\eta^i\eta^j(\nabla^i\nabla^j u(\by))\mathd\mu_\by-\int_\M \eta^i(\nabla^i\nabla^j u(\by)\nabla^j\bar{R}_t(\bx,\by)\mathd \mu_\by\nonumber\\
r_3(\bx)&=& \int_\M \eta^i(\nabla^i\nabla^j u(\by)\nabla^j\bar{R}_t(\bx,\by)\mathd \mu_\by+\int_\M \mbox{div} \; \left(\eta^i(\nabla^i\nabla^j u(\by)\right)
\bar{R}_t(\bx,\by)\mathd \mu_\by\nonumber\\
r_4(\bx)&=&\int_\M \mbox{div} \; \left(\eta^i(\nabla^i\nabla^j u(\by)\right)\bar{R}_t(\bx,\by)\mathd \mu_\by+ \int_{\M}\bar{R}_t(\bx,\by)\Delta_\M u(\by)\mathd \mu_\by.\nonumber
\nonumber\\
\end{eqnarray}
then
\begin{eqnarray}
 r(\bx)=r_1(\bx)-r_1(\bx)-r_3(\bx)+r_4(\bx).
\end{eqnarray}
Next, we will prove the theorem by estimating above four terms one by one.

First, let us consider $r_1$. To simplify the notation, let
$$d(\bx,\by)=u(\bx)-u(\by)-(\bx-\by)\cdot \nabla u(\by)-\frac{1}{2}\eta^i\eta^j(\nabla^i\nabla^j u(\by)).$$
Then, we have
\begin{eqnarray}
  \int_\M |r_1(\bx)|^2\mathd\mu_\bx&=&  \int_\M \left|\int_\M R_t(\bx,\by)d(\bx,\by)\mathd \mu_\by\right|^2\mathd\mu_\bx\nonumber\\
&\le & \int_\M \left(\int_\M R_t(\bx,\by)\mathd \mu_\by\right)\left(\int_\M R_t(\bx,\by)|d(\bx,\by)|^2\mathd \mu_\by\right)\mathd\mu_\bx\nonumber\\
&\le & C\int_\M \int_\M R_t(\bx,\by)|d(\bx,\by)|^2\mathd \mu_\by\mathd\mu_\bx\nonumber\\
&\le& C\sum_{i=1}^N \int_\M\int_{B_{\bx_i}^r}R_t(\bx,\by)|d(\bx,\by)|^2\mathd \mu_\by\mathd\mu_\bx\nonumber\\
&=&C\sum_{i=1}^N \int_{B_{\bx_i}^{2r}}\int_{B_{\bx_i}^r}R_t(\bx,\by)|d(\bx,\by)|^2\mathd \mu_\by\mathd\mu_\bx\nonumber\\
&=&C\sum_{i=1}^N \int_{B_{\bx_i}^{r}}\left(\int_{\M_{\by}^t}R_t(\bx,\by)|d(\bx,\by)|^2\mathd \mu_\bx\right)\mathd\mu_\by.
\end{eqnarray}
Using the fact that $\Omega_i$ is convex and the Newton-Leibniz formula, we can get
\begin{eqnarray}
  d(\bx,\by)&=&u(\bx)-u(\by)-(\bx-\by)\cdot \nabla u(\by)-\frac{1}{2}\eta^i\eta^j(\nabla^i\nabla^j u(\by))\nonumber\\
&=&\xi^{i}\xi^{i'}\int_0^1\int_0^1\int_0^1s_1\frac{d}{d s_3}\left(\p_{i}\Phi^j(\al+s_3 s_1\xi)\p_{i'}\Phi^{j'}(\al+s_3s_2 s_1\xi)\nabla^{j'}\nabla^ju(\Phi(\al+s_3s_2s_1 \xi))\right)
\mathd s_3\mathd s_2\mathd s_1\nonumber\\
&=&\xi^{i}\xi^{i'}\xi^{i''}\int_0^1\int_0^1\int_0^1s_1^2s_2\p_{i}\Phi^j(\al+s_3 s_1\xi)\p_{i''}\p_{i'}\Phi^{j'}(\al+s_3s_2 s_1\xi)\nabla^{j'}\nabla^ju(\Phi(\al+s_3s_2s_1 \xi))
\mathd s_3\mathd s_2\mathd s_1\nonumber\\
&&+\xi^{i}\xi^{i'}\xi^{i''}\int_0^1\int_0^1\int_0^1s_1^2\p_{i''}\p_{i}\Phi^j(\al+s_3 s_1\xi)\p_{i'}\Phi^{j'}(\al+s_3s_2 s_1\xi)\nabla^{j'}\nabla^ju(\Phi(\al+s_3s_2s_1 \xi))
\mathd s_3\mathd s_2\mathd s_1\nonumber\\
&&+\xi^{i}\xi^{i'}\xi^{i''}\int_0^1\int_0^1\int_0^1s_1^2s_2\p_{i}\Phi^j(\al+s_3s_2 s_1\xi)\p_{i'}\Phi^{j'}(\al+s_3s_2 s_1\xi)\p_{i''}\Phi^{j''}(\al+s_3s_2 s_1\xi)\nonumber\\
&&\hspace{4cm}\nabla^{j''}\nabla^{j'}\nabla^ju(\Phi(\al+s_3s_2s_1 \xi))\mathd s_3\mathd s_2\mathd s_1\nonumber
\end{eqnarray}
Using this equality and $\Phi\in C^{3}(\Omega)$, it is easy to show that
\begin{eqnarray}
&&  \int_{B_{\bx_i}^{r}}\left(\int_{\M_{\by}^t}R_t(\bx,\by)|d(\bx,\by)|^2\mathd \mu_\bx\right)\mathd\mu_\by\nonumber\\
&\le & Ct^3 \int_0^1\int_0^1\int_0^1\int_{B_{\bx_i}^{r}}\int_{\M_{\by}^t}R_t(\bx,\by)\left|D^{2,3}u(\Phi_i(\al+s_3s_2s_1 \xi))\right|^2\mathd \mu_\bx\mathd\mu_\by
\mathd s_3\mathd s_2\mathd s_1\nonumber\\
&\le & Ct^3 \max_{0\le s\le 1}\int_{B_{\bx_i}^{r}}\int_{\M_{\by}^t}R_t(\bx,\by)\left|D^{2,3}u(\Phi_i(\al+s \xi))\right|^2\mathd \mu_\bx\mathd\mu_\by
\end{eqnarray}
where
\begin{eqnarray*}
  \left|D^{2,3}u(\bx)\right|^2=\sum_{j,j',j''=1}^n|\nabla^{j''}\nabla^{j'}\nabla^ju(\bx)|^2
+\sum_{j,j'=1}^n|\nabla^{j'}\nabla^ju(\bx)|^2.
\end{eqnarray*}

Let $\bz_i=\Phi_i(\al+s \xi),\; 0\le s\le 1$, then for any $\by\in B_{\bx_i}^r$ and $\bx\in \M_{\by}^t$,
\begin{eqnarray*}
  |\bz_i-\by|\le 2s|\xi|\le 4s|\bx-\by|\le 8s\sqrt{t},\quad |\bz_i-\bx_i|\le |\bz_i-\by|+|\by-\bx_i|\le r+8s\sqrt{t}.
\end{eqnarray*}
We can assume that $t$ is small enough such that $8\sqrt{t}\le r$, then we have
\begin{eqnarray*}
  \bz_i\in B_{\bx_i}^{2r}.
\end{eqnarray*}
After changing of variable, we obtain
\begin{eqnarray}
&&  \int_{B_{\bx_i}^{r}}\int_{\M_{\by}^t}R_t(\bx,\by)\left|D^{2,3}u(\Phi_i(\al+s_3s_2s_1 \xi))\right|^2\mathd \mu_\bx\mathd\mu_\by\nonumber\\
&\le & \frac{C}{\delta_0} \int_{B_{\bx_i}^{r}}\int_{B_{\bx_i}^{2r}}\frac{1}{s^k}R\left(\frac{|\bz_i-\by|^2}{128s^2t}\right)
\left|D^{2,3}u(\bz_i)\right|^2\mathd \mu_{\bz_i}\mathd\mu_\by\nonumber\\
&=&\frac{C}{\delta_0} \int_{B_{\bx_i}^{r}}\frac{1}{s^k}R\left(\frac{|\bz_i-\by|^2}{128s^2t}\right)\mathd\mu_\by
\int_{B_{\bx_i}^{2r}}\left|D^{2,3}u(\bz_i)\right|^2\mathd \mu_{\bz_i}\nonumber\\
&\le & C \int_{B_{\bx_i}^{2r}}\left|D^{2,3}u(\bx)\right|^2\mathd \mu_{\bx}
\end{eqnarray}
This estimate would give us that
\begin{eqnarray}
\label{est:r1}
  \|r_1(\bx)\|_{L^2(\M)}\le Ct^{1/2}\|u\|_{H^3(\M)}
\end{eqnarray}
Now, we turn to estimate the gradient of $r_1$.
\begin{eqnarray*}
  \int_\M |\nabla_\bx r_1(\bx)|^2\mathd\mu_\bx&\le &  C\int_\M \left|\int_\M \nabla_\bx R_t(\bx,\by)d(\bx,\by)\mathd \mu_\by\right|^2\mathd\mu_\bx
+C\int_\M \left|\int_\M  R_t(\bx,\by)\nabla_\bx d(\bx,\by)\mathd \mu_\by\right|^2\mathd\mu_\bx.
\end{eqnarray*}
Using the same techniques in the calculation of $\|r_1(\bx)\|_{L^2(\M)}$, we can get that
the first term of right hand side can bounded as follows
\begin{eqnarray}
  \int_\M \left|\int_\M \nabla_\bx R_t(\bx,\by)d(\bx,\by)\mathd \mu_\by\right|^2\mathd\mu_\bx\le C  \|u\|_{H^3(\M)}^2.
\end{eqnarray}
The estimation of second term is a little involved. First, we have
\begin{eqnarray}
&&\int_\M \left|\int_\M  R_t(\bx,\by)\nabla_\bx d(\bx,\by)\mathd \mu_\by\right|^2\mathd\mu_\bx\nonumber\\
&\le & \int_\M \left(\int_\M R_t(\bx,\by)\mathd \mu_\by\right)\left(\int_\M R_t(\bx,\by)|\nabla_\bx d(\bx,\by)|^2\mathd \mu_\by\right)\mathd\mu_\bx\nonumber\\
&\le & C\int_\M \int_\M R_t(\bx,\by)|\nabla_\bx d(\bx,\by)|^2\mathd \mu_\by\mathd\mu_\bx\nonumber\\
&\le& C\sum_{i=1}^N \int_\M\int_{B_{\bx_i}^r}R_t(\bx,\by)|\nabla_\bx d(\bx,\by)|^2\mathd \mu_\by\mathd\mu_\bx\nonumber\\
&=&C\sum_{i=1}^N \int_{B_{\bx_i}^{2r}}\int_{B_{\bx_i}^r}R_t(\bx,\by)|\nabla_\bx d(\bx,\by)|^2\mathd \mu_\by\mathd\mu_\bx\nonumber\\
&=&C\sum_{i=1}^N \int_{B_{\bx_i}^{r}}\left(\int_{\M_{\by}^t}R_t(\bx,\by)|\nabla_\bx d(\bx,\by)|^2\mathd \mu_\bx\right)\mathd\mu_\by.
\end{eqnarray}
Using Newton-Leibniz formula, we have
\begin{eqnarray}
  d(\bx,\by)
&=&\xi^{i}\xi^{i'}\int_0^1\int_0^1s_1\left(\p_{i}\Phi^j(\al+ s_1\xi)\p_{i'}\Phi^{j'}(\al+s_2 s_1\xi)\nabla^{j'}\nabla^ju(\Phi(\al+s_2s_1 \xi))\right)
\mathd s_2\mathd s_1\nonumber\\
&&-\xi^{i}\xi^{i'}\int_0^1\int_0^1s_1\left(\p_{i}\Phi^j(\al)\p_{i'}\Phi^{j'}(\al)\nabla^{j'}\nabla^ju(\Phi(\al))\right)
\mathd s_2\mathd s_1
\end{eqnarray}
Then the gradient of $d(\bx,\by)$ has following representation,
\begin{eqnarray}
\nabla_\bx  d(\bx,\by)
&=&\xi^{i}\xi^{i'}\nabla_\bx\left(\int_0^1\int_0^1s_1\left(\p_{i}\Phi^j(\al+ s_1\xi)\p_{i'}\Phi^{j'}(\al+s_2 s_1\xi)\nabla^{j'}\nabla^ju(\Phi(\al+s_2s_1 \xi))\right)
\mathd s_2\mathd s_1\right)\nonumber\\
&&\hspace{-20mm}+\nabla_\bx\left(\xi^{i}\xi^{i'}\right)\int_0^1\int_0^1\int_0^1s_1\frac{d}{d s_3}\left(\p_{i}\Phi^j(\al+ s_3s_1\xi)\p_{i'}\Phi^{j'}(\al+s_3s_2 s_1\xi)
\nabla^{j'}\nabla^ju(\Phi(\al+s_3s_2s_1 \xi))\right)
\mathd s_3\mathd s_2\mathd s_1\nonumber\\
&=&d_1(\bx,\by)+d_2(\bx,\by).
\end{eqnarray}
For $d_1$, we have
\begin{eqnarray}
&&  \int_{B_{\bx_i}^{r}}\left(\int_{\M_{\by}^t}R_t(\bx,\by)|d_1(\bx,\by)|^2\mathd \mu_\bx\right)\mathd\mu_\by\nonumber\\
&\le& Ct^2\int_0^1\int_0^1 \int_{B_{\bx_i}^{r}}\left(\int_{\M_{\by}^t}R_t(\bx,\by)|D^{2,3}u(\Phi(\al+s_2s_1 \xi))|^2\mathd \mu_\bx\right)\mathd\mu_\by \mathd s_2\mathd s_1\nonumber\\
&\le& Ct^2\max_{0\le s\le 1} \int_{B_{\bx_i}^{r}}\left(\int_{\M_{\by}^t}R_t(\bx,\by)|D^{2,3}u(\Phi(\al+s \xi))|^2\mathd \mu_\bx\right)\mathd\mu_\by
\end{eqnarray}
which means that
\begin{eqnarray}
\label{eqn:est-dr1-d1}
   \int_{B_{\bx_i}^{r}}\left(\int_{\M_{\by}^t}R_t(\bx,\by)|d_1(\bx,\by)|^2\mathd \mu_\bx\right)\mathd\mu_\by\le C  \int_{B_{\bx_i}^{2r}}|D^{2,3}u(\bx)|^2\mathd \mu_\bx
\end{eqnarray}
For $d_2$, we have
\begin{eqnarray}
&&  d_2(\bx,\by)\nonumber\\
  &=&\nabla_\bx\left(\xi^{i}\xi^{i'}\right)\int_{[0,1]^3}s_1\frac{d}{d s_3}\left(\p_{i}\Phi^j(\al+ s_3s_1\xi)\p_{i'}\Phi^{j'}(\al+s_3s_2 s_1\xi)
\nabla^{j'}\nabla^ju(\Phi(\al+s_3s_2s_1 \xi))\right)
\mathd s_3\mathd s_2\mathd s_1\nonumber\\
&=&\nabla_\bx\left(\xi^{i}\xi^{i'}\right)\xi^{i''}\int_{[0,1]^3}s_1^2s_2\p_{i}\Phi^j(\al+s_3 s_1\xi)\p_{i''}\p_{i'}\Phi^{j'}(\al+s_3s_2 s_1\xi)\nabla^{j'}\nabla^ju(\Phi(\al+s_3s_2s_1 \xi))
\mathd s_3\mathd s_2\mathd s_1\nonumber\\
&&+\nabla_\bx\left(\xi^{i}\xi^{i'}\right)\xi^{i''}\int_{[0,1]^3}s_1^2\p_{i''}\p_{i}\Phi^j(\al+s_3 s_1\xi)\p_{i'}\Phi^{j'}(\al+s_3s_2 s_1\xi)\nabla^{j'}\nabla^ju(\Phi(\al+s_3s_2s_1 \xi))
\mathd s_3\mathd s_2\mathd s_1\nonumber\\
&&+\nabla_\bx\left(\xi^{i}\xi^{i'}\right)\xi^{i''}\int_{[0,1]^3}s_1^2s_2\p_{i}\Phi^j(\al+s_2 s_1\xi)\p_{i'}\Phi^{j'}(\al+s_3s_2 s_1\xi)\p_{i''}\Phi^{j''}(\al+s_3s_2 s_1\xi)\nonumber\\
&&\hspace{4cm}\nabla^{j''}\nabla^{j'}\nabla^ju(\Phi(\al+s_3s_2s_1 \xi))\mathd s_3\mathd s_2\mathd s_1\nonumber
\end{eqnarray}
This formula tells us that
\begin{eqnarray}
&&  \int_{B_{\bx_i}^{r}}\left(\int_{\M_{\by}^t}R_t(\bx,\by)|d_2(\bx,\by)|^2\mathd \mu_\bx\right)\mathd\mu_\by\nonumber\\
&\le& Ct^2\int_0^1\int_0^1\int_0^1 \int_{B_{\bx_i}^{r}}\left(\int_{\M_{\by}^t}R_t(\bx,\by)|D^{2,3}u(\Phi(\al+s_2s_1 \xi))|^2\mathd \mu_\bx\right)\mathd\mu_\by \mathd s_3\mathd s_2\mathd s_1\nonumber\\
&\le& Ct^2\max_{0\le s\le 1} \int_{B_{\bx_i}^{r}}\left(\int_{\M_{\by}^t}R_t(\bx,\by)|D^{2,3}u(\Phi(\al+s \xi))|^2\mathd \mu_\bx\right)\mathd\mu_\by
\end{eqnarray}
Using the same arguments as that in the calculation of $\|r_1\|_{L^2(\M)}$, we have
\begin{eqnarray}
\label{eqn:est-dr1-d2}
   \int_{B_{\bx_i}^{r}}\left(\int_{\M_{\by}^t}R_t(\bx,\by)|d_2(\bx,\by)|^2\mathd \mu_\bx\right)\mathd\mu_\by\le C  \int_{B_{\bx_i}^{2r}}|D^3u(\bx)|^2\mathd \mu_\bx
\end{eqnarray}
Combining \eqref{eqn:est-dr1-d1} and \eqref{eqn:est-dr1-d2}, we have
\begin{eqnarray}
\label{est:dr1}
  \|\nabla r_1(\bx)\|_{L^2(\M)}\le C\|u\|_{H^3(\M)}
\end{eqnarray}
The estimates of $r_2$, $r_3$ and $r_4$ are similar as those in our previous paper \cite{SS14}. In order to
make this proof self-consistent, we also give a complete proof of this part.

For $r_2$, first, notice that
\begin{eqnarray}
  \nabla^j\bar{R}_t(\bx,\by)&=&\frac{1}{2t}\p_{m'}\Phi^j(\al) g^{m'n'}\p_{n'}\Phi^i(\al) (x^i-y^i)\hk,\nonumber\\
\frac{\eta^j}{2t}R_t(\bx,\by)&=&\frac{1}{2t}\p_{m'}\Phi^j(\al) g^{m'n'}\p_{n'}\Phi^i(\al) \xi^{i'}\p_{i'}\Phi^i\hk.\nonumber
\end{eqnarray}
Then, we have
\begin{eqnarray}
  &&\nabla^j\bar{R}_t(\bx,\by)-\frac{\eta^j}{2t}R_t(\bx,\by)\nonumber\\
&=&\frac{1}{2t}\p_{m'}\Phi^i g^{m'n'}\p_{n'}\Phi^j (x^j-y^j)-\eta^j\hk\nonumber\\
&=&\frac{1}{2t}\p_{m'}\Phi^i g^{m'n'}\p_{n'}\Phi^j \left(x^j-y^j-\xi^{i'}\p_{i'}\Phi^j\right)\hk\nonumber\\
&=&\frac{1}{2t}\xi^{i'}\xi^{j'}\p_{m'}\Phi^i g^{m'n'}\p_{n'}\Phi^j \left(\int_0^1\int_0^1s\p_{j'}\p_{i'}\Phi^j(\al+\tau s \xi)\mathd \tau\mathd s\right)\hk\nonumber
\end{eqnarray}
Thus, we get
\begin{eqnarray}
 \left|\nabla^j\bar{R}_t(\bx,\by)-\frac{\eta^j}{2t}R_t(\bx,\by)\right|\le \frac{C|\xi|^2}{t}\hk\nonumber\\
\left|\nabla_\bx\left(\nabla^j\bar{R}_t(\bx,\by)-\frac{\eta^j}{2t}R_t(\bx,\by)\right)\right|\le \frac{C|\xi|}{t}\hk+\frac{C|\xi|^3}{t^2}|R'_t(\bx,\by)|\nonumber
\end{eqnarray}
Then, we have following bound for $r_2$,
\begin{eqnarray}
\label{est:r2}
\int_{\M}|r_2(\bx)|^2\mathd \mu_\bx&\le& Ct\int_\M  \left(\int_\M \hk |D^2u(\by)|\mathd \mu_\by\right)^2\mathd\mu_\bx \nonumber\\
&\le &  Ct\int_\M  \left(\int_\M \hk \mathd\mu_\by\right)\int_\M \hk |D^2u(\by)|^2\mathd \mu_\by\mathd\mu_\bx \nonumber\\
&\le& Ct  \max_{\by}\left(\int_\M \hk\mathd \mu_\bx\right) \int_\M|D^2u(\by)|^2\mathd \mu_\by \nonumber\\
&\le& Ct\|u\|_{H^2(\M)}^2.
\end{eqnarray}
Similarly, we have
\begin{eqnarray}
 \int_M |\nabla r_2(\bx)|^2\mathd\mu_\bx
&\le &  Ct\int_\M  \left(\int_\M\nabla_\bx \hk \mathd\mu_\by\right)\int_\M \nabla_\bx\hk
|D^2u(\by)|^2\mathd \mu_\by\mathd\mu_\bx \nonumber\\
 &\le& C\sqrt{t}  \max_{\by}\left(\int_\M \nabla_\bx\hk\mathd \mu_\bx\right)
\max_{\bx}\left(\int_\M \nabla_\bx\hk\mathd \mu_\by\right) \int_\M|D^2u(\by)|^2\mathd \mu_\by\nonumber\\
&\le&  C\|u\|_{H^2(\M)}^2.
\label{est:dr2}
\end{eqnarray}
$r_3$ is relatively easy to estimate by using the well known Gauss formula.
\begin{eqnarray}
\label{est:r3}
  r_3(\bx)=\int_{\p\mathcal{M}}n^{j}\eta^{i}(\nabla^{i} \nabla^{j}u) \rhk \mathd\tau_\by
\end{eqnarray}
Now, we turn to bound the last term $r_4$. Notice that
\begin{eqnarray}
  \nabla^j\left(\nabla^{j}u\right)&=&(\p_{k'} \Phi^j)g^{k'l'}\p_{l'}\left((\p_{m'}\Phi^j)g^{m'n'}(\p_{n'} u )\right)\nonumber\\
&=&(\p_{k'}\Phi^j)g^{k'l'}\left(\p_{l'}(\p_{m'}\Phi^j)\right)g^{m'n'}(\p_{n'}u)+(\p_{k'}\Phi^j)g^{k'l'}(\p_{m'}\Phi^j)\p_{l'}\left(g^{m'n'}(\p_{n'}u)\right)\nonumber\\
&=&\frac{1}{\sqrt{g}}(\p_{m'}\sqrt{g}) g^{m'n'}(\p_{n'}u)+\p_{m'}\left(g^{m'n'}(\p_{n'}u)\right)\nonumber\\
&=&\frac{1}{\sqrt{g}}\p_{m'}\left(\sqrt{g} g^{m'n'}(\p_{n'}u)\right) \nonumber\\
&=&\Delta_\M u.
\label{eqn:grad2Delta}
\end{eqnarray}
Here we use the fact that
\begin{eqnarray}
  (\p_{k'}\Phi^j)g^{k'l'}\left(\p_{l'}(\p_{m'}\Phi^j)\right)&=&(\p_{k'}\Phi^j)g^{k'l'}\left(\p_{m'}(\p_{l'}\Phi^j)\right)\nonumber\\
&=&(\p_{m'}(\p_{k'}\Phi^j))g^{k'l'}(\p_{l'}\Phi^j)\nonumber\\
&=&\frac{1}{2}g^{k'l'}\p_{m'}(g_{k'l'})\nonumber \\
&=&\frac{1}{\sqrt{g}}(\p_{m'}\sqrt{g})
\end{eqnarray}
Moreover, we have
\begin{eqnarray}
&& g^{i'j'}(\p_{j'}\Phi^{j})(\p_{i'}\xi^{l})(\p_{l}\Phi^{i})(\nabla^{i}\nabla^{j}u)\nonumber\\
&=&- g^{i'j'}(\p_{j'}\Phi^{j})(\p_{i'}\Phi^{i})(\nabla^{i}\nabla^{j}u)\nonumber\\
&=&- g^{i'j'}(\p_{j'}\Phi^{j})(\p_{i'}\Phi^{i})(\p_{m'}\Phi^{i})g^{m'n'}\p_{n'}\left(\nabla^{j}u \right)\nonumber\\
&=&- g^{i'j'}(\p_{j'}\Phi^{j})\p_{i'}\left(\nabla^{j}u\right)\nonumber\\
&=&- \nabla^{j}\left(\nabla^{j}u\right) .
\label{eqn:div}
\end{eqnarray}
where the first equalities are due to that
$\p_{i'}\xi^l = -\delta_{i'}^l$.
Then we have
\begin{eqnarray}
  &&\mbox{div} \; \left(\eta^i(\nabla^i\nabla^j u(\by))\right)+\Delta_\M u\nonumber\\
&=&\frac{1}{\sqrt{|g|}}\p_{i'}\left(\sqrt{|g|}g^{i'j'}(\p_{j'}\Phi^j) \xi^{l}(\p_l \Phi^i)(\nabla^i\nabla^j u(\by))\right)
-g^{i'j'}(\p_{j'}\Phi^{j})(\p_{i'}\xi^{l})(\p_{l}\Phi^{i})(\nabla^{i}\nabla^{j}u)\nonumber\\
&=&\frac{\xi^l}{\sqrt{|g|}}\p_{i'}\left(\sqrt{|g|}g^{i'j'}(\p_{j'}\Phi^j) (\p_l \Phi^i)(\nabla^i\nabla^j u(\by))\right)
\end{eqnarray}
Here we use the equalities \eqref{eqn:grad2Delta}, \eqref{eqn:div}, $\eta^i = \xi^{l} \p_{i'}\Phi^l$ and the definition of $\mbox{div}$,
\begin{equation}
\mbox{div}X = \frac{1}{\sqrt{|g|}}\p_{i'} (\sqrt{|g|}g^{i'j'}\p_{j'}\Phi^k X^k).
\label{eqn:divergence}
\end{equation}
where $X$ is a smooth tangent vector
field on $\M$ and $(X^1,\dots,X^d)^t$ is its representation in embedding coordinates.

Hence,
\begin{eqnarray}
  r_4(\bx)=\int_\mathcal{M}\frac{\xi^l}{\sqrt{|g|}}\p_{i'}\left(\sqrt{|g|}g^{i'j'}(\p_{j'}\Phi^j) (\p_l \Phi^i)(\nabla^i\nabla^j u(\by))\right) \rhk \mathd \mu_\by\nonumber
\end{eqnarray}
Then it is easy to get that
\begin{eqnarray}
  \label{est:r4}
  \|r_4(\bx)\|_{L^2(\M)}&\le& C t^{1/2}\|u\|_{H^3(\M)},\\
\|\nabla r_4(\bx)\|_{L^2(\M)}&\le & C\|u\|_{H^3(\M)}.
\label{est:dr4}
\end{eqnarray}
The proof is complete by combining \eqref{est:r1},\eqref{est:dr1},\eqref{est:r2},\eqref{est:dr2},\eqref{est:r3},\eqref{est:r4},\eqref{est:dr4}.
\end{proof}


\section{Proof of Theorem \ref{thm:converge_c1}}
\label{sec:converge-c1}
First, we need the following theorem which have been proved in \cite{SS14}.
\begin{theorem}
Assume $\M$ and $\p\M$ are $C^\infty$ and the input data $(P,\bV)$ is an $h$-integral approximation of $\M$.
Let
$f\in C(\M)$
in both problems, then there exists constants
$C, T_0$ depending only on $\M$ and  $\p \M$ so that
\begin{eqnarray}
\|T_{t,h}f-T_t f\|_{H^1(\M)} &\leq& \frac{Ch}{t^{3/2}}\|f\|_{\infty},
\end{eqnarray}
as long as $t\le T_0$ and $\frac{h}{\sqrt{t}}\le T_0$.
\label{thm:dis_error}
\end{theorem}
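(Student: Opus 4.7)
The strategy is to derive a source equation $L_t e = r$ for the error $e := T_{t,h}f - T_t f$ and then invoke the $H^1$ stability of $L_t$ from Theorem~\ref{thm:regularity}. Writing $u_h = T_{t,h}f$ and $u_t = T_t f$, I first apply the continuous operator $L_t$ to the explicit formula defining $u_h$ on $\M$. Using that $u_h(\bx_i) = u_i$ for $\bx_i \in P$, so that $\sum_j R_t(\bx,\bx_j) u_j V_j = \sum_j R_t(\bx,\bx_j) u_h(\bx_j) V_j$, a direct computation yields
\begin{equation*}
L_t u_h(\bx) = \frac{u_h(\bx)}{t}\mathcal{E}_3(\bx) - \frac{1}{t}\mathcal{E}_1(\bx) - \sum_{j}\bar R_t(\bx,\bx_j) f(\bx_j) V_j,
\end{equation*}
where $\mathcal{E}_1(\bx) = \int_\M R_t(\bx,\by) u_h(\by)\,\mathd\mu_\by - \sum_j R_t(\bx,\bx_j) u_h(\bx_j) V_j$ and $\mathcal{E}_3(\bx) = w_t(\bx) - w_{t,h}(\bx)$. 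Comparing with $L_t u_t(\bx) = -\int_\M \bar R_t(\bx,\by)f(\by)\,\mathd\mu_\by$ from~\eqref{eqn:integral}, the error splits into three quadrature defects,
\begin{equation*}
r(\bx) := L_t e(\bx) = \frac{u_h(\bx)}{t}\mathcal{E}_3(\bx) - \frac{1}{t}\mathcal{E}_1(\bx) + \mathcal{E}_2(\bx),
\end{equation*}
with $\mathcal{E}_2(\bx) = \int_\M \bar R_t(\bx,\by) f(\by)\,\mathd\mu_\by - \sum_j \bar R_t(\bx,\bx_j) f(\bx_j) V_j$.

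Next I bound each defect via the $h$-integral approximation, applied to integrands whose supports lie in geodesic balls of radius $2\sqrt{t}$ around $\bx$ and hence have volume $O(t^{k/2})$. For a smooth $g$ supported there, the hypothesis gives $|\int_\M g - \sum g(\bx_j) V_j|\leq Ch\,t^{k/2}\,\|g\|_{C^1}$. I plug in the kernel scalings $\|R_t(\bx,\cdot)\|_\infty, \|\bar R_t(\bx,\cdot)\|_\infty = O(t^{-k/2})$ and $\|\nabla_\by R_t(\bx,\cdot)\|_\infty, \|\nabla_\by \bar R_t(\bx,\cdot)\|_\infty = O(t^{-k/2-1/2})$, together with the $L^\infty$ and $C^1$ bounds on $u_h$ from Theorem~\ref{thm:bound}, to obtain pointwise estimates on $\mathcal{E}_1,\mathcal{E}_2,\mathcal{E}_3$. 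For $\nabla_\bx r$, the $\bx$-derivative falls on the kernel and costs one additional factor $t^{-1/2}$. Integrating the pointwise bounds over the compact $\M$ gives $L^2$ estimates on $r$ and $\nabla r$.

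Finally, since $e$ has mean zero by construction, Theorem~\ref{thm:regularity} applies to $L_t e = r - \bar r$ (the mean $\bar r$ contributes a uniformly controlled constant) and yields $\|e\|_{H^1(\M)} \leq C(\|r\|_{L^2} + t\|\nabla r\|_{L^2})$. Plugging in the bounds from the previous step, the dominant contribution comes from the term $\mathcal{E}_1/t$; after the support-volume factor cancels the kernel sup-norm, the exponent collapses to $t^{-3/2}$, giving the stated bound $\|e\|_{H^1(\M)} \leq Cht^{-3/2}\|f\|_\infty$.

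The main obstacle is bookkeeping of the $t$-powers. In $\mathcal{E}_1/t$ the integrand $R_t(\bx,\cdot) u_h$ has its $C^1$ norm inflated both by the kernel gradient and by $\|u_h\|_{C^1}\leq Ct^{-(k+2)/4}\|f\|_\infty$ from Theorem~\ref{thm:bound}; only by carefully tracking the cancellation of the support volume $t^{k/2}$ against the kernel sup-norm $t^{-k/2}$ does one reach the claimed exponent. A further subtlety is that $f$ is only assumed continuous, so in bounding $\|\bar R_t(\bx,\cdot)f\|_{C^1}$ all derivatives must fall on the smooth factor $\bar R_t$ while $\|f\|_\infty$ absorbs $f$; this is consistent with the $\|f\|_\infty$-dependence on the right-hand side and must be enforced when invoking the $h$-integral approximation for the $\mathcal{E}_2$ defect.
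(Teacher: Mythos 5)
A preliminary remark: the paper does not actually prove Theorem~\ref{thm:dis_error}; it imports it from \cite{SS14}. So there is no in-paper argument to compare against line by line. Your architecture --- derive the error equation $L_t e = r$ with $r$ a sum of quadrature defects $\mathcal{E}_1,\mathcal{E}_2,\mathcal{E}_3$, then invoke the stability estimate of Theorem~\ref{thm:regularity} --- is the natural one and is structurally what the PIM papers do. Your identity for $L_t u_h$ is also correct (it uses $u_h(\bx_i)=u_i$, which follows from evaluating the defining formula of $T_{t,h}f$ at $\bx_i$ and the discrete system~\eqref{eqn:dis}).

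The gap is in the $t$-bookkeeping, and it is not cosmetic: your method does not produce the exponent $t^{-3/2}$. You bound $\mathcal{E}_1(\bx)$ pointwise by $Ch\,|\mathrm{supp}|\,\|R_t(\bx,\cdot)u_h\|_{C^1}$ and feed in $\|u_h\|_\infty\le Ct^{-k/4}\|f\|_\infty$, $\|u_h\|_{C^1}\le Ct^{-(k+2)/4}\|f\|_\infty$ from Theorem~\ref{thm:bound}. The support volume $t^{k/2}$ cancels only the kernel sup-norm $t^{-k/2}$; the factor $t^{-k/4}$ coming from the sup-norm of $u_h$ survives, so you get $|\mathcal{E}_1(\bx)|/t\le Ch\,t^{-k/4-3/2}\|f\|_\infty$, and likewise $|u_h(\bx)\mathcal{E}_3(\bx)|/t\le Ch\,t^{-k/4-3/2}\|f\|_\infty$. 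Integrating pointwise bounds over $\M$ cannot remove this $t^{-k/4}$, so the route as written proves only $\|e\|_{H^1}\le Ch\,t^{-k/4-3/2}\|f\|_\infty$, which is strictly weaker for $k\ge 1$ and would degrade Theorems~\ref{thm:converge_c1} and~\ref{thm:eigen_neumann}. The asserted ``collapse to $t^{-3/2}$'' requires estimating the defects directly in $L^2(\M)$ rather than in $L^\infty$: using $\bigl(\sum_j u_j^2V_j\bigr)^{1/2}\le C\|f\|_\infty$ (Theorem~\ref{thm:elliptic_L}) together with a Schur-test/Young-type bound for kernels of width $\sqrt{t}$ (Lemma~\ref{cor:upper_bound_int_R_t}), the averaging in $\bx$ recovers an extra factor $t^{k/4}$, giving $\|\mathcal{E}_1\|_{L^2}\le Ch\,t^{-1/2}\|f\|_\infty$ and $\|u_h\mathcal{E}_3\|_{L^2}\le Ch\,t^{-1/2}\|f\|_\infty$, hence the stated rate. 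A secondary problem: for $\mathcal{E}_2$ you apply the $h$-integral approximation to $\bar{R}_t(\bx,\cdot)f$ with $f$ merely continuous; the hypothesis requires a genuine $C^1$ integrand, and ``letting the derivative fall on the smooth factor'' is not a license one has --- this needs either a $C^1$ assumption on $f$ or a separate approximation argument.
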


Then the main idea to prove Theorem \ref{thm:converge_c1} is to lift the covergence from $H^1$ to $C^1$ by using the fact
that $T_tu$ and $T_{t,h}u$ have higher order regularity for any $u\in C(\M)$. The details are given as following.
\begin{proof} {\it of Theorem \ref{thm:converge_c1}}:

First, for any $u\in C^1(\M)$, we know that $T_tu$ and $T_{t,h}u$ have following representations
  \begin{eqnarray}
    T_tu&=&\frac{1}{w_t(\bx)}\int_{\M}R_t(\bx,\by)T_tu(\by)\mathd\by+\frac{t}{w_t(\bx)}\int_\M\bar{R}(\bx,\by)u(\by)\mathd\by,\\
T_{t,h}u&=&\frac{1}{w_{t,h}(\bx)}\sum_{i}R_t(\bx,\bx_i)T_{t,h}u(\bx_i)V_i+\frac{t}{w_{t,h}(\bx)}\sum_{i}\bar{R}(\bx,\bx_i)u(\bx_i)V_i.
  \end{eqnarray}
Denote
  \begin{eqnarray}
    T^1_tu&=&\frac{1}{w_{t,h}(\bx)}\int_{\M}R_t(\bx,\by)T_tu(\by)\mathd\by+\frac{t}{w_{t,h}(\bx)}\int_\M\bar{R}(\bx,\by)u(\by)\mathd\by,\\
    T^2_tu&=&\frac{1}{w_{t,h}(\bx)}\int_{\M}R_t(\bx,\by)T_{t,h}u(\by)\mathd\by+\frac{t}{w_{t,h}(\bx)}\int_\M\bar{R}(\bx,\by)u(\by)\mathd\by,\\
  \end{eqnarray}
Direct calculation would give that
  \begin{eqnarray}
    \|w_t(\bx)-w_{t,h}(\bx)\|_\infty\le \frac{Ch}{t^{1/2}},\quad \|\nabla w_t(\bx)-\nabla w_{t,h}(\bx)\|_\infty\le \frac{Ch}{t}
  \end{eqnarray}
and
  \begin{eqnarray}
    \left|\int_{\M}R_t(\bx,\by)T_tu(\by)\mathd\by\right|&\le& C t^{-k/4}\|T_tu\|_{L^2}\le Ct^{-k/4}\|u\|_{H^1}\le Ct^{-k/4}\|u\|_{C^1},\nonumber\\
\left|\nabla_\bx\int_{\M}R_t(\bx,\by)T_tu(\by)\mathd\by\right|&\le& C t^{-(k+2)/4}\|T_tu\|_{L^2}\le Ct^{-(k+2)/4}\|u\|_{C^1}
  \end{eqnarray}
and
  \begin{eqnarray}
    \left|\int_\M\bar{R}(\bx,\by)u(\by)\mathd\by\right|\le C\|u\|_{\infty},\quad  \left|\nabla_\bx \int_\M\bar{R}(\bx,\by)u(\by)\mathd\by\right|
\le Ct^{-1/2}\|u\|_{\infty}
  \end{eqnarray}
Using above inequalites, we have
  \begin{eqnarray}
    \left|T_tu - T^1_tu\right|&\le&  \frac{Ch}{t^{(k+2)/4}}\|u\|_\infty,\\
    \left|\nabla (T_tu - T^1_tu)\right|&\le&  \frac{Ch}{t^{k/4+1}}\|u\|_\infty,
  \end{eqnarray}
which proves that
  \begin{eqnarray}
\label{eqn:est-t1}
        \left\|T_tu - T^1_tu\right\|_{C^1}&\le&  \frac{Ch}{t^{k/4+1}}\|u\|_\infty.
  \end{eqnarray}
Secondly, we have
  \begin{eqnarray}
    \left| T^1_tu- T^2_tu\right|&=&\left|\frac{1}{w_{t,h}(\bx)}\int_{\M}R_t(\bx,\by)\left(T_tu(\by)-T_{t,h}u(\by)\right)\mathd\by\right|\nonumber\\
&\le & Ct^{-k/4}\left\|T_tu-T_{t,h}u\right\|_{L^2}\le \frac{Ch}{t^{k/4+3/2}}\|u\|_\infty.
  \end{eqnarray}
and
 \begin{eqnarray}
    \left| \nabla \left(T^1_tu- T^2_tu\right)\right|&=&\left|\nabla_\bx \left(\frac{1}{w_{t,h}(\bx)}\int_{\M}R_t(\bx,\by)\left(T_tu(\by)-T_{t,h}u(\by)\right)\mathd\by\right)\right|\nonumber\\
&\le & Ct^{-k/4+1/2}\left\|T_tu-T_{t,h}u\right\|_{L^2}\le \frac{Ch}{t^{k/4+2}}\|u\|_\infty.
  \end{eqnarray}
This implies that
  \begin{eqnarray}
\label{eqn:est-t12}
     \left\|T^1_tu - T^2_tu\right\|_{C^1}&\le&  \frac{Ch}{t^{k/4+2}}\|u\|_\infty
  \end{eqnarray}
Using Theorem \ref{thm:bound}, we have
  \begin{eqnarray}
    \left|T_{t,h}u\right|&\le&  Ct^{-k/4}\|u\|_{\infty},\\
\left|\nabla T_{t,h}u\right|&\le&  Ct^{-(k+2)/4}\|u\|_{\infty}.
  \end{eqnarray}
Thus,
  \begin{eqnarray}
    \left|T_{t,h}u-T^2_tu\right|&\le& \frac{Ch}{t^{(k+2)/4}}\|u\|_{\infty},
 \\
    \left|\nabla\left(T_{t,h}u-T^2_tu\right)\right|&\le& \frac{Ch}{t^{k/4+1}}\|u\|_{\infty}.
  \end{eqnarray}
which also reads
  \begin{eqnarray}
\label{eqn:est-t2}
    \left\|T_{t,h}u-T^2_tu\right\|_{C^1}\le \frac{Ch}{t^{k/4+1}}\|u\|_{\infty}
  \end{eqnarray}
The proof is complete by combining \eqref{eqn:est-t1}, \eqref{eqn:est-t12} and \eqref{eqn:est-t2}.
\end{proof}

\section{Proof of Theorem \ref{thm:bound}}
\label{sec:bound}
In this section, we prove the bounds on the solution operators $T_t$ and $T_{t, h}$.
First, we need following theorem whose proof can be found in \cite{SS14}.
\begin{theorem}
If both the submanifolds $\M$ and $\p\M$ are $C^\infty$ smooth and the input data $(P, S, \bV, \bA)$ is an $h$-integral
approximation of $(\M, \p\M)$,  then there exist constants $C>0,\, C_0>0$ independent on $t$ so that for 
any ${\bf u} = (u_1, \cdots, u_{|P|})^t$ with $\sum_{i=1}^{|P|} u_iV_i = 0$ and for any sufficient small $t$ 
and $\frac{h}{\sqrt{t}}$ 
\begin{equation}
\left<{\bf u}, \mathcal{L} {\bf u} \right>_{\bV} \geq C(1-\frac{C_0h}{\sqrt{t}}) \left<{\bf u}, {\bf u}\right>_{\bV},\nonumber
\end{equation}
where $\left<{\bf u}, {\bf v}\right>_{\bV} = \sum_{i=1}^{|P|} u_iv_iV_i$ for any ${\bf u} = (u_1, \cdots, u_{|P|})^t, 
{\bf v} =(v_1, \cdots, v_{|P|})^t$. 
\label{thm:elliptic_L}
\end{theorem}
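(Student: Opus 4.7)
The plan is to derive the discrete coercivity directly from its continuous counterpart, Lemma~\ref{lem:elliptic_L_t}, by extending $\mathbf{u}$ to a piecewise-constant function on $\M$ adapted to the quadrature $(P, \mathbf{V})$, applying the continuous coercivity to the extension, and then carefully bookkeeping the error between the discrete and continuous Dirichlet forms.

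First I would construct a partition $\{\Omega_i\}_{i=1}^{|P|}$ of $\M$ with $\bfp_i \in \Omega_i$, $|\Omega_i| = V_i + O(hV_i)$, and $\mathrm{diam}(\Omega_i) \le Ch$. Such a partition exists because the $h$-integral approximation property implies $P$ is an $O(h)$-net in $\M$ and the $V_i$ approximate local volumes. Define the extension $u \in L^2(\M)$ by $u(\bx) = u_i$ for $\bx \in \Omega_i$. Then up to an error absorbable into $O(h/\sqrt{t})$ one has $\int_\M u\,d\mu_\bx = 0$ and $\|u\|_{L^2(\M)}^2 = \langle \mathbf{u}, \mathbf{u}\rangle_\bV$, so Lemma~\ref{lem:elliptic_L_t} yields $\langle u, L_t u\rangle_\M \ge C\|u\|_{L^2(\M)}^2 \ge C\langle \mathbf{u}, \mathbf{u}\rangle_\bV$.

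Next I would symmetrize and compare the two Dirichlet forms:
\begin{align*}
\langle \mathbf{u}, \mathcal{L}\mathbf{u}\rangle_\bV &= \frac{C_t}{2t}\sum_{i,j} R\!\left(\tfrac{|\bfp_i - \bfp_j|^2}{4t}\right)(u_i-u_j)^2 V_i V_j,\\
\langle u, L_t u\rangle_\M &= \frac{C_t}{2t}\sum_{i,j}(u_i - u_j)^2 \int_{\Omega_i}\!\!\int_{\Omega_j} R\!\left(\tfrac{|\bx - \by|^2}{4t}\right) d\mu_\by d\mu_\bx.
\end{align*}
For $\bx \in \Omega_i$, $\by \in \Omega_j$, a Taylor expansion in the kernel argument together with $R \in C^2$ compactly supported gives the pairwise oscillation bound
\begin{equation*}
\left| R\!\left(\tfrac{|\bx-\by|^2}{4t}\right) - R\!\left(\tfrac{|\bfp_i - \bfp_j|^2}{4t}\right) \right| \le \frac{Ch}{\sqrt{t}}\, \tilde R\!\left(\tfrac{|\bfp_i - \bfp_j|^2}{8t}\right),
\end{equation*}
where $\tilde R$ is a slightly rescaled dominating kernel of the same qualitative type as $R$. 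Summing over $i, j$, the discrepancy $E := \langle u, L_t u\rangle_\M - \langle\mathbf{u}, \mathcal{L}\mathbf{u}\rangle_\bV$ satisfies $|E| \le (C_0 h/\sqrt{t})\langle \mathbf{u}, \tilde{\mathcal{L}}\mathbf{u}\rangle_\bV$, and $\tilde{\mathcal{L}}$ can in turn be dominated by $\mathcal{L}$ up to a constant and a $t$-rescaling. Combining with the continuous coercivity,
\begin{equation*}
\left(1 + \tfrac{C_0 h}{\sqrt{t}}\right)\langle \mathbf{u}, \mathcal{L}\mathbf{u}\rangle_\bV \ge C \langle \mathbf{u}, \mathbf{u}\rangle_\bV,
\end{equation*}
which rearranges to the claimed bound for $h/\sqrt{t}$ sufficiently small.

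The main obstacle lies in the pairwise oscillation estimate: a naive bound using $\|\nabla R_t\|_\infty$ costs $h\cdot t^{-(k+1)/2}$, far weaker than the desired $h/\sqrt{t}$. The essential observation is that the natural length scale of the kernel is $\sqrt{t}$, so perturbations of size $h$ in the arguments produce only a \emph{relative} error of order $h/\sqrt{t}$; realizing this requires estimating the difference not by the pointwise gradient but by another compactly supported kernel $\tilde R$ of the same functional type as $R$, after which the error absorbs back into $\langle \mathbf{u}, \mathcal{L}\mathbf{u}\rangle_\bV$ up to a rescaling of $t$. A secondary technicality is guaranteeing the existence of the partition with $|\Omega_i|$ sufficiently close to $V_i$ so that the extension has mean zero up to controllable error; this follows from the $h$-integral approximation applied to suitable indicator-type test functions, or can be bypassed by a mean-zero correction to $u$ of size $O(h)\|u\|_{L^2}$.
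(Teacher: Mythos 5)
Note first that the paper does not prove this theorem itself; it is quoted verbatim from \cite{SS14}. So the comparison below is with the strategy actually used there (and reflected in the related Lemma~\ref{lem:elliptic_v} of this paper), which works with the kernel-smoothed interpolant $v(\bx)=\frac{1}{w_{t,h}(\bx)}\sum_j R_t(\bx,\bfp_j)u_jV_j$ rather than with a piecewise-constant extension.

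Your proposal has two genuine gaps. The first is at the very start: the existence of a partition $\{\Omega_i\}$ with $|\Omega_i|=V_i(1+O(h))$ does not follow from the stated $h$-integral approximation hypothesis. That hypothesis is a global quadrature bound for $C^1$ integrands only; if you try to extract cell-wise volume control by testing against (smoothed) indicators of candidate cells of diameter $O(h)$, the right-hand side $Ch\,|\mathrm{supp}(f)|\,\|f\|_{C^1}\sim Ch\cdot h^k\cdot h^{-1}=Ch^k$ is of the same order as $V_i$ itself, so you learn nothing about individual cells. The weights $V_i$ are abstract quadrature weights and need not be volumes of any partition. Relatedly, a piecewise-constant extension is not $C^1$, so it can never interface with the quadrature hypothesis directly; this is precisely why the argument in \cite{SS14} routes everything through the smooth function $v$, to which both the continuous coercivity (Lemma~\ref{lem:elliptic_v}) and the quadrature estimate can be applied. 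The second gap is the closing absorption step: after bounding the discrepancy $E$ by $(C_0h/\sqrt{t})\langle\mathbf{u},\tilde{\mathcal{L}}\mathbf{u}\rangle_{\bV}$ with $\tilde R$ living at scale $8t$, you assert that $\tilde{\mathcal{L}}$ ``can in turn be dominated by $\mathcal{L}$ up to a constant and a $t$-rescaling.'' This is not a pointwise kernel comparison: the support of $\tilde R(|\bfp_i-\bfp_j|^2/8t)$ strictly contains that of $R(|\bfp_i-\bfp_j|^2/4t)$, so the enlarged form contains pairs $(i,j)$ that contribute nothing to $\langle\mathbf{u},\mathcal{L}\mathbf{u}\rangle_{\bV}$. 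Controlling a graph Dirichlet form at a larger bandwidth by one at a smaller bandwidth is a genuine lemma requiring a chaining argument and sufficient density of $P$; without it, the crude fallback $(u_i-u_j)^2\le 2u_i^2+2u_j^2$ only yields an error of order $h/t^{3/2}\cdot\langle\mathbf{u},\mathbf{u}\rangle_{\bV}$, which is weaker than the stated $h/\sqrt{t}$ rate. The overall inequality chain (continuous coercivity plus form comparison implies discrete coercivity) is sound, but as written both the entry point and the exit point of the argument rest on unproved claims.
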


Now, we have enough materials to prove Theorem~\ref{thm:bound}.
\begin{proof} {\it of Theorem~\ref{thm:bound}}.\\
Let $u = T_t(f)$. From Lemma \ref{lem:elliptic_L_t}, there exists a constant $C$ independent of $t$
so that 
  \begin{eqnarray*}
\label{eq:est_u_l2}
    \|u\|^2_{L^2(\M)} &\le&  C C_t\int_{\M}\left(\int_{\M}R\left(\frac{|\bx-\by|^2}{4t}\right)f(\by)\mathd\mu_\by \right)u(\bx) \mathd\mu_\bx\\ 
	 &\le& C\|u\|_{L^2(\mathcal{M})}\|f\|_{L^2(\mathcal{M})}, 
  \end{eqnarray*}
which means $\|u\|_{L^2(\M)}\leq C\|f\|_{L^2(\mathcal{M})}.$ At the same time, we can write 
\begin{eqnarray}
  u(\bx)=\frac{1}{w_t(\bx)}\int_{\mathcal{M}}R_t(\bx,\by)u(\by)\mathd \mu_\by-\frac{t}{w_t(\bx)}\int_{\mathcal{M}}\bar{R}_t(\bx,\by)f(\by)\mathd \mu_\by. \nonumber
\end{eqnarray}
Since the kernel function, $R$, is chosen to be $C^1$, $u$ is also belong to $C^1$ and 
\begin{eqnarray}
  |u(\bx)|^2 &\leq& C\left(\int_{\mathcal{M}}R_t(\bx,\by)\mathd \mu_\by\right)\left(\int_{\mathcal{M}}R_t(\bx,\by)u^2(\by)\mathd \mu_\by\right) \nonumber\\
&&+Ct^2\left(\int_{\mathcal{M}}\bar{R}_t(\bx,\by)\mathd \mu_\by\right)\left(\int_{\mathcal{M}}\bar{R}_t(\bx,\by)f^2(\by)\mathd \mu_\by\right)   \nonumber \\
  				  &\leq& C C_t \left(\|u\|^2_{L^2(\M)} + t^2\|f\|^2_{L^2(\M)}\right) \leq  CC_t\|f\|^2_{L^2(\M)} \nonumber
\end{eqnarray}
Similarly, we have 
$$\|\nabla u\|_{\infty} \leq \frac{C}{t^{1/2}}(\|u\|_\infty+t\|f\|_\infty) \leq  \frac{CC_t^{1/2}}{t^{1/2}}\|f\|_\infty.$$

Now, we turn to bound $T_{t,h}$. We can write
\begin{eqnarray}
  T_{t,h}(f)(\bx)&=&\frac{1}{w_{t,h}(\bx)}\sum_{\bfp_j\in P}R_t(\bx,\bfp_j)u_jV_j-\frac{t}{w_{t,h}(\bx)}\sum_{\bfp_j\in P}\bar{R}_t(\bx,\bfp_j)f(\bfp_j)V_j,\nonumber
\end{eqnarray}
where $\bfu = (u_1, \cdots, u_{|P|})^t$ with $\sum_{i=1}^{|P|} u_i V_i =0$ solves the problem \eqref{eqn:dis}. 

From Theorem~\ref{thm:elliptic_L}, we can easily get that $\left(\sum_{j=1}^{|P|}u_j^2V_j\right)^{1/2} \leq C\|f\|_\infty$ for some constant $C$ independent on $t$. It is obvious
that for some constant $C$ independent on $t$,  
\begin{equation}
|T_{t,h}(f)(\bx)| \leq C C_t^{1/2} \left(\sum_{j=1}^{|P|}u_j^2V_j\right)^{1/2} + Ct\|f\|_\infty 
\leq CC_t^{1/2} \|f\|_\infty,\nonumber
\end{equation} 
and
\begin{equation}
|\nabla T_{t,h}(f)(\bx)| \leq \frac{C C_t^{1/2}}{t^{1/2}} \left(\sum_{j=1}^{|P|}u_j^2V_j\right)^{1/2} + Ct^{1/2}\|f\|_\infty 
\leq \frac{C C_t^{1/2}}{t^{1/2}}\|f\|_\infty.\nonumber
\end{equation}
\end{proof}


\section{Conclusion and Future Work}
\label{sec:conclusion}

In this paper, we proved that the convergence of the point integral method for the spectra of the Laplace-Beltrami operator on
point cloud. 
And the rate of convergence is also obtained.
This work builds
a solid mathematical foundation for many Laplace spectra based algorithm.

In many applications, the sample points of the manifold are draw according to some probability distribution.
Then one interesting problem is to study the performance of the point integral method on the random samples
as the number of sample points tends to infinity. 
Based on the results reported in this paper
We can show that with overwhelming probability, the spectra given by the point integral method
converge to the spectra of following eigen problem
\begin{equation}
\left\{\begin{array}{rl}
      -\frac{1}{p^2(\bx)}\nabla\cdot(p^2(\bx) \nabla u(\bx))= \lambda u(x) ,&\bx\in  \mathcal{M} \\
      \frac{\p u}{\p \bn}(\bx)=0,& \bx\in \p \mathcal{M},
\end{array}
\right.
\label{eqn:eigen_weight}
\end{equation}
where $p$ is the probability distribution.
The rate of convergence can also be obtained. This result will be reported in our subsequent paper.

\vspace{0.2in}
\noindent
{\bf Acknowledgments.}
This research was partial supported by NSFC Grant (11201257 to Z.S., 11371220 to Z.S. and J.S. and 11271011 to J.S.),
and  National Basic Research Program of China (973 Program 2012CB825500 to J.S.).

\bibliographystyle{abbrv}
\bibliography{poisson}

\end{document}